\renewcommand{\subsectionmark}[1]{} 
\newcommand{\mc}[1]{\mathcal{#1}}
\DeclareMathOperator{\re}{Re}			
\DeclareMathOperator{\Span}{span}
\newcommand*{\C}{{\mathbb{C}}}     
\newcommand*{\R}{{\mathbb{R}}}     
\newcommand*{\N}{{\mathbb{N}}}     
\newcommand*{\Z}{{\mathbb{Z}}}
\newcommand*{\Lin}{{\mathcal{L}}}   
\newcommand*{\Dom}{{\mathcal{D}}}   
\newcommand{\ran}{{\mathcal{R}}}   
\renewcommand{\ker}{{\mathcal{N}}}
\newcommand*{\abs} [1]{\lvert#1\rvert}
\newcommand*{\norm}[1]{\lVert#1\rVert}
\newcommand*{\set} [1]{\{#1\}}
\newcommand*{\iprod}[2]{\langle#1,#2\rangle}    
\newcommand*{\dualpair}[2]{\langle#1,#2\rangle} 
\newcommand*{\Set}[1]{\left\{\,#1\,\right\}}
\newcommand*{\Setm}[2]{\left\{\,#1\,\middle|\,#2\,\right\}}
\newcommand*{\Lp}[1][p]{L^{#1}}
\newcommand*{\lp}[1][p]{\ell^{#1}} 
\newcommand{\pmat}[1]{\begin{pmatrix}#1\end{pmatrix}}
\newcommand{\pmatsmall}[1]{\begin{psmallmatrix}#1\end{psmallmatrix}}
\newcommand*{\Abs}[2][default]{\ifthenelse{\equal{#1}{default}}{\left\lvert#2\right\rvert}{\ldelim{#1}{\lvert}#2\rdelim{#1}{\rvert}}}
\newcommand*{\Norm}[2][default]{\ifthenelse{\equal{#1}{default}}{\left\lVert#2\right\rVert}{\ldelim{#1}{\lVert}#2\rdelim{#1}{\rVert}}}
\newcommand*{\Iprod}[3][default]{\ifthenelse{\equal{#1}{default}}{\left\langle#2,#3\right\rangle}{\ldelim{#1}{\langle}#2,#3\rdelim{#1}{\rangle}}}
\newcommand*{\Dualpair}[3][default]{\ifthenelse{\equal{#1}{default}}{\left\langle#2,#3\right\rangle}{\ldelim{#1}{\langle}#2,#3\rdelim{#1}{\rangle}}}
\newcommand*{\List}[2][1]{\set{#1,\ldots,#2}}
\newcommand{\eq}[1]{\begin{align*}#1\end{align*}}
\newcommand{\eqn}[1]{\begin{align}#1\end{align}}
\newcommand{\gs}{\sigma}
\newcommand{\ga}{\alpha}
\newcommand{\gb}{\beta}
\renewcommand{\gg}{\gamma}
\newcommand{\gd}{\delta}
\newcommand{\gl}{\lambda}
\newcommand{\gw}{\omega}
\newcommand{\ieq}[1]{$#1$}
\newcommand{\inv}{^{-1}}
\newcommand*{\ddb}[2][1]{\ifthenelse{\equal{#1}{1}}{\frac{d}{d#2}}{\frac{d^{#1}}{d#2^{#1}}}}
\newcommand*{\pd}[3][1]{\ifthenelse{\equal{#1}{1}}{\frac{\partial{#2}}{\partial{#3}}}{\frac{\partial^{#1}{#2}}{\partial#3^{#1}}}} 
\newcommand*{\limn}[1][n]{\lim_{#1\to\infty}}
\newcommand{\Gconds}{$\mc{G}$-con\-di\-tions}
\newcommand{\yref}{y_{\mbox{\scriptsize\textit{ref}}}}
\newcommand{\blockop}[1]{#1}
\newcommand{\JG}[1][k]{J_{\mc{G}_1}(i\gw_{#1})}
\newcommand{\Ops}{\mc{O}}
\renewcommand{\vec}[1]{\bm{#1}}
\newcommand*{\keyterm}[1]{\emph{#1}}
\newtheorem{theorem}{Theorem}
\newtheorem{lemma}[theorem]{Lemma}
\newtheorem{corollary}[theorem]{Corollary}
\newtheorem{assumption}[theorem]{Assumption}
\theoremstyle{definition}
\newtheorem{remark}[theorem]{Remark} 
\newtheorem{definition}[theorem]{Definition} 
\newenvironment{ORP}{\textbf{The Output Regulation Problem on $W_\ga$.}\it}{}
\newenvironment{RORP}{\textbf{The Robust Output Regulation Problem on $W_\ga$.}\it}{}
\title{The Internal Model Principle for Systems with Unbounded Control and Observation}
\author{Lassi Paunonen\thanks{Tampere University of Technology, PO.Box 553, 33101 Tampere, Finland, \texttt{lassi.paunonen@tut.fi}}
        \and Seppo Pohjolainen\thanks{Department of Mathematics, Tampere University of Technology, PO. Box 553, 33101 Tampere, Finland ({\tt seppo.pohjolainen@tut.fi})}}
\date{~}
\begin{document}

\maketitle
\vspace{-8ex}

\thispagestyle{plain}

\begin{abstract}
In this paper the theory of robust output regulation of distributed parameter systems with infinite-dimensional exosystems is extended for plants with unbounded control and observation. As the main result, we present the internal model principle for linear infinite-dimensional systems with unbounded input and output operators. We do this for two different definitions of an internal model found in the literature, namely, the p-copy internal model and the $\mc{G}$-conditions. We also introduce a new way of defining an internal model for infinite-dimensional systems. The theoretic results are illustrated with an example where we consider robust output tracking for a one-dimensional heat equation with boundary control and pointwise measurements.  
\end{abstract}

\section{Introduction}

The topic of this paper is the theory of robust output regulation for distributed parameter systems. Research in this branch of control of linear systems has been active for over 30 years~\cite{Sch83a,Poh82,HamPoh00,RebWei03,ByrLau00}. The main goal in robust output regulation is to design a control law in such a way that the output $y(t)$ of the system
\begin{subequations}
  \label{eq:plantintro}
  \eqn{
  \dot{x}(t)&= Ax(t)+Bu(t) + w(t), \qquad x(0)=x_0\in X\\
  y(t)& = Cx(t) + Du(t)
  }
\end{subequations}
tracks a given reference signal $\yref(t)$ despite the external disturbance signals $w(t)$. Moreover, the control law needs to be robust with respect to uncertainties in the parameters $(A,B,C,D)$ of the plant. The considered reference and disturbance signals are assumed to be generated by an \keyterm{exosystem} of the form
\begin{subequations}
  \label{eq:exointro}
  \eqn{
  \dot{v}(t)&=Sv(t) \qquad v(0)=v_0\in W\\
  w(t)&=Ev(t)\\
  \yref(t)&=-Fv(t)
  }
\end{subequations}
(the minus sign is for notational convenience). 
With a suitable choice of a finite-dimen\-sional space $W$ and a matrix $S$ with eigenvalues are on the imaginary axis, the class of signals generated by~\eqref{eq:exointro} includes trigonometric functions, polynomials of $t$, and their linear combinations. 
However, if we are interested in nonsmooth reference and disturbance signals, the underlying space $W$ becomes a separable Hilbert space and $S$ is a generator of a strongly continuous group.
In particular, robust tracking and disturbance rejection of any given continuous periodic reference and disturbance signals $\yref^\ast(t)$ and $w^\ast(t)$, respectively, can be formulated as a robust output regulation problem for an infinite-dimensional exosystem~\cite{ImmPoh05b,Pau14a}. Tracking of nonsmooth periodic and almost periodic signals with high accuracy is necessary, e.g., in the control of disk drive systems and robot arms~\cite{Yam93,HarYam88}, and in power electronics~\cite{CosGri09}.

Recent years have seen many succesful efforts in the development of the state space theory of robust output regulation for distributed parameter systems with infinite-dimen\-sional exosystems~\cite{Imm07a,PauPoh10,HamPoh10,PauPoh12a}. In particular, the \keyterm{p-copy internal model principle} of Francis and Wonham~\cite{FraWon75a}, and Davison~\cite{Dav76} was extended for infinite-dimensional linear systems by the current authors in~\cite{PauPoh10}.
This fundamental theorem states that a stabilizing feedback controller solves the robust output regulation problem if and only if it contains a suitable \keyterm{internal model}, i.e., a part that is capable of reproducing the dynamic behavior of the exosystem~\eqref{eq:exointro}.
One of the most important implications of the internal model principle is that the robust output regulation problem can be divided into two parts: One of (i) building an internal model of the exosystem's dynamics into the controller, and (ii) stabilizing the closed-loop system.
This subdivision proves to be especially useful in the case of infinite-dimensional exosystems. For such signal generators exponential closed-loop stability is usually unachievable, and stabilizing the closed-loop system becomes a difficult problem on its own. The internal model principle allows considering the two challenging parts of the main problem separately. In this paper we concentrate on the first subproblem and, in particular, on showing that the internal model in the controller is both necessary and sufficient for the solvability of the robust output regulation problem. 

The purpose of this paper is to extend the theory of robust output regulation and the internal model principle for a larger class of linear systems.
In references~\cite{Imm07a,PauPoh10,HamPoh10,PauPoh12a} the control and observation operators of the plant~\eqref{eq:plantintro} were assumed to be bounded. This standing assumption severely limits the applicability of the theoretic results, because control schemes involving unbounded control and observation are frequently encountered in practical applications. Most notably,
such situations arise
in the control of partial differential equations with boundary control or pointwise measurements~\cite[Ch. 10]{TucWei09}.  We extend the most important parts of the theory presented in the previous references for systems with possibly unbounded $B$ and $C$. 
In the main part of the manuscript we
work under the standing assumption 
that the closed-loop system operator with maximal domain generates a strongly continuous semigroup. This assumption guarantees that the closed-loop system with the dynamic error feedback controller has a well-defined state. 
Subsequently
in Section~\ref{sec:regsys}
we 
show that
the results presented in this paper 
can be used
in the situation where
both the plant and the controller are \keyterm{regular linear systems}~\cite{Wei94,CurWei97,Sta05book}.

In the frequency domain the robust output regulation problem for systems with unbounded control and observation has been considered previously in~\cite{RebWei03,HamPoh00,LogTow97} for finite-dimensional exosystems, and in \cite{WeiHaf99} for a diagonal infinite-dimensional exosystem.
In the state space the robust output regulation for systems with unbounded inputs and outputs has not been considered together with infinite-dimensional exosystems. 
Moreover, the main results of this paper, especially the internal model principle, are also new for an exosystem~\eqref{eq:exointro} on a finite-dimensional space $W=\C^r$. 

Recently in~\cite{PauPoh13a} the robust output regulation problem was studied in a situation where the controller was not required to be robust with respect to \keyterm{all} perturbations to the parameters of the plant, but robustness was instead required with respect to some smaller class of uncertainties. The motivation for this study was that some perturbations of the parameters of the plant may be unrealistic in applications. It was demonstrated in~\cite{PauPoh13a} that there are situations where robustness
(with respect to a smaller class of perturbations) does not 
require a ``full'' internal model in the controller. One of the key results was that the robustness of a controller can be characterized using the solvability of a set of linear equations only involving the transfer function of the plant evaluated at the frequencies of the exosystem, and the operators of the controller. In this paper we extend these results for plants and controllers with unbounded input and output operators. Also, in~\cite{PauPoh13a} the exosystem was assumed to be finite-dimensional and the closed-loop system to be exponentially stabilizable. In this paper we consider an infinite-dimensional exosystem and strongly stabilizable closed-loop systems. Finally, our results also generalize those in~\cite{PauPoh13a} by allowing disturbance signals $w(t)$ to the state of the plant~\eqref{eq:plantintro}. 

The most important contribution of this paper is the extension of the p-copy internal model principle for distributed parameter systems with unbounded input and output operators.
The proof of the internal model principle given in~\cite{PauPoh10} contains parts that can not be extended to the class of systems considered in this paper. Instead, we present a new, more direct proof for the p-copy internal model principle. As a byproduct, the new proof yields a new way of characterizing controllers incorporating an internal model of the exosystem.

We also show that the robustness properties of the controller can equivalently be characterized using the so-called \Gconds\ \cite{HamPoh10,PauPoh10}. The \Gconds\ can be seen as an alternative way of defining an internal model in the controller. The p-copy internal model and the \Gconds\ both have their strengths and weaknesses. In particular, the \Gconds\ 
can be used in characterizing robustness
even if the output space of the plant is infinite-dimensional.

In addition to the unbounded inputs and outputs in the plant~\eqref{eq:plantintro}, we also allow the output operator of the dynamic error feedback controller to be unbounded.
We conjecture that an unbounded operator in the controller will help achieve better stability properties for the closed-loop, especially if the closed-loop system is being stabilized polynomially~\cite{PauPoh13b}.

We conclude the paper by considering robust output regulation for a one-di\-men\-sion\-al heat equation with boundary control and point observation. In the first part of the example, we design a feedback controller with a 2-dimensional internal model to solve the robust output regulation problem for tracking and rejecting constant exogeneous signals. In the second part, we consider tracking of nonsmooth periodic signal using an infinite-dimensional diagonal exosystem. We construct a controller satisfying the \Gconds. The theory presented in this paper shows that the controller solves the robust output regulation problem provided that the remaining parameters of the controller can be chosen in such a way that the closed-loop system is strongly stable.

The organization of the paper is as follows. In Section~\ref{sec:plantexo} we introduce notation, and state the standing assumptions on the plant, the exosystem, and the controller. We also define the class of perturbations considered in robust output regulation. In Sections~\ref{sec:RORP} and~\ref{sec:ORP} we formulate the robust output regulation problem, and show that the solvability of this problem without the requirement of robustness can be characterized using the solvability of regulator equations. Ways of characterizing robustness with respect to a given set of perturbations are studied in Section~\ref{sec:RedIMs}. The p-copy internal model principle is presented in Section~\ref{sec:pcopy}, and in Section~\ref{sec:Gconds} we show that the robustness properties of a controller can also be characterized using the \Gconds. In Section~\ref{sec:regsys} we prove that the results presented in this paper can be used in the situation where the plant and the controller are regular linear systems. In Section~\ref{sec:heatex} we present an example where we design controllers for robust output tracking of a one-dimensional heat equation. Section~\ref{sec:conclusions} contains concluding remarks.

\section{Mathematical Preliminaries}
\label{sec:plantexo}

In this section we introduce the notation and state the assumptions on the plant, the exosystem and the controller. While the input and output operators of the plant and the controller are allowed to be unbounded operators, we assume that the closed-loop system is well-defined in the sense that the closed-loop system operator with maximal domain generates a strongly continuous semigroup.

If $X$ and $Y$ are Banach spaces and $A:X\rightarrow Y$ is a linear operator, we denote by $\Dom(A)$, $\ker(A)$ and $\ran(A)$ the domain, kernel and range of $A$, respectively. The space of bounded linear operators from $X$ to $Y$ is denoted by $\Lin(X,Y)$. If \mbox{$A:X\rightarrow X$,} then $\gs(A)$, $\gs_p(A)$ and $\rho(A)$ denote the spectrum, the point spectrum and the \mbox{resolvent} set of $A$, respectively. For $\gl\in\rho(A)$ the resolvent operator is given by \mbox{$R(\gl,A)=(\gl -A)^{-1}$}. 
The inner product on a Hilbert space and the dual pairing on a Banach space are both denoted by $\iprod{\cdot}{\cdot}$.

For $n\in \N$ we denote $X^n = X\times X\times \cdots \times X$ and $\Dom(A)^n = \Dom(A)\times \cdots \times \Dom(A)$ where a Banach space $X$ and the domain $\Dom(A)$, respectively, are repeated $n$ times. 
If $T\in \Lin(X,Y)$ and $\vec{x} = (x_1, x_2, \ldots, x_n)^T \in X^n$ for some $n\in\N$, then by $\blockop{T}\vec{x}$ we mean that the operator $T$ is applied to all of the components of $\vec{x}$, i.e. $\blockop{T}\vec{x} = (Tx_1, \ldots, Tx_n)^T \in Y^n$.

We consider a linear system~\eqref{eq:plantintro} where  $x(t)\in X$ is the state of the system, $y(t)\in Y$ is the output, and $u(t)\in U$ the input. The spaces $X$, $U$, and $Y$ are Banach spaces.
Here $w(t)\in X$ denotes the disturbance signal to the state of the plant. 
We assume that $A:\Dom(A)\subset X\rightarrow X$ generates a strongly continuous semigroup $T(t)$ on $X$. For a fixed $\gl_0>\gw_0(T(t))$ we define the scale spaces $X_1 = (\Dom(A), \norm{(\gl_0-A)\cdot})$ and $X_{-1}= \overline{(X,\norm{R(\gl_0,A)\cdot})}$ (the completion of $X$ with respect to the norm $\norm{R(\gl_0,A)\cdot}$)~\cite{TucWei09},\cite[Sec. II.5]{EngNag00book}.
We assume the input and output operators of the plant are such that $B\in\Lin(U,X_{-1})$, $C\in\Lin(X_1,Y)$, and the feedthrough operator satisfies $D\in\Lin(U,Y)$. 
We denote by $A_{-1}: X\subset X_{-1}\rightarrow X_{-1}$ and $T_{-1}(t)$ the extensions of the operator $A$ and the semigroup $T(t)$, respectively, to the space $X_{-1}$. 
We assume the operators $B$ and $C$ satisfy $\ran(R(\gl_0,A_{-1})B)\subset \Dom(C)$ 
and $CR(\gl_0,A_{-1})B\in \Lin(U,Y)$
for some/all $\gl_0\in \rho(A)$. 
The transfer function of the system is defined as
\eq{
P(\gl) =CR(\gl,A_{-1})B+D \in \Lin(U,Y)
}
for $\gl\in\rho(A)$.

In the following we construct an infinite-dimensional block diagonal exosystem with frequencies with eigenvalues $(\gw_k)_{k\in \Z}\subset \R$ to generate the reference and disturbance signals. We do this by choosing the parameters of the system~\eqref{eq:exointro} appropriately. The resulting classes of reference and disturbance signals are analyzed in greater detail in~\cite[Sec. 3]{PauPoh12a}.
Let $W$ be a separable Hilbert space with an orthonormal basis
\eq{
\Set{\phi_{k}^l}_{kl}:=\Setm{\phi_k^l\in W}{k\in \Z,~ l=1,\dots,n_k}.
}
More precisely, we have
$W=\overline{\Span}\Set{\phi_k^l}_{kl}$ and $\iprod{\phi_k^l}{\phi_n^m}=\gd_{kn}\gd_{lm}$.
The lengths~$n_k\in\N$ of the subsequences are uniformly bounded. 
For given $(\gw_k)_{k\in\Z}\subset \R$ the operators $S_k\in \Lin(W)$ representing finite-dimen\-sion\-al Jordan blocks 
are defined as
\eq{
S_k=i\gw_k\iprod{\cdot}{\phi_k^1}\phi_k^1 +\sum_{l=2}^{n_k}\iprod{\cdot}{\phi_k^l} \left(i\gw_k\phi_k^l +\phi_k^{l-1}\right).
}
The operators $S_k$ have the property that
$(i\gw_k-S_k)\phi_k^1=0$, and
$(S_k-i\gw_k)\phi_k^{l}=\phi_k^{l-1}$ for all $l\in\List[2]{n_k}$.
The system operator $S$ is defined as
\eq{
Sv=\sum_{k\in\Z}S_kv, \qquad \Dom(S)=\biggl\{ v\in W \biggm| \sum_{k\in\Z}\,\norm{S_kv}^2<\infty \biggr\}.
}

The spectrum of the operator~$S$ satisfies
\ieq{
\gs(S)=\overline{\gs_p(S)}=\overline{\Set{i\gw_k}}_{k\in \Z}.
}
The operator $S$ generates a strongly continuous group $T_S(t)$ on $W$, and
\eq{
T_S(t)v
=\sum_{k\in\Z}e^{i\gw_kt}\sum_{l=1}^{n_k} \iprod{v}{\phi_k^l}\sum_{j=1}^l \frac{t^{l-j}}{(l-j)!}\phi_k^j ,
}
for all $v\in W$, and $t\in\R$.
For any $n_S\in \N$ such that $n_S\geq n_k$ for all $k\in\Z$
there exists~$M_S\geq 1$ such that  
\ieq{
\norm{T_S(t)}\leq M_S(\abs{t}^{n_S}+1)
}
for all $t\in\R$.
The operators $E$ and $F$ are assumed to be bounded in such a way that $E\in \Lin(W,X)$ and $F\in \Lin(W,Y)$.

For $k\in\Z$ we define the orthogonal projection
\ieq{
P_k=\sum_{l=1}^{n_k}\iprod{\cdot}{\phi_k^l}\phi_k^l
} 
onto the finite-dimensional subspace $\Span\set{\phi_k^l}_{l=1}^{n_k}$ of~$W$. With this notation the domain of the operator $S$ satisfies
\eq{
\Dom(S)
&= \Setm{v\in W}{\sum_{k\in\Z} (1+\gw_k^2) \norm{P_kv}^2<\infty} .
} 
We define scale spaces $W_\ga\subset W$ related to the system operator $S$ of the exosystem. 

\begin{definition}
    \label{def:Wga}
    For $\ga\geq 0$ we denote by $(W_\ga,\norm{\cdot}_\ga)$ the Hilbert space
  \eq{
  W_\ga=\Setm{v\in W}{\sum_{k\in\Z}(1+\gw_k^2)^\ga \norm{P_kv}^2<\infty}
  }
  with norm $\norm{\cdot}_\ga$ defined by
  \ieq{
  \norm{v}_\ga^2 =\sum_{k\in\Z}(1+\gw_k^2)^\ga \norm{P_kv}^2 
  }
  for $v\in W_\ga$.
\end{definition}

The spaces $W_\ga$ are invariant under the group~$T_S(t)$, the restrictions $T_S(t)\vert_{W_\ga}$ are strongly continuous groups on $W_\ga$ and the generators of these groups are $S\vert_{W_\ga}:\Dom(S\vert_{W_\ga})\subset W_\ga\rightarrow W_\ga$ with domains $\Dom(S\vert_{W_\ga})=W_{\ga+1}$.

\begin{remark}
  \label{rem:findimexo}
  The results in this paper are presented for infinite-dimensional block diagonal exosystems. However, the main results are also new for systems with unbounded $B$ and $C$ together with a finite-dimensional exosystem of the form~\eqref{eq:exointro} on a finite-dimensional space $W=\C^r$. In this situation the operator $S$ is a matrix in its Jordan canonical form with distinct eigenvalues $\gs(S)=\set{i\gw_k}_{k=1}^q$. The orthonormal basis $\set{\phi_k^l}_{kl}$ of $W$ can be chosen to consists of Euclidean basis vectors $\set{e_k}_{k=1}^r\subset \C^r$ 
in such a way that
  \eq{
  (e_1,\ldots,e_r) 
  = (\phi_1^1,\ldots,\phi_1^{n_1},\phi_2^1,\ldots,\phi_2^{n_2},\ldots,\phi_q^1,\ldots\phi_q^{n_q}),
  }
  where $n_k\in \N$ is the size of the Jordan block $S_k$ associated to the 
  eigenvalue $i\gw_k$ in $S$.
If the exosystem is finite-dimensional, then
many of
the proofs in this paper become simpler due to the fact that the infinite index set $k\in\Z$ is replaced by the finite set $k\in \List{q}$ of indices.
  For a finite-dimensional exosystem we also have $W_\ga = W$ for every $\ga\geq 0$.
  \end{remark}

We consider a dynamic error feedback controller of the form
\eq{
\dot{z}(t) &= \mc{G}_1z(t)+\mc{G}_2e(t), \qquad z(0)=z_0\in Z,\\
u(t) &= Kz(t)
}
on a Banach space $Z$. The operator $\mc{G}_1:\Dom(\mc{G}_1)\subset Z\rightarrow Z$ generates a strongly continuous semigroup $T_{\mc{G}_1}(t)$ on $Z$, 
and the scale space $Z_1$ is defined similarly as for the plant. We assume
$\mc{G}_2\in\Lin(Y,Z)$
and $K\in\Lin(Z_1,U)$. 

The system and the controller can be written together as a closed-loop system on the Banach space $X_{e}=X\times Z$. This composite system with state $x_e(t)=(x(t), z(t))^T$ can be written formally on $X_{-1}\times Z$ as
  \eq{
  \dot{x}_e(t) &= A_ex_e(t)+B_ev(t), \qquad x_e(0)=x_{e0} ,\\
  e(t) &= C_ex_e(t)+D_ev(t),
  }
where $e(t)= y(t)- \yref(t)$ is the \keyterm{regulation error}, $x_{e0}=(x_0, z_0)^T$,  $C_e= \bigl(C, ~ DK\bigr)$, 
$D_e=F$,
  \eq{
  A_e=\pmat{A_{-1}&BK\\\mc{G}_2C&\mc{G}_1+\mc{G}_2DK}, \qquad 
  B_e=\pmat{E\\\mc{G}_2F}.
  }
Due to the unboundedness of the operators $B$, $C$, and $K$ the domain of the operators $A_e$ will not be $\Dom(A)\times \Dom(\mc{G}_1)$ as in references~\cite{HamPoh10,PauPoh12a}. 
Instead, we consider the maximal domain such that $A_e$ is an operator on $X_e$, i.e., maximal domain for which $\ran(A_e)\subset X_e=X\times Z$.
Since $\mc{G}_2Cx + (\mc{G}_1+\mc{G}_2DK)z\in Z$ if and only if $x\in \Dom(C)$ and $z\in \Dom(\mc{G}_1)$, this domain is given by
\eq{
\Dom(A_e) = \biggl\{ \pmat{x\\z}\in \Dom(C)\times \Dom(\mc{G}_1)~\biggm|
~A_{-1}x+BKz\in X\biggr\}.
}
The operator $C_e$ is unbounded with domain $\Dom(C_e) = \Dom(C)\times \Dom(K)\supset \Dom(A_e)$ and $B_e \in \Lin(W,X\times Z)$

\begin{assumption}
  \label{ass:CLsysass}
  Throughout the paper we assume $(A,B,C,D)$ and $(\mc{G}_1,\mc{G}_2,K)$ are such that $A_e$ with the given domain generates a strongly continuous semigroup $T_e(t)$ on $X_e$, and that $C_e$ is relatively bounded with respect to $A_e$.
\end{assumption}

Later in Section~\ref{sec:regsys} we show that Assumption~\ref{ass:CLsysass} is in particular satisfied if the plant and the controller are regular linear systems.
If $\gl_0\in\rho(A_e)$, then the $A_e$-boundedness of $C_e$ is equivalent to the condition $C_e (\gl_0 -A_e)\inv\in \Lin(X_e,Y)$.

\subsection{The Class of Perturbations}
\label{sec:pertclass}

In this paper we consider a situation where parameters of the plant are perturbed in such a way that the operators $A$, $B$, $C$, and $D$ are changed into $\tilde{A}: \Dom(\tilde{A})\subset X\rightarrow X$, $\tilde{B}\in \Lin(U,\tilde{X}_{-1})$, $\tilde{C}\in \Lin(\tilde{X}_1,Y)$, and $\tilde{D}\in \Lin(U,Y)$, respectively. Here $\tilde{X}_1$ and $\tilde{X}_{-1}$ are the scale spaces of $X$ related to the operator $\tilde{A}$.
Moreover, the operators $E$ and $F$ are perturbed in such a way that $\tilde{E}\in \Lin(W,X)$ and $\tilde{F}\in \Lin(W,Y)$.
For $\gl\in \rho(\tilde{A})$ we denote by $\tilde{P}(\gl) = \tilde{C} R(\gl, \tilde{A}_{-1}) \tilde{B} + \tilde{D}$ the transfer function of the perturbed plant.
We likewise denote the operators of the closed-loop system consisting of the perturbed plant and the controller by $\tilde{C}_e=\bigl(\tilde{C},~\tilde{D}K\bigr)$, $\tilde{D}_e = \tilde{F}$ and
\eq{
\tilde{A}_e=\pmat{\tilde{A}_{-1}&\tilde{B}K\\\mc{G}_2\tilde{C}&\mc{G}_1+\mc{G}_2\tilde{D}K},
\qquad \tilde{B}_e = \pmat{\tilde{E}\\\mc{G}_2 \tilde{F}}.
}

\begin{assumption}
  \label{ass:pertclass}
  The perturbations $(\tilde{A},\tilde{B},\tilde{C},\tilde{D},\tilde{E},\tilde{F})$ in the class $\Ops$ of considered perturbations are assumed to satisfy the following conditions:
  \begin{itemize}
    \item[\textup{(a)}] The perturbed system operator $\tilde{A}$ generates a strongly continuous semigroup on $X$ and satisfies $i\gw_k \in \rho(\tilde{A})$ for all $k\in \Z$. 
      The operators $\tilde{B}$ and $\tilde{C}$ are such that $\ran(R(\gl_0,\tilde{A}_{-1})\tilde{B})\subset \Dom(\tilde{C})$ 
      and $\tilde{C}R(\gl,_0,\tilde{A}_{-1})\tilde{B}\in \Lin(U,Y)$
      for some/all $\gl_0\in \rho(\tilde{A})$.  
    \item[\textup{(b)}] The perturbed closed-loop system operator $\tilde{A}_e$ with maximal domain generates a strongly stable strongly continuous semigroup on $X_e$ and $\tilde{C}_e$ is $\tilde{A}_e$-bounded.
    \item[\textup{(c)}] The Sylvester equation $\Sigma S = \tilde{A}_e \Sigma + \tilde{B}_e$ has a solution $\Sigma \in \Lin(W_\ga,X_e)$ satisfying $\Sigma(W_{\ga+1})\subset \Dom(\tilde{A}_e)$.
  \end{itemize}
\end{assumption}

If the unperturbed closed-loop system is exponentially stable, then the conditions of Assumption~\ref{ass:pertclass} are satisfied, in particular, for any bounded perturbations of small enough norms.
If the exosystem is finite-dimensional (see Remark~\ref{rem:findimexo}), then the Sylvester equation $\Sigma S = A_e\Sigma + B_e$ has a solution $\Sigma \in \Lin(W,X_e)$ satisfying $\ran(\Sigma)\subset \Dom(A_e) $ provided that $\gs(A_e)\cap \gs(S)= \varnothing $~\cite{Vu91}. Likewise, part (c) of Assumption~\ref{ass:pertclass} is satisfied whenever $\gs(\tilde{A}_e)\cap \gs(S)=\varnothing$.

\subsection{Special Operators}
\label{sec:SpecialOps}

To state some of the main results of the paper, we need additional notation. 
For $k\in \Z$ and $n\in\N$ we define the operator 
$\JG: \Dom(\mc{G}_1)^n \subset Z^n \rightarrow Z^n$
to be a block upper triangular operator with diagonal elements $i\gw_k - \mc{G}_1$ and identity operators $I$ on the first superdiagonal, i.e.,
\eq{
\JG = \pmatsmall{i\gw_k - \mc{G}_1&I\\&i\gw_k - \mc{G}_1\\&&\ddots &I\\ &&&i\gw_k - \mc{G}_1}.
}
The form of the operator $\JG$ immediately implies that for all $\vec{z} = (z_{n_k},\ldots,z_1)^T \in \Dom(\mc{G}_1)^{n_k}$ such that $\vec{z}\neq 0$ the condition $\JG \vec{z} = 0$ is equivalent to $(z_l)_{l=1}^{n_k}$ forming a Jordan chain of $\mc{G}_1$ associated to the eigenvalue $i\gw_k$, i.e. $(i\gw_k - \mc{G}_1)z_1 = 0$ and $(\mc{G}_1 - i\gw_k)z_{l} = z_{l-1}$ for $l\in \List[2]{n_k}$.

For $k\in\Z$ and for an operator $\tilde{A}$ denote $\tilde{R}_k = R(i\gw_k,\tilde{A}_{-1})$. 
We define a block triangular operator $\mathbb{R}(i\gw_k, \tilde{A}_{-1})\in \Lin(X^{n_k})$ by
\eq{
\mathbb{R}(i\gw_k,\tilde{A}_{-1}) 
= \pmatsmall{\tilde{R}_k & -\tilde{R}_k^2& \cdots & (-1)^{n_k-1} \tilde{R}_k^{n_k} \\
&\tilde{R}_k &  \cdots & (-1)^{n_k-2} \tilde{R}_k^{n_k-1} \\
&&\ddots&\vdots\\
&&&\tilde{R}_k
}.
}
For $k\in\Z$ and for operators $\tilde{A}$, $\tilde{B}$, $\tilde{C}$, and $\tilde{D}$ satisfying $i\gw_k\in \rho(\tilde{A})$, we denote by $\tilde{\mathbb{P}}(i\gw_k) \in \Lin(U^{n_k},Y^{n_k})$ the operator 
\eq{
\tilde{\mathbb{P}}(i\gw_k) 
= \pmatsmall{\tilde{P}(i\gw_k) & -\tilde{C}\tilde{R}_k^2\tilde{B}& \cdots & (-1)^{n_k-1} \tilde{C}\tilde{R}_k^{n_k} \tilde{B}\\
&\tilde{P}(i\gw_k) &  \cdots & (-1)^{n_k-2} \tilde{C}\tilde{R}_k^{n_k-1} \tilde{B}\\
&&\ddots & \vdots\\[.7ex]
&&&\tilde{P}(i\gw_k)
}
= \blockop{\tilde{C}}
\mathbb{R}(i\gw_k,\tilde{A}_{-1} )
\blockop{\tilde{B}} + \blockop{\tilde{D}}.
}
For the operators $A$, $B$, $C$, and $D$ of the nominal plant, we use the notation $\mathbb{P}(i\gw_k)$.
Finally, for $k\in \Z$ we define $\Phi_k = (\phi_k^{n_k}, \phi_k^{n_k-1}, \ldots, \phi_k^1)^T \in W^{n_k}$.

It should be noted that if for some $k\in \Z$ we have $n_k=1$, then the above operators reduce to $\JG = i\gw_k - \mc{G}_1$, $\mathbb{R}(i\gw_k,\tilde{A}_{-1})= R(i\gw_k,\tilde{A}_{-1})$, and $\tilde{\mathbb{P}}(i\gw_k)=\tilde{P}(i\gw_k)$.

\section{Control Objectives}
\label{sec:RORP}

In this section we formulate the robust output regulation problem. 
The problem statement depends on the parameter $\ga>0$. In particular, the decay of the regulation error is required only for the reference and disturbance signals corresponding to the initial states $v_0\in W_{\ga+1}$ of the exosystem.
As shown in~\cite[Sec. 3]{PauPoh12a}, in the case of the periodic reference and disturbance signals the choices of the initial states of the exosystem are directly related to the level of smoothness of the signals to be tracked and rejected.

\begin{RORP}
  Choose the controller $(\mc{G}_1,\mc{G}_2,K)$ in such a way that the following are satisfied:
\begin{itemize}
  \item[\textup{(a)}] The closed-loop system operator $A_e$ generates a strongly stable semigroup.
  \item[\textup{(b)}] 
  For all initial states $x_{e0}\in \Dom(A_e)$ and  $v_0\in W_{\ga+1}$ the regulation error decays to zero asymptotically, i.e., $e(t)\rightarrow 0$ as $t\rightarrow \infty$.
\item[\textup{(c)}] If the operators $(A,B,C,D,E,F)$ are perturbed to $(\tilde{A},\tilde{B},\tilde{C},\tilde{D},\tilde{E},\tilde{F})\in \Ops$ (i.e. the perturbed closed-loop system is strongly stable and additional assumptions made in Section~\textup{\ref{sec:pertclass}} are satisfied),
   then for all initial states $x_{e0}\in \Dom(\tilde{A}_e)$ and  $v_0\in W_{\ga+1}$ the regulation error satisfies $e(t)\rightarrow 0$ as $t\rightarrow \infty$.
\end{itemize}
\end{RORP}

The parts (a) and (b)  of the robust output regulation problem (i.e., the problem without the requirement for robustness) are referred to as the \keyterm{output regulation problem}.

\begin{ORP}
  Choose the controller $(\mc{G}_1,\mc{G}_2,K)$ in such a way that parts \textup{(a)} and \textup{(b)} of the robust output regulation problem are satisfied.
\end{ORP}

\section{Characterizing the Solvability of the Output Regulation Problem}
\label{sec:ORP}

In this section we show that the solvability of the output regulation problem can be characterized using the solvability of the so-called \keyterm{regulator equations}~\cite{FraWon75a,ByrLau00}.

\begin{theorem}
  \label{thm:ORP} 
  Assume the controller\/ $(\mc{G}_1,\mc{G}_2,K)$ is such that
  $A_e$ generates a strongly stable semigroup on~$X_e$, and that the Sylvester equation 
$\Sigma S=A_e\Sigma +B_e  $
 on $W_{\ga+1}$ has a solution $\Sigma\in \Lin(W_\ga,X_e)$.
  Then the following are equivalent:
\begin{enumerate}
\item[\textup{(a)}] The controller\/ $(\mc{G}_1,\mc{G}_2,K)$ solves the output regulation problem on~$W_\ga$.
\item[\textup{(b)}] The regulator equations
  \begin{subequations}
    \label{eq:regeqns}
    \eqn{
    \label{eq:ORPSyl}
    \Sigma S&=A_e\Sigma +B_e  \\
    0&=C_e\Sigma +D_e \label{eq:ORPRegconstr}
    }
  \end{subequations}
  on $W_{\ga+1}$ have a solution $\Sigma\in \Lin(W_\ga,X_e)$.
\end{enumerate}
\end{theorem}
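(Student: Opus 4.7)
The approach is to eliminate the inhomogeneous forcing in the closed-loop dynamics via the shift $\tilde{x}_e(t) := x_e(t) - \Sigma v(t)$. The Sylvester equation $\Sigma S = A_e\Sigma + B_e$, read on $W_{\ga+1}$, guarantees both that $\Sigma v_0 \in \Dom(A_e)$ whenever $v_0 \in W_{\ga+1}$ (because then $A_e\Sigma v_0 = \Sigma S v_0 - B_e v_0 \in X_e$) and that $\tilde x_e(t) = T_e(t)(x_{e0} - \Sigma v_0)$ for every $t\geq 0$. Consequently,
\eq{
e(t) = C_e T_e(t)(x_{e0} - \Sigma v_0) + (C_e\Sigma + D_e)\, T_S(t) v_0 ,
}
where for $x_{e0}\in\Dom(A_e)$ and $v_0\in W_{\ga+1}$ both summands are well-defined, since $x_{e0}-\Sigma v_0 \in \Dom(A_e)\subset \Dom(C_e)$ and $T_S(t)v_0\in W_{\ga+1}$.

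The implication (b) $\Rightarrow$ (a) is now immediate. The second summand vanishes by the output equation $C_e\Sigma + D_e = 0$. For the first, set $y = x_{e0} - \Sigma v_0 \in \Dom(A_e)$ and use the invariance $T_e(t)\Dom(A_e)\subset \Dom(A_e)$ together with $A_e T_e(t) y = T_e(t) A_e y$; strong stability gives $T_e(t)y \to 0$ and $T_e(t)A_ey \to 0$ in $X_e$, and the relative bound $\Norm{C_e w}\leq a\Norm{w} + b\Norm{A_ew}$ on $\Dom(A_e)$ forces $e(t)\to 0$.

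For the converse, the same decay argument shows that (a) is equivalent to the requirement that $(C_e\Sigma + D_e) T_S(t) v_0 \to 0$ as $t\to\infty$ for every $v_0 \in W_{\ga+1}$. I would first evaluate this on the basis vectors $\phi_k^l \in W_{\ga+1}$: the explicit formula
\eq{
T_S(t)\phi_k^l = e^{i\gw_k t}\sum_{j=1}^{l}\frac{t^{l-j}}{(l-j)!}\phi_k^j
}
expresses the orbit as a polynomial of degree $l-1$ in $t$ modulated by a factor of modulus one, so its image under $C_e\Sigma + D_e$ tends to zero only if each coefficient $(C_e\Sigma + D_e)\phi_k^j$ vanishes. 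Induction on $l$ therefore yields the vanishing of $C_e\Sigma + D_e$ on $\Span\Set{\phi_k^l}_{kl}$.

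The main obstacle is the continuity step that promotes this vanishing on the basis to vanishing on all of $W_{\ga+1}$. Here $D_e = F \in \Lin(W,Y)$ is clearly bounded on $W_{\ga+1}$. For $C_e\Sigma$ I would combine the $A_e$-boundedness of $C_e$ with the Sylvester equation to obtain the graph-norm estimate $\Norm{A_e\Sigma v}\leq \Norm{\Sigma}\Norm{Sv}_\ga + \Norm{B_e}\Norm{v}$; since $S\colon W_{\ga+1}\to W_\ga$ is bounded and $\Sigma \in \Lin(W_\ga, X_e)$, this gives the continuity of $C_e\Sigma\colon W_{\ga+1}\to Y$. Density of $\Span\Set{\phi_k^l}_{kl}$ in $W_{\ga+1}$ then closes the argument and establishes (b).
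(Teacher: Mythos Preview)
Your proposal is correct and follows essentially the same route as the paper. The paper packages the three ingredients you identify---the error decomposition $e(t)=C_eT_e(t)(x_{e0}-\Sigma v_0)+(C_e\Sigma+D_e)T_S(t)v_0$ obtained from the Sylvester equation, the boundedness $C_e\Sigma\in\Lin(W_{\ga+1},Y)$ derived from the $A_e$-boundedness of $C_e$, and the non-decay argument on the basis vectors $\phi_k^l$---into separate auxiliary results (Lemmas~\ref{lem:CSigmarelbdd}, \ref{lem:ORPCLstateform}, and~\ref{lem:exonondecay}), but the logic is the same.
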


For the proof of the theorem we need some auxiliary results. In particular, Lemma~\ref{lem:ORPCLstateform} shows that the state of the closed-loop system and the regulation error can be expressed using the solution $\Sigma$ of the Sylvester equation~\eqref{eq:ORPSyl}.

\begin{lemma}
  \label{lem:CSigmarelbdd}
  If $1\in \rho(A_e)$ and if $\Sigma\in \Lin(W_\ga,X_e)$ is the solution of~\eqref{eq:ORPSyl}, then $C_e\Sigma \in \Lin(W_{\ga+1},Y)$.
\end{lemma}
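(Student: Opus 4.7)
The plan is to reduce everything to the fact that, because $1\in\rho(A_e)$, the $A_e$-relative boundedness of $C_e$ is equivalent to $C_e R(1,A_e)\in\Lin(X_e,Y)$, and to use the Sylvester equation to replace the awkward quantity $A_e\Sigma v$ with a bounded expression in $v$.

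First I would unpack what the hypothesis ``$\Sigma$ solves \eqref{eq:ORPSyl} on $W_{\ga+1}$'' actually gives. For any $v\in W_{\ga+1}=\Dom(S\vert_{W_\ga})$ the right-hand side $A_e\Sigma v+B_ev$ must lie in $X_e$, which forces $\Sigma v\in\Dom(A_e)$, and moreover
\eq{
A_e\Sigma v=\Sigma Sv-B_ev,\qquad v\in W_{\ga+1}.
}
Since $S\vert_{W_\ga}\colon W_{\ga+1}\to W_\ga$ is bounded, $\Sigma\in\Lin(W_\ga,X_e)$, and $B_e\in\Lin(W,X_e)$, the right-hand side is a bounded operator from $W_{\ga+1}$ to $X_e$. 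So both $\Sigma$ and $A_e\Sigma$ are bounded as maps $W_{\ga+1}\to X_e$.

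Now I would exploit the $A_e$-boundedness of $C_e$. For any $x\in\Dom(A_e)$,
\eq{
C_ex=C_eR(1,A_e)(1-A_e)x,
}
and by Assumption~\ref{ass:CLsysass} together with $1\in\rho(A_e)$, the operator $C_eR(1,A_e)$ belongs to $\Lin(X_e,Y)$. Applying this with $x=\Sigma v$ for $v\in W_{\ga+1}$ and substituting the Sylvester identity yields
\eq{
C_e\Sigma v=C_eR(1,A_e)\bigl(\Sigma v-\Sigma Sv+B_ev\bigr).
}
Each of the three summands inside the parentheses defines a bounded operator from $W_{\ga+1}$ into $X_e$, so composing with $C_eR(1,A_e)\in\Lin(X_e,Y)$ shows $C_e\Sigma\in\Lin(W_{\ga+1},Y)$.

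I do not expect any real obstacle here; the only subtlety is justifying that $\Sigma$ actually maps $W_{\ga+1}$ into $\Dom(A_e)$, which is implicit in the statement that $\Sigma$ solves the Sylvester equation on $W_{\ga+1}$ but deserves to be stated explicitly before invoking the identity $C_ex=C_eR(1,A_e)(1-A_e)x$.
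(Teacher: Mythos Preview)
Your proof is correct and is essentially the same as the paper's: both write $C_e\Sigma v=C_e(1-A_e)^{-1}(1-A_e)\Sigma v$, substitute the Sylvester identity to get $(1-A_e)\Sigma v=\Sigma v-\Sigma Sv+B_ev$ (the paper groups this as $-\Sigma(S-I)v+B_ev$), and then use $C_e(1-A_e)^{-1}\in\Lin(X_e,Y)$ together with the boundedness of $\Sigma$, $S\vert_{W_\ga}$, and $B_e$. Your explicit remark that $\Sigma(W_{\ga+1})\subset\Dom(A_e)$ is implicit in the Sylvester equation is a worthwhile clarification.
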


\begin{proof}
  Let $v\in W_{\ga+1}$. The Sylvester equation~\eqref{eq:ORPSyl} implies $\Sigma (S-I)v = (A_e- I)\Sigma v+ B_e v$. Now $\Sigma v \in \Dom(A_e)\subset \Dom(C_e)$ and using~\eqref{eq:ORPSyl} we have
  \eq{
  \MoveEqLeft[2.7]\norm{C_e\Sigma v} 
  = \norm{C_e (A_e - I)\inv (A_e -  I) \Sigma v}
  =  \norm{C_e (A_e -  I)\inv (\Sigma (S-I) v - B_e v)}\\
&\leq  \norm{C_e (A_e-I)\inv}_{\Lin(X_e,Y)} \left( \norm{\Sigma}_{\Lin(W_\ga,X_e)} \norm{(S-I)v}_\ga 
+ \norm{B_e}_{\Lin(W_\ga,X_e)} \norm{v}_\ga \right),
  }
  which implies that $C_e\Sigma \in \Lin(W_{\ga+1},Y)$.
\end{proof}

\begin{lemma}
  \label{lem:ORPCLstateform}
Let $\Sigma \in \Lin(W_\ga,X_e)$ be a solution of the Sylvester equation~\eqref{eq:ORPSyl}.
    For all initial states $x_{e0}\in X_e$ and $v_0\in W$ and for all $t\geq 0$  the state of the closed-loop system satisfies
      \ieq{
      x_e(t)= T_e(t)(x_{e0}-\Sigma v_0)+\Sigma v(t),
      }
and for all $x_{e0}\in \Dom(A_e)$ and $v_0\in W_{\ga+1}$ the regulation error is given by
      \eq{
      e(t)&=C_eT_{e}(t)(x_{e0}-\Sigma v_0) +(C_e\Sigma +D_e)v(t).
      }
  If $x_{e0}\in \Dom(A_e)$ and $v_0\in W_{\ga+1}$, then the regulation error $e(t)$ is continuous and satisfies
  \ieq{
  \norm{e(t) - (C_e\Sigma + D_e)T_S(t)v_0} \rightarrow 0
  }
  as $t\rightarrow \infty$.
\end{lemma}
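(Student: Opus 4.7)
The plan is to deduce the three claims in sequence from the Sylvester equation together with the standing relative boundedness of $C_e$.

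I would begin with the state formula. Taking classical data $x_{e0}\in\Dom(A_e)$ and $v_0\in W_{\ga+1}$, invariance of $W_{\ga+1}$ under $T_S(t)$ gives $v(t)\in W_{\ga+1}$, and the Sylvester equation yields $\Sigma v(t)\in\Dom(A_e)$ with $\tfrac{d}{dt}\Sigma v(t)=\Sigma S v(t)=A_e\Sigma v(t)+B_e v(t)$. Since $\Sigma v_0\in\Dom(A_e)$, the orbit $T_e(t)(x_{e0}-\Sigma v_0)$ is classical as well, so the candidate $\tilde{x}_e(t):=T_e(t)(x_{e0}-\Sigma v_0)+\Sigma v(t)$ satisfies $\dot{\tilde{x}}_e=A_e\tilde{x}_e+B_e v$ with $\tilde{x}_e(0)=x_{e0}$. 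Uniqueness of the mild solution forces $\tilde{x}_e=x_e$. The formula then extends to arbitrary $x_{e0}\in X_e$ and $v_0\in W_\ga$ by density of $W_{\ga+1}$ in $W_\ga$, using that both $T_e(t)$ and $\Sigma\in\Lin(W_\ga,X_e)$ are bounded and that the mild solution depends continuously on $(x_{e0},v_0)$ through $B_e\in\Lin(W,X_e)$.

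The regulation error formula is obtained by applying $C_e$ to the state formula and adding $D_e v(t)$. Here one must verify that each term lies in $\Dom(C_e)\supset\Dom(A_e)$: for $x_{e0}\in\Dom(A_e)$ and $v_0\in W_{\ga+1}$, both $x_{e0}-\Sigma v_0\in\Dom(A_e)$ (by the Sylvester equation) and $\Sigma v(t)\in\Dom(A_e)$ (since $v(t)\in W_{\ga+1}$), so the action of $C_e$ is legitimate. Continuity of $e(\cdot)$ then follows from the graph-norm continuity of $t\mapsto T_e(t)(x_{e0}-\Sigma v_0)$, the $A_e$-boundedness of $C_e$, the continuity of $t\mapsto T_S(t)v_0$ into $W_{\ga+1}$, and Lemma~\ref{lem:CSigmarelbdd}, which gives $C_e\Sigma\in\Lin(W_{\ga+1},Y)$.

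For the asymptotic claim, the difference $e(t)-(C_e\Sigma+D_e)T_S(t)v_0$ reduces to $C_e T_e(t) y_0$ with $y_0:=x_{e0}-\Sigma v_0\in\Dom(A_e)$. The relative bound $\norm{C_e y}\le M(\norm{y}+\norm{A_e y})$ for $y\in\Dom(A_e)$, together with $A_e T_e(t) y_0=T_e(t)A_e y_0$, controls this by $M(\norm{T_e(t)y_0}+\norm{T_e(t)A_e y_0})$, and both terms vanish under the strong stability of $T_e(t)$ inherited from the hypotheses of Theorem~\ref{thm:ORP}. The main obstacle is one of bookkeeping rather than depth: because $B$, $C$, and $K$ are unbounded, the Sylvester equation must be exploited precisely to deliver the regularity $\Sigma(W_{\ga+1})\subset\Dom(A_e)$ needed to make $C_e\Sigma v_0$ meaningful, and the argument has to track this regularity at every step so that $C_e$ is never applied outside its domain.
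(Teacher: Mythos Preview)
Your proposal is correct and follows essentially the same route as the paper. The only cosmetic difference is in the derivation of the state formula: the paper differentiates $s\mapsto T_e(t-s)\Sigma T_S(s)v$ and integrates to identify the convolution $\int_0^t T_e(t-s)B_e T_S(s)v\,ds$ with $\Sigma T_S(t)v-T_e(t)\Sigma v$, whereas you verify directly that the candidate is a classical solution and invoke uniqueness; for the asymptotic decay the paper rewrites $C_eT_e(t)y_0=C_e(A_e-I)^{-1}T_e(t)(A_e-I)y_0$ while you use the equivalent relative-bound inequality.
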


\begin{proof}
  Let $v\in W_{\ga+1}$. Then  $\Sigma v\in \Dom(A_e)$ and 
  for all $t>s$ we have
  \eq{
  \MoveEqLeft T_{e}(t-s)B_eT_S(s)v= T_{e}(t-s)(\Sigma S-A_e\Sigma ) T_S(s)v \\[1ex]
  &= -T_{e}(t-s)A_e\Sigma  T_S(s)v +T_{e}(t-s)\Sigma ST_S(s)v
  = \ddb{s}\left(T_{e}(t-s)\Sigma  T_S(s)v\right).
  }
  Integrating both sides of this equation from $0$ to $t>0$ gives
  \eqn{
  \label{eq:Syleqnmildform}
  \int_0^t T_{e}(t-s)B_eT_S(s)vds=\Sigma T_S(t)v-T_{e}(t)\Sigma v.
  }
  Since the operators on both sides of this equation are in $\Lin(W_\ga,X_{e})$ and since $W_{\ga+1}$ is dense in $W_\ga$, we have that~\eqref{eq:Syleqnmildform} holds for all $v\in W_\ga$ and $t>0$.

For all $x_{e0}\in X_e$ and $v_0\in W_\ga$ the mild state of the closed-loop system is given by
  \eq{
  x_e(t)=T_e(t)x_{e0} + \int_0^t T_e(t-s)B_eT_S(s)v_0ds.
  }
  We can now use~\eqref{eq:Syleqnmildform} to conclude that
\eq{
  x_e(t) &=T_e(t)x_{e0} + \Sigma T_S(t)v_0-T_{e}(t)\Sigma v_0
   = T_e(t)(x_{e0}-\Sigma v_0) + \Sigma T_S(t)v_0.
  }
  If $x_{e0}\in \Dom(A_e)$ and $v_0\in W_{\ga+1}$, then $\Sigma T_S(t)v_0\in \Dom(A_e)\subset \Dom(C_e)$ for all $t\geq 0$ and
the regulation error is given by 
\eq{
e(t) &=C_ex_e(t)+D_ev(t) 
=C_eT_{e}(t)(x_{e0}-\Sigma v_0) +(C_e\Sigma +D_e)T_S(t)v_0.
}
Since $t\mapsto T_S(t)v_0\in W_{\ga+1}$ is continuous and since by Lemma~\ref{lem:CSigmarelbdd} we have $C_e\Sigma + D_e\in \Lin(W_{\ga+1},Y)$, we can see that $t\mapsto (C_e\Sigma + D_e)T_S(t)v_0$ is continuous. Since we have
\eq{
 C_eT_{e}(t)(x_{e0}-\Sigma v_0) 
&= C_e(A_e-I)\inv (A_e-I)T_e(t)(x_{e0}-\Sigma v_0)\\
&= C_e(A_e-I)\inv T_e(t)(A_e-I)(x_{e0}-\Sigma v_0)
}
where $C_e(A_e-I)\inv\in \Lin(X_e,Y)$, we can conclude that $e(t)$ is continuous. Moreover, 
\eq{
\MoveEqLeft[2]\norm{e(t)-(C_e\Sigma +D_e)T_S(t)v_0}
= \norm{ C_eT_{e}(t)(x_{e0}-\Sigma v_0)}\\
&\leq \norm{ C_e(A_e-I)\inv } \norm{T_{e}(t)(A_e-I)(x_{e0}-\Sigma v_0)}
\rightarrow 0
}
as $t\rightarrow \infty$ due to the strong stability of $T_e(t)$.
\end{proof}

We can now use the previous results to prove Theorem~\ref{thm:ORP}. 

\textit{Proof of Theorem\/ \textup{\ref{thm:ORP}}.}
We will first show that (b)~implies~(a). Assume the regulator equations~\eqref{eq:regeqns} have a solution $\Sigma\in \Lin(W_\ga,X_e)$. 
Since $T_{e}(t)$ is strongly stable, we have from Lemma~\ref{lem:ORPCLstateform} that for all initial states $x_{e0}\in \Dom(A_e)$ and $v_0\in W_{\ga+1}$
\eq{
\limn[t]\,\Norm{e(t)}=\limn[t]\,\Norm{e(t)-(C_e\Sigma +D_e)v(t)}=0,
}
since $C_e\Sigma +D_e=0$ on $W_{\ga+1}$. Thus the controller solves the output regulation problem on $W_\ga$.

It remains to prove that (a)~implies~(b). Assume the controller solves the output regulation problem on $W_\ga$ and $\Sigma \in\Lin(W_\ga,X_{e})$ is a solution of the Sylvester equation~\eqref{eq:ORPSyl} on~$W_{\ga+1}$. Since the regulation error decays to zero asymptotically for all initial states of the closed-loop system and the exosystem, 
Lemma~\ref{lem:ORPCLstateform} implies that for all $x_{e0}\in \Dom(A_e)$ and $v_0\in W_{\ga+1}$ we must have
\eq{
 \norm{(C_e\Sigma +D_e)T_S(t)v_0}
\leq \norm{(C_e\Sigma +D_e)T_S(t)v_0-e(t)}+\norm{e(t)}\stackrel{t\rightarrow \infty}{\longrightarrow} 0,
}
and thus $\limn[t](C_e\Sigma +D_e)T_S(t)v_0=0$ for every $v_0\in W_{\ga+1}$. 
Since $C_e\Sigma + D_e \in \Lin(W_{\ga+1},Y)$, we have from
Lemma~\ref{lem:exonondecay} that $\Sigma$ satisfies equation~\eqref{eq:ORPRegconstr}.
\hfill\endproof

\subsection{Properties of the Sylvester Equation $\Sigma S = A_e\Sigma + B_e$}

We conclude the section by stating some relevant properties of the Sylvester equation in Theorem~\ref{thm:ORP}. It should be noted that there are more convenient sufficient conditions for the solvability of the equation than the one given in Theorem~\ref{thm:Sylprop}(b). In particular, this is the case if the norms $\norm{R(i\gw,A_e)}$ are polynomially bounded with respect to $\abs{\gw}$ for $\gw\in\R$~\cite{PauPoh12a,PauPoh13b}. If $X$ and $Z$ are Hilbert spaces, this is equivalent to the closed-loop system being polynomially stable~\cite{BorTom10}.
Also, if the exosystem is finite-dimensional, then $S$ is a bounded operator and the Sylvester equation $\Sigma S = A_e \Sigma + B_e$ has a unique bounded solution
whenever $\gs(A_e)\cap \gs(S)=\varnothing$~\cite{Vu91}.

\begin{theorem}
  \label{thm:Sylprop}
Assume the closed-loop system is strongly stable and let $\ga\geq 0$. Then the Sylvester equation
\eqn{
\label{eq:Sylprops}
\Sigma S = A_e\Sigma + B_e
}
has the following properties.
\begin{itemize}
  \item[\textup{(a)}] The equation~\eqref{eq:Sylprops} may have at most one solution.
  \item[\textup{(b)}] If $i\gw_k\in \rho(A_e)$ for all $k\in\Z$, and if 
    \eqn{
    \label{eq:seriesass}
    \hspace{-1ex}{\sup_{\norm{x_e'}\leq 1}}~{\sum_{k\in\Z}\frac{1}{(1+\gw_k^2)^\ga}\sum_{l=1}^{n_k} \Abs{\sum_{j=1}^l (-1)^{l-j}\dualpair{R(i\gw_k,A_e)^{l+1-j}B_e\phi_k^j}{x_e'} }^2<\infty,}
    \hspace{-3ex}
    }
    where $x_e'\in X_e'$, the dual space of~$X_e$, then~\eqref{eq:Sylprops} has a unique solution $\Sigma\in \Lin(W_\ga,X_e)$ satisfying $\Sigma (W_{\ga+1})\subset \Dom(A_e)$. The solution is given by
    \eqn{
    \label{eq:SylpropsSol}
\Sigma v=\sum_{k\in\Z}\sum_{l=1}^{n_k} \iprod{v}{\phi_k^l}\sum_{j=1}^l(-1)^{l-j} R(i\gw_k,A_e)^{l+1-j}B_e\phi_k^j, \qquad v\in W_\ga.
    } 
  \item[\textup{(c)}] If $i\gw_k\in\rho(A_e)$ for all $k\in\Z$ and if~\eqref{eq:Sylprops} has a solution $\Sigma\in \Lin(W_\ga,X_e)$, then the condition~\eqref{eq:seriesass} is satisfied, and $\Sigma$ is given by~\eqref{eq:SylpropsSol}.
  \item[\textup{(d)}] If~\eqref{eq:Sylprops} has a solution $\Sigma\in \Lin(W_\ga,X_e)$, then for every $k\in\Z$ the equation $\Sigma_k S = A_e \Sigma_k + B_e P_k$ has a unique solution $\Sigma_k = \Sigma P_k\in \Lin(W,X_e)$.
\end{itemize}
\end{theorem}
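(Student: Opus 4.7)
My plan treats the four parts separately; the common engine is the Jordan-chain structure $S\phi_k^1=i\gw_k\phi_k^1$ and $S\phi_k^l=i\gw_k\phi_k^l+\phi_k^{l-1}$ for $l\geq 2$, combined with the resolvent identity $A_eR(i\gw_k,A_e)=i\gw_kR(i\gw_k,A_e)-I$. A short calculation using the latter yields the key algebraic identity
\[
A_e\psi_k^l+B_e\phi_k^l=i\gw_k\psi_k^l+\psi_k^{l-1}\qquad(\psi_k^0:=0),
\]
where $\psi_k^l:=\sum_{j=1}^l(-1)^{l-j}R(i\gw_k,A_e)^{l+1-j}B_e\phi_k^j$ is the summand in~\eqref{eq:SylpropsSol}. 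For part (a), if $\Sigma_1,\Sigma_2$ are two solutions, their difference $\Delta\in\Lin(W_\ga,X_e)$ satisfies $\Delta Sv=A_e\Delta v$ on $W_{\ga+1}$. Strong stability of $T_e(t)$ forbids purely imaginary eigenvalues of $A_e$, so $i\gw_k\notin\gs_p(A_e)$; evaluating at $\phi_k^1$ forces $\Delta\phi_k^1=0$, and induction on each Jordan chain using $(A_e-i\gw_k)\Delta\phi_k^l=\Delta\phi_k^{l-1}$ yields $\Delta\phi_k^l=0$ for all $k,l$. Density of $\Span\{\phi_k^l\}$ in $W_\ga$ closes the argument.

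For part (b), rewrite~\eqref{eq:seriesass} as $\sup_{\norm{x_e'}\leq 1}\sum_{k,l}(1+\gw_k^2)^{-\ga}\abs{\iprod{\psi_k^l}{x_e'}}^2\leq M^2<\infty$. By Hahn--Banach duality together with a Cauchy--Schwarz estimate in the weighted $\ell^2$ pairing, the partial sums $\Sigma_Nv:=\sum_{\abs{k}\leq N,l}\iprod{v}{\phi_k^l}\psi_k^l$ are Cauchy in $X_e$ for every $v\in W_\ga$ and converge to $\Sigma v\in X_e$ with $\norm{\Sigma v}\leq M\norm{v}_\ga$. Each $\psi_k^l$ lies in $\Dom(A_e)$, so $\Sigma_Nv\in\Dom(A_e)$; using the key identity together with $\iprod{Sv}{\phi_k^l}=i\gw_k\iprod{v}{\phi_k^l}+\iprod{v}{\phi_k^{l+1}}$ (convention $\iprod{v}{\phi_k^{n_k+1}}:=0$), a direct index-shift shows that for $v\in W_{\ga+1}$ the sequence $A_e\Sigma_Nv$ converges in $X_e$ to $\Sigma Sv-B_ev$. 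Closedness of $A_e$ then delivers $\Sigma v\in\Dom(A_e)$ and $\Sigma Sv=A_e\Sigma v+B_ev$.

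For part (c), given $\Sigma\in\Lin(W_\ga,X_e)$ solving~\eqref{eq:Sylprops} with $i\gw_k\in\rho(A_e)$, applying the equation to $\phi_k^1$ gives $\Sigma\phi_k^1=R(i\gw_k,A_e)B_e\phi_k^1=\psi_k^1$; induction on $l$ via $(i\gw_k-A_e)\Sigma\phi_k^l=B_e\phi_k^l-\Sigma\phi_k^{l-1}$ yields $\Sigma\phi_k^l=\psi_k^l$ for all $l$, and boundedness plus density force~\eqref{eq:SylpropsSol}. Identifying the dual of $W_\ga$ with sequences $(c_{kl})$ of finite weighted norm $(\sum(1+\gw_k^2)^{-\ga}\abs{c_{kl}}^2)^{1/2}$ via $F\mapsto(F(\phi_k^l))_{k,l}$, and applying this to $F(v)=\iprod{\Sigma v}{x_e'}$, yields~\eqref{eq:seriesass} with the constant bounded by $\norm{\Sigma}$. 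For part (d), the relation $S_j\phi_k^l=0$ for $j\neq k$ implies $P_kS=SP_k$ on $\Dom(S)$; hence $\Sigma_k:=\Sigma P_k\in\Lin(W,X_e)$ satisfies $\Sigma_kSv=\Sigma SP_kv=A_e\Sigma P_kv+B_eP_kv$. Uniqueness in $\Lin(W,X_e)$ is the same Jordan-chain induction as in part (a), now carried out on every basis vector $\phi_j^l$ (including $j\neq k$).

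The main technical obstacle will be part (b): showing that the formal series defining $\Sigma v$ converges not merely weakly in $X_e''$ but in norm in $X_e$, and more delicately that $\Sigma v\in\Dom(A_e)$ for $v\in W_{\ga+1}$. The latter cannot be read off the series directly and requires pushing the partial sums through the closed operator $A_e$, where the identity $A_e\psi_k^l=i\gw_k\psi_k^l+\psi_k^{l-1}-B_e\phi_k^l$ cancels the unbounded contributions and reduces $A_e\Sigma_Nv$ to a sum that converges under exactly the hypothesis $v\in W_{\ga+1}$.
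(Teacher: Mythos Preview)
Your argument is correct in all four parts, but it diverges from the paper's proof in (a) and, to a lesser extent, in (b) and (c). For (a) the paper proceeds via the semigroup identity $\Delta T_S(t)v=T_e(t)\Delta v$, obtained by differentiating $s\mapsto T_e(t-s)\Delta T_S(s)v$, and then invokes strong stability together with Lemma~\ref{lem:exonondecay} to conclude $\Delta=0$. Your route is purely algebraic: strong stability rules out imaginary eigenvalues, so $i\gw_k-A_e$ is injective and the Jordan-chain recursion forces $\Delta\phi_k^l=0$. Your argument is shorter and avoids the integral identity, while the paper's approach does not need to reason about the point spectrum of~$A_e$ separately and reuses the machinery (Lemma~\ref{lem:exonondecay}) already developed for Theorem~\ref{thm:ORP}. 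For (b) and (c) the paper simply cites \cite[Lem.~3.2, 3.4]{PauPoh10}, whereas you supply the full arguments: the weighted Cauchy--Schwarz/Hahn--Banach bound for norm convergence, the index-shift computation reducing $A_e\Sigma_Nv$ to $\Sigma_N(Sv)-B_e(\sum_{\abs{k}\leq N}P_kv)$, and the explicit identification of the dual norm on $W_\ga$ to recover~\eqref{eq:seriesass} from $\norm{\Sigma}$. Your treatment is thus more self-contained. Part~(d) matches the paper's argument.
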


\begin{proof}
For the proof of part (a) 
  let $\Sigma_1,\Sigma_2\in \Lin(W_\ga,X_e)$ be two solutions of the Sylvester equation. 
  We have
  \eq{
 \left\{
 \begin{array}{l}
  \Sigma_1 S = A_e\Sigma_1  +B_e \\
  \Sigma_2 S = A_e\Sigma_2  +B_e 
   \end{array}
 \right. 
 \qquad\Rightarrow \qquad 
  (\Sigma_1-\Sigma_2) S = A_e(\Sigma_1-\Sigma_2)
  }
  on $W_{\ga+1}$.
  Denote $\Delta = \Sigma_1-\Sigma_2$. 
  For all $t>0$ and $v\in W_{\ga+1}$
  \eq{
 \Delta T_S(t) v - T_e(t) \Delta v 
  &= \Bigl[ T_e(t-s)\Delta T_S(s) v \Bigr]_{s=0}^t
  = \int_0^t \ddb{s} \Bigl( T_e(t-s)\Delta T_S(s) v \Bigr)ds\\
  &= \int_0^t  T_e(t-s) \left( -A_e \Delta + \Delta S \right) T_S(s) v ds
  = 0
  }
  and thus
  \ieq{
  (\Sigma_1-\Sigma_2)T_S(t)v = T_e(t) (\Sigma_1-\Sigma_2)  v
  }
for all $t\geq 0$. Since for all $t\geq 0$ the operators on both sides of the equation are in $\Lin(W_{\ga},X_e)$ and since $W_{\ga+1}$ is dense in $W_\ga$, the above identity holds for all $v\in W_\ga$.

Since $T_e(t)$ is strongly stable, for all $v\in W_\ga$ we have 
  \eq{
 \norm{(\Sigma_1-\Sigma_2)T_S(t)v_0}
= \norm{ T_e(t) (\Sigma_1-\Sigma_2)  v_0 }
\rightarrow 0 
  }
  as $t\rightarrow \infty$. Since $\Sigma_1-\Sigma_2\in \Lin(W_\ga,X_e)$, Lemma~\ref{lem:exonondecay}  implies that $\Sigma_1-\Sigma_2=0$.
  This concludes that the Sylvester equation may have at most one solution. 

  We will next prove part (b). Since $i\gw_k\in\rho(A_e)$ for all $k\in\Z$, we have that $B_e\phi_k^l\in\ran(i\gw_k -A_e)^{n_k-l+1}$ for every $k\in\Z$ and  $l\in\List{n_k}$. Since~\eqref{eq:seriesass} is satisfied, we have from Lemma~3.2 in~\cite{PauPoh10} that the Sylvester equation~\eqref{eq:Sylprops} has a solution $\Sigma\in \Lin(W_\ga,X_e)$ given by~\eqref{eq:SylpropsSol} (in~\cite{PauPoh10} $\ga$ was assumed to be an integer, but the result remains valid for all nonnegative $\ga$).

In order to prove (c)  assume that $i\gw_k\in\rho(A_e)$ for all $k\in\Z$ and that the Sylvester equation has a solution $\Sigma \in \Lin(W_\ga,X_e)$. Let $k\in\Z$.
  Applying both sides of the equation $\Sigma S = A_e\Sigma + B_e$ to the elements $\set{\phi_k^l}_{l=1}^{n_k}$, we obtain
    \eq{
    (i\gw_k - A_e) \Sigma \phi_k^1 &= B_e\phi_k^1\\
    (i\gw_k - A_e) \Sigma \phi_k^2 + \Sigma \phi_k^1 &= B_e\phi_k^2  \\
    \nonumber
    &\vdots\\
    (i\gw_k - A_e) \Sigma \phi_k^{n_k} + \Sigma \phi_k^{n_k-1} &= B_e\phi_k^{n_k} 
    }
  Solving the equations recursively shows that for any $v\in P_kW = \Span\set{\phi_k^l}_{l=1}^{n_k}$ we have
  \eq{
\Sigma v=
\sum_{l=1}^{n_k} \iprod{v}{\phi_k^l}\sum_{j=1}^l(-1)^{l-j} R(i\gw_k,A_e)^{l+1-j}B_e\phi_k^j.
  }
  Since $k\in\Z$ was arbitrary, we have that the operator defined by~\eqref{eq:SylpropsSol} is equal to the unique solution of the Sylvester equation~\eqref{eq:Sylprops} on all subspaces $P_kW$. Therefore, the operator defined by~\eqref{eq:SylpropsSol} is in $\Lin(W_\ga,X_e)$. Finally, we have from~\cite[Lem. 3.4]{PauPoh10} that since $\Sigma$ is in $\Lin(W_\ga,X_e)$, the condition~\eqref{eq:seriesass} is satisfied.

  To prove (d), let $k\in\Z$. If~\eqref{eq:Sylprops} has a solution $\Sigma\in \Lin(W_\ga,X_e)$, then clearly $\Sigma P_k\in \Lin(W,X_e)$ (since $\ran(P_k)\subset W_\ga$). For all $v\in \Dom(S)$ we have $P_kv\in W_{\ga+1}$ and
  \eq{
  \Sigma P_k S v  = \Sigma S P_k v = A_e \Sigma P_k v + B_e P_k v.
  }
  This concludes that $\Sigma P_k$ is a solution of the Sylvester equation $\Sigma_k S = A_e \Sigma_k + B_e P_k $. 
  The uniqueness of the solution follows from part (a) when we change $B_e$ to $B_eP_k$.
\end{proof}

\section{Characterizing Robustness with Respect to Given Perturbations}
\label{sec:RedIMs}

In this section we present a way of testing the robustness of a controller with respect to given perturbations. The following theorem extends the results presented in~\cite{PauPoh13a}, where the system had bounded input and output operators, the exosystem was finite-dimensional, and the closed-loop system was exponentially stable. Theorem~\ref{thm:Robchareqns} and its corollaries will also be instrumental in the proofs of the results presented in Sections~\ref{sec:pcopy} and~\ref{sec:Gconds}.

\begin{theorem}
  \label{thm:Robchareqns}
  A controller $(\mc{G}_1,\mc{G}_2,K)$ solving the output regulation problem is robust with respect to given perturbations $(\tilde{A},\tilde{B},\tilde{C},\tilde{D},\tilde{E},\tilde{F})\in \Ops$
  if and only if  the equations
  \begin{subequations}
    \label{eq:Robchar}
    \eqn{
    \tilde{\mathbb{P}}(i\gw_k)K \vec{z}^k &= -  \tilde{C} \mathbb{R}(i\gw_k, \tilde{A}) \tilde{E}\Phi_k -\tilde{F}\Phi_k   \label{eq:RobcharPK}\\[1ex]
    \JG \vec{z}^k &= 0 \label{eq:RobcharJG} 
    }
  \end{subequations}
  have a solution $\vec{z}^k = (z_{n_k}^k, \ldots, z_1^k)^T\in \Dom(\mc{G}_1)^{n_k}$ for all $k\in \Z$. Moreover, for every $k\in \Z$ the solution of~\eqref{eq:Robchar} is unique.
\end{theorem}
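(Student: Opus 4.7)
The plan is to reduce robustness of the controller to the regulator constraint $\tilde C_e \tilde\Sigma + \tilde D_e = 0$ on $W_{\ga+1}$ for the perturbed plant, and then to read off the stated block-triangular equations by evaluating that constraint on the orthonormal basis $\{\phi_k^l\}_{kl}$. Assumption~\ref{ass:pertclass}(c) supplies a solution $\tilde\Sigma \in \Lin(W_\ga, X_e)$ of the perturbed Sylvester equation $\tilde\Sigma S = \tilde A_e \tilde\Sigma + \tilde B_e$ satisfying $\tilde\Sigma(W_{\ga+1}) \subset \Dom(\tilde A_e)$, and Theorem~\ref{thm:Sylprop}(a) makes it unique. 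Applying Theorem~\ref{thm:ORP} to the perturbed closed-loop system, robustness is equivalent to $\tilde C_e \tilde\Sigma + \tilde D_e = 0$ on $W_{\ga+1}$. Since this operator lies in $\Lin(W_{\ga+1}, Y)$ by Lemma~\ref{lem:CSigmarelbdd} and $\Span\{\phi_k^l\}_{kl}$ is dense in $W_{\ga+1}$, the constraint reduces to $\tilde C_e \tilde\Sigma \phi_k^l + \tilde F \phi_k^l = 0$ for every $k \in \Z$ and $l \in \{1,\dots,n_k\}$.

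Next, decompose $\tilde\Sigma \phi_k^l = (\tilde x_l^k, z_l^k)^T$. Using $S\phi_k^l = i\gw_k \phi_k^l + \phi_k^{l-1}$ with $\phi_k^0 := 0$, the two block components of the Sylvester equation read
\begin{align*}
(i\gw_k - \tilde A_{-1})\tilde x_l^k &= \tilde B K z_l^k + \tilde E \phi_k^l - \tilde x_{l-1}^k, \\
(i\gw_k - \mc G_1) z_l^k &= \mc G_2\bigl(\tilde C \tilde x_l^k + \tilde D K z_l^k + \tilde F \phi_k^l\bigr) - z_{l-1}^k,
\end{align*}
with $\tilde x_0^k = z_0^k = 0$. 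Solving the first recursively (as in the proof of Theorem~\ref{thm:Sylprop}(c)) yields $\tilde x_l^k = \sum_{j=1}^l (-1)^{l-j} \tilde R_k^{l-j+1}(\tilde B K z_j^k + \tilde E \phi_k^j)$, which is precisely the pattern encoded in the triangular operators $\mathbb{R}(i\gw_k, \tilde A_{-1})$ and $\tilde{\mathbb{P}}(i\gw_k)$ of Section~\ref{sec:SpecialOps}. Substituting this into the constraint $\tilde C \tilde x_l^k + \tilde D K z_l^k + \tilde F \phi_k^l = 0$ and reversing the index order to match $\vec{z}^k = (z_{n_k}^k,\dots,z_1^k)^T$ and $\Phi_k = (\phi_k^{n_k},\dots,\phi_k^1)^T$ produces exactly the rows of $\tilde{\mathbb{P}}(i\gw_k) K \vec{z}^k = -\tilde C \mathbb{R}(i\gw_k, \tilde A_{-1}) \tilde E \Phi_k - \tilde F \Phi_k$; feeding the same constraint back into the second-component equation collapses it to $(i\gw_k - \mc G_1) z_l^k + z_{l-1}^k = 0$, i.e., the reversed form of $\JG \vec{z}^k = 0$.

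For the converse, given any solution $\vec{z}^k$ of the two equations I would define $\tilde x_l^k$ by the explicit formula above and check directly that $(\tilde x_l^k, z_l^k)^T$ satisfies the perturbed Sylvester equation at each $\phi_k^l$; Theorem~\ref{thm:Sylprop}(d) applied to $\tilde\Sigma P_k$ then forces $(\tilde x_l^k, z_l^k)^T = \tilde\Sigma \phi_k^l$, so the regulator constraint holds on each $\phi_k^l$ by construction of $\vec{z}^k$, and the reduction in the first paragraph yields robustness. Uniqueness of $\vec{z}^k$ follows at once from the uniqueness of $\tilde\Sigma$. I expect the principal obstacle to be the bookkeeping around the reverse ordering in $\vec{z}^k$ and $\Phi_k$, together with the domain checks in the unbounded-operator setting --- in particular, confirming $\tilde x_l^k \in X$ (using $\tilde R_k \in \Lin(X_{-1}, X)$) and $(\tilde x_l^k, z_l^k)^T \in \Dom(\tilde A_e)$ before applying $\tilde C_e$ pointwise, which is where Assumption~\ref{ass:pertclass}(a) is used.
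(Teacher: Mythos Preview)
Your proposal is correct and follows essentially the same approach as the paper: reduce robustness to the perturbed regulator constraint $\tilde C_e\tilde\Sigma+\tilde D_e=0$ via Theorem~\ref{thm:ORP} (the paper isolates this as Lemma~\ref{lem:RORP}), then read off the block-triangular equations by expanding the Sylvester equation componentwise on the basis $\{\phi_k^l\}$ (the paper packages this calculation as Lemma~\ref{lem:Sylsplit}), with uniqueness coming from the uniqueness of the Sylvester solution (the paper's Lemma~\ref{lem:RORPcharunique}). The only difference is organizational: where the paper constructs a single global $\Sigma$ and invokes the (c)$\Leftrightarrow$(d) part of Lemma~\ref{lem:Sylsplit}, you work locally at each $k$ and appeal to the uniqueness in Theorem~\ref{thm:Sylprop}(d), which is an equally valid route.
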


The proof of the theorem is based on the following lemma and certain properties of the regulator equations.

\begin{lemma}
  \label{lem:RORP} 
  Assume the controller\/ $(\mc{G}_1,\mc{G}_2,K)$ solves the output regulation problem on $W_\ga$. The controller is robust with respect to perturbations $(\tilde{A},\tilde{B},\tilde{C},\tilde{D},\tilde{E},\tilde{F})\in \Ops$
  if and only if
\ieq{
\tilde{C}_e \tilde{\Sigma} + \tilde{D}_e = 0
}
on $W_{\ga+1}$.
\end{lemma}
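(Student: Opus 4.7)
The strategy is to recognise that ``the controller is robust with respect to the perturbation $(\tilde A,\tilde B,\tilde C,\tilde D,\tilde E,\tilde F)\in\Ops$'' is, by definition, exactly the statement that conditions (a) and (b) of the output regulation problem hold for the perturbed closed-loop system $(\tilde A_e,\tilde B_e,\tilde C_e,\tilde D_e)$. I would then invoke Theorem~\ref{thm:ORP} applied to the perturbed plant to reduce robustness to the solvability of the perturbed regulator equations.

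More precisely, the first step is to collect the hypotheses supplied by Assumption~\ref{ass:pertclass}: part~(b) gives that $\tilde A_e$ generates a strongly stable semigroup on $X_e$ with $\tilde C_e$ relatively bounded, and part~(c) gives a solution $\tilde\Sigma\in\Lin(W_\ga,X_e)$ of the perturbed Sylvester equation $\tilde\Sigma S=\tilde A_e\tilde\Sigma+\tilde B_e$ with $\tilde\Sigma(W_{\ga+1})\subset\Dom(\tilde A_e)$. These are precisely the standing hypotheses required to apply Theorem~\ref{thm:ORP} to the perturbed system.

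Next I would unfold the definition of robustness. Part~(c) of the Robust Output Regulation Problem, specialized to the given perturbation, asks that for all $x_{e0}\in\Dom(\tilde A_e)$ and $v_0\in W_{\ga+1}$ one has $e(t)\to 0$ as $t\to\infty$. Combined with Assumption~\ref{ass:pertclass}(b), this is verbatim the statement that $(\mc G_1,\mc G_2,K)$ solves the output regulation problem for the perturbed plant. Hence, by Theorem~\ref{thm:ORP} (applied with $(A,B,C,D,E,F)$ replaced by their tilded counterparts), robustness is equivalent to the existence of a solution $\Sigma'\in\Lin(W_\ga,X_e)$ of the perturbed regulator equations
\begin{align*}
\Sigma' S=\tilde A_e\Sigma'+\tilde B_e,\qquad \tilde C_e\Sigma'+\tilde D_e=0\quad\text{on }W_{\ga+1}.
\end{align*}

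The final step is to identify this solution. By Theorem~\ref{thm:Sylprop}(a) the Sylvester equation $\Sigma' S=\tilde A_e\Sigma'+\tilde B_e$ has at most one solution in $\Lin(W_\ga,X_e)$, so any such $\Sigma'$ must coincide with the $\tilde\Sigma$ furnished by Assumption~\ref{ass:pertclass}(c). Consequently the perturbed regulator equations are solvable if and only if this specific $\tilde\Sigma$ additionally satisfies $\tilde C_e\tilde\Sigma+\tilde D_e=0$ on $W_{\ga+1}$, which is the claimed characterization. I do not foresee a genuine obstacle here: the proof is essentially a clean application of Theorem~\ref{thm:ORP} to the perturbed system, with the only minor point being to cite uniqueness from Theorem~\ref{thm:Sylprop}(a) in order to replace the abstract ``there exists a solution $\Sigma'$'' by the concrete $\tilde\Sigma$ appearing in the statement.
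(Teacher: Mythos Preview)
Your proposal is correct and follows essentially the same route as the paper: reduce robustness to the output regulation problem for the perturbed system and invoke Theorem~\ref{thm:ORP} with the tilded operators, using the standing hypotheses from Assumption~\ref{ass:pertclass}. The only difference is that you spell out the appeal to uniqueness (Theorem~\ref{thm:Sylprop}(a)) to pin down $\Sigma'=\tilde\Sigma$, whereas the paper leaves this implicit in its reading of Theorem~\ref{thm:ORP}.
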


\begin{proof}
  Since the controller solves the output regulation problem on $W_\ga$, it remains to verify the third part of the robust output regulation problem. This part requires that the controller solves the output regulation problem for the perturbed operators $(\tilde{A},\tilde{B},\tilde{C},\tilde{D},\tilde{E},\tilde{F})$. However, since $\tilde{A}_e$ generates a strongly stable semigroup and since $\tilde{\Sigma} S=\tilde{A}_e\tilde{\Sigma}+\tilde{B}_e$ has a solution $\tilde{\Sigma}\in \Lin(W_\ga,X_e)$, we have from Theorem~\ref{thm:ORP} that this is true if and only if 
  $\tilde{C}_e \tilde{\Sigma} + \tilde{D}_e = 0$ is satisfied on $W_{\ga+1}$.
\end{proof}

The proof of Lemma~\ref{lem:Sylsplit} is presented in Appendix~\ref{sec:AppA}.

\begin{lemma}
  \label{lem:Sylsplit}
Assume $(\tilde{A},\tilde{B},\tilde{C},\tilde{D},\tilde{E},\tilde{F})$ satisfy parts \textup{(a)} and \textup{(b)} of Assumption~\textup{\ref{ass:pertclass}} and let $k\in\Z$.
For an operator $\Sigma = (\Pi, \Gamma)^T \in \Lin(W_\ga,X_e)$
the following are equivalent.
\begin{itemize}
  \item[\textup{(a)}] The operator $\Sigma$ satisfies $\ran(\Sigma P_k)\subset \Dom(\tilde{A}_e)$ and  $\Sigma P_k S = \tilde{A}_e \Sigma P_k + \tilde{B}_e P_k$
\item[\textup{(b)}] The operator $\Sigma$ satisfies
  $\ran(\Sigma P_k) \subset \Dom(\tilde{C})\times \Dom(\mc{G}_1)$ and
\begin{subequations}
        \label{eq:Sylsplit}
      \eqn{
        \label{eq:Sylsplit1}
      \JG  \Gamma \Phi_k &= \mc{G}_2\left(\tilde{\mathbb{P}}(i\gw_k)K \Gamma \Phi_k +  \tilde{C} \mathbb{R}(i\gw_k,\tilde{A}) \tilde{E}\Phi_k + \tilde{F}\Phi_k \right)\\
        \label{eq:Sylsplit2}
      \Pi \Phi_k &=
      \mathbb{R}(i\gw_k,\tilde{A}_{-1})\left( \tilde{B}K\Gamma \Phi_k + \tilde{E}\Phi_k\right).
      }
\end{subequations}
  \end{itemize}
  If $\Sigma = (\Pi,\Gamma)^T$ satisfies one of the above conditions, then
  \eqn{
  \label{eq:Sylsplit3}
  \tilde{C}_e\Sigma \Phi_k + \tilde{D}_e \Phi_k &= \tilde{\mathbb{P}}(i\gw_k )K\Gamma \Phi_k + \tilde{C} \mathbb{R}(i\gw_k,\tilde{A})\tilde{E}\Phi_k+  \tilde{F}\Phi_k.
  }

Moreover, if $(\tilde{A},\tilde{B},\tilde{C},\tilde{D},\tilde{E},\tilde{F})\in \Ops$, then for an operator $\Sigma : \Dom(\Sigma)\subset W\rightarrow X_e$
the following are equivalent.
\begin{itemize}
  \item[\textup{(c)}] The operator $\Sigma$ (or its extension) satisfies $\Sigma \in \Lin(W_\ga,X_e)$ and $\Sigma(W_{\ga+1})\subset \Dom(\tilde{A}_e)$, and it is a solution of the Sylvester equation $\Sigma S = \tilde{A}_e \Sigma + \tilde{B}_e $
\item[\textup{(d)}] The operator $\Sigma$ satisfies
  $\ran(\Sigma P_k) \subset \Dom(\tilde{C})\times \Dom(\mc{G}_1)$ and~\eqref{eq:Sylsplit} for all $k\in\Z$.
  \end{itemize}
  If one of the above conditions is satisfied, then~\eqref{eq:Sylsplit3} is satisfied for all $k\in\Z$.
\end{lemma}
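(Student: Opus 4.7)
The plan is to reduce both parts of the lemma to a block-by-block analysis on the finite-dimensional $S$-invariant subspace $P_kW=\Span\{\phi_k^l\}_{l=1}^{n_k}$, using that $S\phi_k^1=i\gw_k\phi_k^1$ and $S\phi_k^l=i\gw_k\phi_k^l+\phi_k^{l-1}$ for $l\geq 2$. Writing $\Sigma\phi_k^l=(\Pi\phi_k^l,\Gamma\phi_k^l)^T$ and substituting the block forms of $\tilde{A}_e$ and $\tilde{B}_e$, the identity $\Sigma P_k S=\tilde{A}_e\Sigma P_k+\tilde{B}_eP_k$ evaluated at each $\phi_k^l$ decouples into two scalar recursions: the top row gives $(i\gw_k-\tilde{A}_{-1})\Pi\phi_k^l+\Pi\phi_k^{l-1}=\tilde{B}K\Gamma\phi_k^l+\tilde{E}\phi_k^l$ and the bottom row gives $(i\gw_k-\mc{G}_1)\Gamma\phi_k^l+\Gamma\phi_k^{l-1}=\mc{G}_2(\tilde{C}\Pi\phi_k^l+\tilde{D}K\Gamma\phi_k^l+\tilde{F}\phi_k^l)$, both with the convention $\Pi\phi_k^0=\Gamma\phi_k^0=0$.

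For the equivalence (a)~$\Leftrightarrow$~(b), I would solve the top-row recursion with the resolvent $\tilde{R}_k=R(i\gw_k,\tilde{A}_{-1})$, obtaining the closed form $\Pi\phi_k^l=\sum_{p=1}^{l}(-1)^{l-p}\tilde{R}_k^{l-p+1}(\tilde{B}K\Gamma\phi_k^p+\tilde{E}\phi_k^p)$; stacking these $n_k$ identities in the reversed order dictated by $\Phi_k=(\phi_k^{n_k},\ldots,\phi_k^1)^T$ yields precisely~\eqref{eq:Sylsplit2}. Substituting this formula into the bottom-row recursion and stacking again produces $\JG\Gamma\Phi_k$ on the left (the $(i\gw_k-\mc{G}_1)$-diagonal and $I$-superdiagonal of $\JG$ encode exactly the combinations $(i\gw_k-\mc{G}_1)\Gamma\phi_k^l+\Gamma\phi_k^{l-1}$) and, on the right, $\mc{G}_2(\tilde{\mathbb{P}}(i\gw_k)K\Gamma\Phi_k+\tilde{C}\mathbb{R}(i\gw_k,\tilde{A})\tilde{E}\Phi_k+\tilde{F}\Phi_k)$ via the factorization $\tilde{\mathbb{P}}(i\gw_k)=\blockop{\tilde{C}}\mathbb{R}(i\gw_k,\tilde{A}_{-1})\blockop{\tilde{B}}+\blockop{\tilde{D}}$, which is~\eqref{eq:Sylsplit1}. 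The converse (b)~$\Rightarrow$~(a) simply runs these manipulations backwards; for the domain condition $\ran(\Sigma P_k)\subset\Dom(\tilde{A}_e)$, the memberships $\Pi\phi_k^l\in\Dom(\tilde{C})$ and $\Gamma\phi_k^l\in\Dom(\mc{G}_1)$ are built into~(b), and the compatibility $\tilde{A}_{-1}\Pi\phi_k^l+\tilde{B}K\Gamma\phi_k^l\in X$ follows by rearranging the top-row identity as $\tilde{A}_{-1}\Pi\phi_k^l+\tilde{B}K\Gamma\phi_k^l=i\gw_k\Pi\phi_k^l+\Pi\phi_k^{l-1}-\tilde{E}\phi_k^l\in X$. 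The identity~\eqref{eq:Sylsplit3} then drops out by applying $\tilde{C}_e=(\tilde{C},\tilde{D}K)$ to $\Sigma\Phi_k$ and invoking the same factorization of $\tilde{\mathbb{P}}(i\gw_k)$ together with~\eqref{eq:Sylsplit2}.

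The global statement (c)~$\Leftrightarrow$~(d) is a short bookkeeping argument combining the first part with Theorem~\ref{thm:Sylprop} and Assumption~\ref{ass:pertclass}. In (c)~$\Rightarrow$~(d), Theorem~\ref{thm:Sylprop}(d) applied to $\tilde{A}_e,\tilde{B}_e$ shows that $\Sigma P_k\in\Lin(W,X_e)$ solves the block Sylvester equation with range in $\Dom(\tilde{A}_e)$, and then (a)~$\Rightarrow$~(b) of Part~1 delivers~\eqref{eq:Sylsplit} for each $k$. In (d)~$\Rightarrow$~(c), Assumption~\ref{ass:pertclass}(c) supplies a bounded global solution $\tilde{\Sigma}\in\Lin(W_\ga,X_e)$ with $\tilde{\Sigma}(W_{\ga+1})\subset\Dom(\tilde{A}_e)$; by the already-proved direction (c)~$\Rightarrow$~(d) this $\tilde{\Sigma}$ satisfies~\eqref{eq:Sylsplit}, and applying Part~1 in the direction (b)~$\Rightarrow$~(a) to both $\Sigma$ and $\tilde{\Sigma}$ shows that $\Sigma P_k$ and $\tilde{\Sigma} P_k$ both solve the block Sylvester equation $\Sigma_k S=\tilde{A}_e\Sigma_k+\tilde{B}_eP_k$. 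The uniqueness furnished by Theorem~\ref{thm:Sylprop}(a) (applied to this block equation, using the strong stability of $\tilde{T}_e(t)$) then forces $\Sigma P_k=\tilde{\Sigma} P_k$ for every $k$, so the extension of $\Sigma$ from $\bigcup_k P_kW$ to $W_\ga$ coincides with $\tilde{\Sigma}$ and~(c) follows. I expect the only real difficulty to be clerical, namely aligning the reversed-order indexing of $\Phi_k$ with the triangular matrices $\JG$, $\mathbb{R}(i\gw_k,\tilde{A}_{-1})$, and $\tilde{\mathbb{P}}(i\gw_k)$; the one genuinely technical step is the domain verification in (b)~$\Rightarrow$~(a) and the use of Assumption~\ref{ass:pertclass}(c) in (d)~$\Rightarrow$~(c) to promote the block-wise information into a bounded operator on $W_\ga$.
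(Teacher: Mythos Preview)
Your proposal is correct and follows essentially the same route as the paper: the block-by-block expansion of $\Sigma P_k S=\tilde{A}_e\Sigma P_k+\tilde{B}_eP_k$ on the Jordan basis $\{\phi_k^l\}$, the recursive solution of the top row via $\tilde{R}_k=R(i\gw_k,\tilde{A}_{-1})$ to obtain~\eqref{eq:Sylsplit2}, substitution into the bottom row for~\eqref{eq:Sylsplit1}, the domain check $\tilde{A}_{-1}\Pi\phi_k^l+\tilde{B}K\Gamma\phi_k^l\in X$ in the converse direction, and the (c)$\Leftrightarrow$(d) argument via Theorem~\ref{thm:Sylprop} and Assumption~\ref{ass:pertclass}(c) with a density/uniqueness step are all exactly what the paper does.
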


The uniqueness of the solution of the Sylvester equation and Lemma~\ref{lem:Sylsplit} imply the following.

\begin{lemma}
  \label{lem:RORPcharunique}
Assume $(\tilde{A},\tilde{B},\tilde{C},\tilde{D},\tilde{E},\tilde{F})$ satisfy parts \textup{(a)} and \textup{(b)} of Assumption~\textup{\ref{ass:pertclass}} and let $k\in\Z$. If the equation
  \eqn{
  \label{eq:RORPcharunique}
  \JG \vec{z}^k = \mc{G}_2 \left( \tilde{\mathbb{P}}(i\gw_k)K\vec{z}^k + \tilde{C}\mathbb{R}(i\gw_k,\tilde{A})\tilde{E}\Phi_k + \tilde{F}\Phi_k \right)
  }
  has a solution $\vec{z}^k\in \Dom(\mc{G}_1)^{n_k}$, then $\Sigma_k S = \tilde{A}_e\Sigma_k + \tilde{B}_eP_k$ has a solution $\Sigma_k\in \Lin(W,X_e)$.

  On the other hand, if $(\tilde{A},\tilde{B},\tilde{C},\tilde{D},\tilde{E},\tilde{F})\in \Ops$, then for every $k\in\Z$ the equation~\eqref{eq:RORPcharunique} 
  has a unique solution $\vec{z}^k\in \Dom(\mc{G}_1)^{n_k}$. 
\end{lemma}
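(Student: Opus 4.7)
The strategy is to use Lemma~\ref{lem:Sylsplit} as a dictionary that translates between the Sylvester equation $\Sigma_k S = \tilde{A}_e \Sigma_k + \tilde{B}_e P_k$ and the pair of equations \eqref{eq:Sylsplit}. The key observation is that \eqref{eq:Sylsplit1} is literally the same as \eqref{eq:RORPcharunique} under the identification $\vec{z}^k \leftrightarrow \Gamma \Phi_k$, while \eqref{eq:Sylsplit2} provides an explicit formula for the accompanying $\Pi \Phi_k$.

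For the first assertion, starting from a solution $\vec{z}^k = (z_{n_k}^k,\ldots,z_1^k)^T \in \Dom(\mc{G}_1)^{n_k}$ of \eqref{eq:RORPcharunique}, I would build a finite-rank operator $\Sigma_k\in\Lin(W,X_e)$ that vanishes on $(I-P_k)W$ and whose restriction to $P_kW$ is determined by setting $\Gamma \Phi_k := \vec{z}^k$ and defining $\Pi \Phi_k$ via the right-hand side of \eqref{eq:Sylsplit2}. Boundedness of $\Sigma_k$ is automatic since its range is finite-dimensional. The delicate point is verifying $\ran(\Sigma_k P_k) \subset \Dom(\tilde{C})\times \Dom(\mc{G}_1)$: the $\Dom(\mc{G}_1)$-part is immediate from $\vec{z}^k \in \Dom(\mc{G}_1)^{n_k}$, while each component of $\Pi \Phi_k$ lies in $\Dom(\tilde{C})$ because the single power of $\tilde{R}_k$ combined with $\tilde{B}$ lands in $\Dom(\tilde{C})$ by Assumption~\ref{ass:pertclass}(a), the single power of $\tilde{R}_k$ combined with $\tilde{E}\phi_k^l \in X$ lands in $\Dom(\tilde{A}) = \tilde{X}_1 \subset \Dom(\tilde{C})$, and higher powers $\tilde{R}_k^j$ with $j\ge 2$ land in $\Dom(\tilde{A})$ directly. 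With this verified, \eqref{eq:Sylsplit1} holds because $\vec{z}^k$ solves \eqref{eq:RORPcharunique} and \eqref{eq:Sylsplit2} holds by construction, so the implication (b)$\Rightarrow$(a) of Lemma~\ref{lem:Sylsplit} gives $\Sigma_k P_k S = \tilde{A}_e \Sigma_k P_k + \tilde{B}_e P_k$; since $\Sigma_k = \Sigma_k P_k$, this is the claimed equation.

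For the second assertion, assume $(\tilde{A},\tilde{B},\tilde{C},\tilde{D},\tilde{E},\tilde{F}) \in \Ops$. For existence, Assumption~\ref{ass:pertclass}(c) provides a bounded solution $\Sigma = (\Pi,\Gamma)^T \in \Lin(W_\ga,X_e)$ of $\Sigma S = \tilde{A}_e \Sigma + \tilde{B}_e$ with $\Sigma(W_{\ga+1}) \subset \Dom(\tilde{A}_e)$; Theorem~\ref{thm:Sylprop}(d) yields $\Sigma P_k S = \tilde{A}_e \Sigma P_k + \tilde{B}_e P_k$, and the implication (a)$\Rightarrow$(b) of Lemma~\ref{lem:Sylsplit} shows that $\vec{z}^k := \Gamma \Phi_k$ solves \eqref{eq:RORPcharunique}. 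For uniqueness, given two solutions $\vec{z}_1^k, \vec{z}_2^k$ of \eqref{eq:RORPcharunique}, the first assertion produces two operators $\Sigma_{k,1},\Sigma_{k,2} \in \Lin(W,X_e) \subset \Lin(W_\ga,X_e)$ both solving the Sylvester equation $\Sigma_k S = \tilde{A}_e \Sigma_k + \tilde{B}_e P_k$. Since $\tilde{A}_e$ generates a strongly stable semigroup by Assumption~\ref{ass:pertclass}(b), Theorem~\ref{thm:Sylprop}(a) applied with the driving operator $\tilde{B}_e P_k$ in place of $B_e$ forces $\Sigma_{k,1} = \Sigma_{k,2}$, and comparing $\Gamma$-components evaluated at $\Phi_k$ yields $\vec{z}_1^k = \vec{z}_2^k$.

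The main obstacle is the bookkeeping in the first assertion: defining $\Sigma_k$ precisely enough so that both \eqref{eq:Sylsplit1} and \eqref{eq:Sylsplit2} are seen to hold, and carefully checking the domain inclusion $\ran(\Sigma_k P_k) \subset \Dom(\tilde{C}) \times \Dom(\mc{G}_1)$ needed to invoke Lemma~\ref{lem:Sylsplit}(b). Once this is dispatched, the remainder of the argument is essentially a packaging of Lemma~\ref{lem:Sylsplit} together with the existence and uniqueness parts of Theorem~\ref{thm:Sylprop}.
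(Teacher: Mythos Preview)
Your proposal is correct and follows essentially the same route as the paper: build $\Sigma_k=(\Pi,\Gamma)^T$ from $\vec{z}^k$ via the explicit formula~\eqref{eq:Sylsplit2}, verify the domain inclusion, and invoke the (a)$\Leftrightarrow$(b) equivalence of Lemma~\ref{lem:Sylsplit}; for existence use Assumption~\ref{ass:pertclass}(c) together with Lemma~\ref{lem:Sylsplit} to read off $\vec{z}^k=\Gamma\Phi_k$, and for uniqueness pass back through the first construction and appeal to Theorem~\ref{thm:Sylprop}(a). The only cosmetic difference is that you route the existence argument through Theorem~\ref{thm:Sylprop}(d) and then Lemma~\ref{lem:Sylsplit}(a)$\Rightarrow$(b), whereas the paper goes directly through the (c)$\Rightarrow$(d) part of Lemma~\ref{lem:Sylsplit}; these are equivalent bookkeeping choices.
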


\begin{proof}
  To prove the first part of the lemma, let $k\in\Z$ and let $\vec{z}^k\in \Dom(\mc{G}_1)^{n_k}$ be a solution of~\eqref{eq:RORPcharunique}. Define  $\Pi\in \Lin(W,X)$, $\Gamma\in \Lin(W,Z)$ and $\Sigma = (\Pi,\Gamma)^T$ by
  \eqn{
  \label{eq:RORPcharuniquePGdef}
  \Gamma &=  \sum_{l=1}^{n_k}\iprod{\cdot}{\phi_k^l} z_l^k , \quad
  \Pi =  \sum_{l=1}^{n_k}\iprod{\cdot}{\phi_k^l} \sum_{j=0}^{l-1} (-1)^j R(i\gw_k, \tilde{A}_{-1} )^{j+1} \left( \tilde{B} K z_{l-j}^k + \tilde{E}\phi_k^{l-j} \right) . 
  \hspace{-1ex}
  }
The definitions imply that $\Pi\phi_k^l\in \Dom(\tilde{C})$ and $\Gamma \phi_k^l\in \Dom(\mc{G}_1)$ for all $l\in \List{n_k}$, and thus $\ran(\Sigma P_k) \subset \Dom(\tilde{C})\times \Dom(\mc{G}_1)$.
  For $l\in \List{n_k}$ we have
  $\Gamma \phi_k^l = z_l^k$, which together with the definition of $\Pi$ shows that 
  \eq{
  \Pi \Phi_k &=
  \mathbb{R}(i\gw_k,\tilde{A}_{-1})\left( \tilde{B}K\Gamma \Phi_k + \tilde{E}\Phi_k\right).
  }
  Furthermore, since $\Gamma \Phi_k = \vec{z}^k$, equation~\eqref{eq:RORPcharunique} implies
\eq{
\JG \Gamma \Phi_k 
=\JG \vec{z}^k 
&= \mc{G}_2 \left( \tilde{\mathbb{P}}(i\gw_k) K \vec{z}^k +  \tilde{C} \mathbb{R}(i\gw_k,\tilde{A}) E\Phi_k + F\Phi_k \right)\\
&= \mc{G}_2 \left( \tilde{\mathbb{P}}(i\gw_k) K \Gamma \Phi_k +  \tilde{C} \mathbb{R}(i\gw_k,\tilde{A}) E\Phi_k + F\Phi_k \right).
  } 
  This concludes that $\Sigma$ satisfies~\eqref{eq:Sylsplit}, and thus we have from Lemma~\ref{lem:Sylsplit} that $\Sigma P_k = \Sigma$ is a solution of the Sylvester equation $\Sigma_k S = \tilde{A}_e \Sigma_k + \tilde{B}_eP_k$.

  To prove the second part, assume $(\tilde{A},\tilde{B},\tilde{C},\tilde{D},\tilde{E},\tilde{F})\in \Ops$ and let $k\in\Z$. We have from Assumption~\ref{ass:pertclass} that the Sylvester equation $\Sigma S = \tilde{A}_e \Sigma + \tilde{B}_e$ has a solution $\Sigma\in \Lin(W_\ga,X_e)$. 
If we let $k\in\Z$ and denote $\vec{z}^k = \Gamma \Phi_k\in \Dom(\mc{G}_1)^{n_k}$, then we have from~\eqref{eq:Sylsplit1} and Lemma~\ref{lem:Sylsplit} that $\vec{z}^k$ is the solution of~\eqref{eq:RORPcharunique}.

To prove the uniqueness of the solution, let $\vec{z}^k,\tilde{\vec{z}}^k\in \Dom(\mc{G}_1)^{n_k}$ be two solutions of~\eqref{eq:RORPcharunique}. We can now use formulas~\eqref{eq:RORPcharuniquePGdef} to define operators $\Sigma = (\Pi,\Gamma)^T$ and $\tilde{\Sigma}=(\tilde{\Pi}, \tilde{\Gamma})^T$ corresponding to $\vec{z}^k$ and $\tilde{\vec{z}}^k$, respectively. As in the beginning of this proof, we get that $\Sigma$ and $\tilde{\Sigma}$ are solutions of the Sylvester equation $\Sigma_k = \tilde{A}_e \Sigma_k + \tilde{B}_eP_k$.  However, by Theorem~\ref{thm:Sylprop} the solution of this equation is unique, and we must thus have $\Sigma_k = \tilde{\Sigma}_k$ and, in particular, $\Gamma_k = \tilde{\Gamma}_k$. From the definitions of these operators it is clear that this is only possible if $\vec{z}^k = \tilde{\vec{z}}^k$. This concludes that the solution of~\eqref{eq:RORPcharunique} is unique.
\end{proof}

  \textit{Proof of Theorem~\textup{\ref{thm:Robchareqns}}}.
Let $(\tilde{A},\tilde{B},\tilde{C},\tilde{D},\tilde{E},\tilde{F})\in \Ops$.

  We will first show that robustness of a controller with respect to the given perturbations implies that the equations~\eqref{eq:Robchar} have solutions for all $k\in \Z$.
  The robustness of the controller together with Lemma~\ref{lem:RORP} implies that 
  \begin{subequations}
    \label{eq:RR}
    \eqn{
    \label{eq:RR1}
    \tilde{\Sigma} S &= \tilde{A}_e \tilde{\Sigma} + \tilde{B}_e\\
    \label{eq:RR2}
    0&=\tilde{C}_e \tilde{\Sigma } + \tilde{D}_e
    }
  \end{subequations}
  have a solution $\tilde{\Sigma} = (\tilde{\Pi}, \tilde{\Gamma})^T\in \Lin(W_\ga,X_e)$.
  Let $k\in \Z$.
  We now have from~\eqref{eq:Sylsplit1} and~\eqref{eq:Sylsplit3} in Lemma~\ref{lem:Sylsplit} that the perturbed regulator equations~\eqref{eq:RR} in particular imply
  \eq{
  \JG  \tilde{\Gamma} \Phi_k &= \mc{G}_2\left( \tilde{\mathbb{P}}(i\gw_k)K \tilde{\Gamma} \Phi_k +  \tilde{C} \mathbb{R}(i\gw_k,\tilde{A}) \tilde{E}\Phi_k + \tilde{F}\Phi_k \right)\\
  0 &= \tilde{\mathbb{P}}(i\gw_k )K\tilde{\Gamma} \Phi_k + \tilde{C} \mathbb{R}(i\gw_k,\tilde{A})\tilde{E}\Phi_k+  \tilde{F}\Phi_k .
  }
  If we choose $\vec{z}^k = \Gamma \Phi_k \in \Dom(\mc{G}_1)^{n_k}$, then~\eqref{eq:RobcharPK} follows immediately from the second equation. Furthermore, substituting the second equation into the right-hand side of the first further concludes $\JG \vec{z}^k = 0$, and thus $\vec{z}^k$ is the solution of the equations~\eqref{eq:Robchar}. Since $k\in \Z$ was arbitrary, this concludes the first part of the proof.

  Now assume that for all $k\in \Z$ equations~\eqref{eq:Robchar} have solutions $\vec{z}^k = (z_{n_k}^k,\ldots, z_1^k)^T\in \Dom(\mc{G}_1)^{n_k}$.
  Define operators $\Pi: \Dom(\Pi)\subset  W_\ga \rightarrow X$, $\Gamma : \Dom(\Gamma)\subset  W_\ga \rightarrow Z$, and $\Sigma : \Dom(\Sigma)\subset W_\ga \rightarrow X_e$ by
  \eq{
  \Gamma = \sum_{k\in\Z} \sum_{l=1}^{n_k}\iprod{\cdot}{\phi_k^l} z_l^k , \quad 
\Pi = \sum_{k\in\Z} \sum_{l=1}^{n_k}\iprod{\cdot}{\phi_k^l} \sum_{j=0}^{l-1} (-1)^j R(i\gw_k, \tilde{A}_{-1} )^{j+1} \left( \tilde{B} K z_{l-j}^k + \tilde{E}\phi_k^{l-j} \right)  
  } 
  and $\Sigma = (\Pi,\Gamma)^T$. 
  We will show that $\Sigma$ is the solution of the perturbed Sylvester equation~\eqref{eq:RR1}, and that it satisfies the regulation constraint~\eqref{eq:RR2}.

  Let $k\in\Z$.  
  For all $l\in \List{n_k}$ we have
  $\Gamma \phi_k^l = z_l^k \in \Dom(\mc{G}_1)$, which together with the definition of $\Pi$ implies that $\ran(\Pi P_k)\subset \Dom(\tilde{C})$ and that~\eqref{eq:Sylsplit2} is satisfied.
Furthermore, since $\Gamma \Phi_k = \vec{z}^k$, we have from~\eqref{eq:Robchar} that
\eq{
\JG \Gamma \Phi_k 
=\JG \vec{z}^k = 0 
&= \mc{G}_2 \left( \tilde{\mathbb{P}}(i\gw_k) K \vec{z}^k +  \tilde{C} \mathbb{R}(i\gw_k,\tilde{A}) E\Phi_k + F\Phi_k \right)\\
&= \mc{G}_2 \left( \tilde{\mathbb{P}}(i\gw_k) K \Gamma \Phi_k +  \tilde{C} \mathbb{R}(i\gw_k,\tilde{A}) E\Phi_k + F\Phi_k \right),
  } 
  which is precisely~\eqref{eq:Sylsplit1}. 
  Since $(\tilde{A},\tilde{B},\tilde{C},\tilde{D},\tilde{E},\tilde{F})\in \Ops$, we now have from the second part of Lemma~\ref{lem:Sylsplit} that $\Sigma \in \Lin(W_\ga,X_e)$ and it is the solution of the Sylvester equation $\Sigma S = \tilde{A}_e \Sigma + \tilde{B}_e$.
 Finally,  Lemma~\ref{lem:Sylsplit} and equation~\eqref{eq:RobcharPK} imply that 
  \eq{
  \tilde{C}_e\Sigma \Phi_k + \tilde{D}_e \Phi_k 
  &= \tilde{\mathbb{P}}(i\gw_k )K\Gamma \Phi_k + \tilde{C} \mathbb{R}(i\gw_k,\tilde{A})\tilde{E}\Phi_k+  \tilde{F}\Phi_k \\
  &= \tilde{\mathbb{P}}(i\gw_k )K \vec{z}^k + \tilde{C} \mathbb{R}(i\gw_k,\tilde{A})\tilde{E}\Phi_k+  \tilde{F}\Phi_k 
  =0.
  }
  Since $k\in\Z$ was arbitrary and $\tilde{C}_e\Sigma + \tilde{D}_e\in \Lin(W_{\ga+1},X_e)$ by Lemma~\ref{lem:CSigmarelbdd}, we have $\tilde{C}_e\Sigma + \tilde{D}_e=0$ on $W_{\ga+1}$.
  Thus $\Sigma$ is a solution of the perturbed regulator equations, and by Lemma~\ref{lem:RORP} the controller is robust with respect to the perturbations $(\tilde{A},\tilde{B},\tilde{C},\tilde{D},\tilde{E},\tilde{F})$.

  It remains to prove the uniqueness of the solution of~\eqref{eq:Robchar}. If $\vec{z}^k$ is the solution of the equations~\eqref{eq:Robchar}, then it is also clearly a solution of the equation~\eqref{eq:RORPcharunique}.
  By Lemma~\ref{lem:RORPcharunique} the solution of this equation is unique, and therefore the same is also true for the solution of~\eqref{eq:Robchar}.
  $\Box$

From Theorem~\ref{thm:Robchareqns} and Lemma~\ref{lem:RORPcharunique} we get the following corollary. This will be helpful in characterizing the robustness of a controller through the \Gconds.

\begin{corollary}
  \label{cor:IMSindfreq}
  Assume $(\tilde{A},\tilde{B},\tilde{C},\tilde{D},\tilde{E},\tilde{F})\in \Ops$. The controller is robust with respect to the perturbations if and only if for every $k\in \Z$ the unique solution $\vec{z}^k\in \Dom(\mc{G}_1)^{n_k}$ of the equation
\begin{subequations}
  \label{eq:IMSfreq}
  \eqn{
  \label{eq:IMSfreq1}
  \JG \vec{z}^k = \mc{G}_2 \left( \tilde{\mathbb{P}}(i\gw_k)K\vec{z}^k + \tilde{C}\mathbb{R}(i\gw_k,\tilde{A})\tilde{E}\Phi_k + \tilde{F}\Phi_k \right)
  }
  satisfies 
  \eqn{
  \label{eq:IMSfreq2}
  \tilde{\mathbb{P}}(i\gw_k)K\vec{z}^k + \tilde{C}\mathbb{R}(i\gw_k,\tilde{A})\tilde{E}\Phi_k + \tilde{F}\Phi_k =0.
  }
\end{subequations}
\end{corollary}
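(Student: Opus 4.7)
The plan is to derive this corollary as a straightforward consequence of Theorem~\ref{thm:Robchareqns} (which characterizes robustness through solvability of the two equations~\eqref{eq:Robchar}) together with Lemma~\ref{lem:RORPcharunique} (which asserts that, under the standing assumption $(\tilde A,\tilde B,\tilde C,\tilde D,\tilde E,\tilde F)\in\Ops$, equation~\eqref{eq:IMSfreq1} admits a unique solution $\vec z^k\in\Dom(\mc G_1)^{n_k}$ for every $k\in\Z$). Fixing $k\in\Z$, I would work entirely at a single frequency and prove the equivalence between (i) solvability of the pair \eqref{eq:Robchar} and (ii) the unique solution of \eqref{eq:IMSfreq1} satisfying~\eqref{eq:IMSfreq2}; the full statement then follows by ranging over $k\in\Z$.

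For the forward implication, suppose $\vec z^k\in\Dom(\mc G_1)^{n_k}$ solves~\eqref{eq:Robchar}. Then \eqref{eq:RobcharPK} is literally~\eqref{eq:IMSfreq2}, and substituting it into the right-hand side of~\eqref{eq:IMSfreq1} produces $\mc G_2\cdot 0=0$, while the left-hand side of~\eqref{eq:IMSfreq1} equals $\JG\vec z^k=0$ by~\eqref{eq:RobcharJG}. Hence $\vec z^k$ solves~\eqref{eq:IMSfreq1}, and by the uniqueness part of Lemma~\ref{lem:RORPcharunique} it is the unique solution of~\eqref{eq:IMSfreq1}; that same $\vec z^k$ already satisfies~\eqref{eq:IMSfreq2} by construction.

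For the converse, let $\vec z^k$ be the unique solution of~\eqref{eq:IMSfreq1} provided by Lemma~\ref{lem:RORPcharunique}, and assume it satisfies~\eqref{eq:IMSfreq2}. Then the parenthesized expression on the right-hand side of~\eqref{eq:IMSfreq1} vanishes, so~\eqref{eq:IMSfreq1} reduces to $\JG\vec z^k=0$, which is~\eqref{eq:RobcharJG}. Combined with~\eqref{eq:IMSfreq2}, which rearranges to~\eqref{eq:RobcharPK}, we obtain a solution of~\eqref{eq:Robchar} at frequency $k$. Applying this equivalence at every $k\in\Z$ and invoking Theorem~\ref{thm:Robchareqns} yields the desired characterization of robustness.

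No genuine obstacle is expected here: the work was already done in Theorem~\ref{thm:Robchareqns} and Lemma~\ref{lem:RORPcharunique}. The only point requiring a moment's care is to check that the two equations in~\eqref{eq:Robchar} can indeed be recombined into the single fixed-point equation~\eqref{eq:IMSfreq1}, which is immediate from the shape of~\eqref{eq:IMSfreq1} once one notes that the bracketed quantity in~\eqref{eq:IMSfreq1} is exactly the quantity set to zero in~\eqref{eq:RobcharPK}.
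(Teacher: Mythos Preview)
Your proposal is correct and follows essentially the same approach as the paper, which simply states that the corollary follows from Theorem~\ref{thm:Robchareqns} and Lemma~\ref{lem:RORPcharunique} without spelling out the details. Your argument makes explicit the elementary observation that a vector $\vec z^k$ solves the pair~\eqref{eq:Robchar} if and only if it solves~\eqref{eq:IMSfreq1} with the bracketed term vanishing, and then invokes uniqueness from Lemma~\ref{lem:RORPcharunique}; this is exactly the intended reasoning.
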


\section{The $p$-Copy Internal Model Principle}
\label{sec:pcopy}

In this section we show that a controller stabilizing the closed-loop system solves the robust output regulation problem if and only if it incorporates a p-copy internal model of the exosystem. The `$p$' in the term refers to the dimension of the output space, i.e., $p=\dim Y$. The significance of $p$ is that the classical definition of the finite-dimensional internal model states roughly that ``for any Jordan block of $S$ associated to an eigenvalue $s$, the matrix $\mc{G}_1$ must have at least $p$ Jordan blocks of greater or equal size associated to $s$''.
For infinite-dimensional feedback controllers the p-copy internal model can be defined as shown below~\cite{PauPoh10}. The definition of the p-copy is meaningful only in the case of a finite-dimensional output space $Y$.

\begin{definition}[The p-copy internal model]
  \label{def:pcopy}
Assume $\dim Y<\infty$. A controller~$(\mc{G}_1,\mc{G}_2,K)$ is said to\/ {\em incorporate a p-copy internal model} of the exosystem $S$ if for all $k\in\Z$ we 
have
\eq{
\dim\ker(i\gw_k-\mc{G}_1)\geq\dim\, Y
}
and\/ $\mc{G}_1$ has at least\/ $\dim Y$ independent Jordan chains of length greater than or equal to~$n_k$ associated to the eigenvalue $i\gw_k$.
\end{definition}

The following theorem is the main result of this section.

\begin{theorem}
  \label{thm:IMP}
  Assume that $\dim Y<\infty$, the controller $(\mc{G}_1,\mc{G}_2,K)$ stabilizes the closed-loop system strongly, $i\gw_k\in \rho(A_e)$ for all $k\in\Z$, and the Sylvester equation $\Sigma S = A_e\Sigma + B_e$ has a solution $\Sigma\in \Lin(W_\ga,X_e)$. Then the controller solves the robust output regulation problem on $W_\ga$ if and only if it incorporates a p-copy internal model of the exosystem.
\end{theorem}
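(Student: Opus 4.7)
My strategy is to route everything through the robustness characterization of Theorem~\ref{thm:Robchareqns} and to exploit the spectrum condition $i\gw_k\in\rho(A_e)$ to turn equations~\eqref{eq:Robchar} into a finite-dimensional question on the kernel $\mc{K}_k:=\ker\JG$. By Theorem~\ref{thm:Robchareqns}, robustness is equivalent to the solvability, for every admissible perturbation and every $k\in\Z$, of
\[
\tilde{\Psi}_k \vec{z}^k=-\tilde{C}\mathbb{R}(i\gw_k,\tilde{A})\tilde{E}\Phi_k-\tilde{F}\Phi_k,
\]
where $\tilde{\Psi}_k:\mc{K}_k\to Y^{n_k}$, $\vec{z}\mapsto\tilde{\mathbb{P}}(i\gw_k)K\vec{z}$. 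The plan is to use the stability hypothesis to make $\tilde{\Psi}_k$ injective and the p-copy hypothesis (or its failure) to control $\dim\mc{K}_k$, and thereby pin down exactly when $\tilde{\Psi}_k$ is bijective onto the $n_k p$-dimensional space $Y^{n_k}$.

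The crucial preliminary step is to show that $i\gw_k\in\rho(\tilde{A}_e)$ makes $\tilde{\Psi}_k$ injective. Unpacking $(\tilde{A}_e-i\gw_k)(x,z)^T=0$ on the maximal domain and eliminating the state via $x=R(i\gw_k,\tilde{A}_{-1})\tilde{B}Kz$ shows that $(i\gw_k-\mc{G}_1)-\mc{G}_2\tilde{P}(i\gw_k)K$ is injective on $\Dom(\mc{G}_1)$. For $\vec{z}=(z_{n_k},\ldots,z_1)^T\in\mc{K}_k$ with $\tilde{\mathbb{P}}(i\gw_k)K\vec{z}=0$, the bottom component $(i\gw_k-\mc{G}_1)z_1=0$ of $\JG\vec{z}=0$ combines with the bottom component $\tilde{P}(i\gw_k)Kz_1=0$ of $\tilde{\mathbb{P}}(i\gw_k)K\vec{z}=0$ into $((i\gw_k-\mc{G}_1)-\mc{G}_2\tilde{P}(i\gw_k)K)z_1=0$, forcing $z_1=0$; the next block collapses to the same scalar injectivity for $z_2$, and so on inductively up the chain. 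Hence $\tilde{\Psi}_k$ is injective and $\dim\mc{K}_k\leq n_k p$.

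For the ``if'' direction, the p-copy internal model provides, for each $k\in\Z$, at least $p$ independent Jordan chains $\{e_l^{(i)}\}_{l=1}^{n_k}$ of $\mc{G}_1$ of length $\geq n_k$ at $i\gw_k$, and the $n_k p$ vectors $(e_{n_k-j+1}^{(i)},\ldots,e_1^{(i)},0,\ldots,0)^T$, $i\in\List[1]{p}$, $j\in\List[1]{n_k}$, are linearly independent in $\mc{K}_k$, so $\dim\mc{K}_k\geq n_k p$. Combined with the upper bound, the injective $\tilde{\Psi}_k$ is a bijection onto $Y^{n_k}$ for every perturbation in $\Ops$, and robustness follows from Theorem~\ref{thm:Robchareqns}. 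For the converse I would restrict attention to perturbations that modify $\tilde{F}$ only on the finite-dimensional subspace $P_k W$; such perturbations leave $\tilde{A}_e=A_e$ and $\tilde{C}_e=C_e$ unchanged, and the Sylvester condition in Assumption~\ref{ass:pertclass}(c) is repaired by a finite-rank correction to the nominal $\Sigma$, so they belong to $\Ops$. As $\tilde{F}\Phi_k$ then ranges over all of $Y^{n_k}$, the nominal $\Psi_k$ is forced to be surjective; projecting onto the bottom block and using that $\mathbb{P}(i\gw_k)$ is block upper triangular with $P(i\gw_k)$ on the diagonal, $P(i\gw_k)K$ must send the subspace of eigenvectors heading Jordan chains of length $\geq n_k$ onto the $p$-dimensional space $Y$, producing the required $p$ chains.

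The principal obstacle will be the injectivity step: the unbounded $B$, $C$, and $K$ combined with the maximal-domain definition of $\tilde{A}_e$ require careful bookkeeping of which manipulations live in $X$, in $X_{-1}$, or in $\Dom(\tilde{A}_e)$, particularly when invoking the identity $x=R(i\gw_k,\tilde{A}_{-1})\tilde{B}Kz$ that couples $X_{-1}$-valued expressions with the controller-side domain constraints. Once this is handled the remaining dimension counts are essentially finite-dimensional and immune to the operator-theoretic subtleties.
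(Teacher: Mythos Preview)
Your proposal is correct and follows the paper's overall strategy: both route through Theorem~\ref{thm:Robchareqns}, establish injectivity of $(\tilde{\mathbb{P}}(i\gw_k)K)\vert_{\ker\JG}$ from $i\gw_k\notin\gs_p(\tilde{A}_e)$ (the paper's Lemmas~\ref{lem:PKinj} and~\ref{lem:PKmatinj}), and for the converse perturb only $\tilde{E}$ and $\tilde{F}$ on a single block $P_k W$ to force surjectivity of the nominal map (the paper's Lemma~\ref{lem:PKmatsurjtopcopy}, with membership in $\Ops$ verified via Theorem~\ref{thm:Sylprop}(b) rather than your finite-rank correction, though both work). The one genuine difference is your ``if'' direction: you argue by a dimension count, bounding $\dim\ker\JG$ above by $n_k p$ via injectivity and below by $n_k p$ via the $n_k p$ shifted-chain vectors that the p-copy hypothesis supplies, so that the injective $\tilde{\Psi}_k$ between equidimensional finite-dimensional spaces is automatically bijective. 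The paper instead constructs preimages explicitly (Lemma~\ref{lem:PKmatsurj}), invoking the inclusion $\ker(i\gw_k-\mc{G}_1)^{n_k-1}\subset\ran(i\gw_k-\mc{G}_1)$ from \cite[Lem.~6.8]{PauPoh10} together with the bijectivity of $(\tilde{P}(i\gw_k)K)\vert_{\ker(i\gw_k-\mc{G}_1)}$ to build $z_1,\ldots,z_{n_k}$ iteratively. Your dimension argument is shorter and self-contained; the paper's construction has the merit of producing the solution $\vec{z}^k$ explicitly and of exhibiting the structural fact about generalized eigenspaces that underlies the p-copy condition.
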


As a by-product of the proof of Theorem~\ref{thm:IMP}, we obtain a new way of defining an ``internal model'' of the exosystem for infinite-dimensional controllers. This definition can be given in a compact form using the properties of the operator
  \eqn{
  \label{eq:pcopyvsPKmatinv}
  (\tilde{\mathbb{P}}(i\gw_k)K)\vert_{\ker(\JG)}: \ker(\JG)\subset Z^{n_k}\rightarrow Y^{n_k},
  }
  i.e., the restriction of the operator $\tilde{\mathbb{P}}(i\gw_k)K$ to the subspace $\ker(\JG)$.
  The following theorem shows that the invertibility of the above operator is equivalent to the controller incorporating an internal model of the exosystem in the sense of Definition~\ref{def:pcopy}. The theorem generalizes the results in~\cite[Sec.~6]{PauPoh10}, where it was shown that for a diagonal exosystem the invertibility of the operators $(P(i\gw_k)K)\vert_{\ker(i\gw_k-\mc{G}_1)}$ for all frequencies $i\gw_k$ is equivalent to the controller incorporating an internal model.

\begin{theorem}
  \label{thm:pcopyvsPKmatinv}
  Assume $\dim Y <\infty$.
 
  If there exist $(\tilde{A},\tilde{B}, \tilde{C}, \tilde{D},\tilde{E},\tilde{F})\in \Ops$ such that the operator in~\eqref{eq:pcopyvsPKmatinv}
  is surjective for all $k\in \Z$, then the controller incorporates a p-copy internal model of the exosystem.

  Conversely, if the controller incorporates a p-copy internal model of the exosystem and if $(\tilde{A},\tilde{B}, \tilde{C}, \tilde{D},\tilde{E},\tilde{F})\in \Ops$, then the operator~\eqref{eq:pcopyvsPKmatinv} is boundedly invertible for all $k\in \Z$.
\end{theorem}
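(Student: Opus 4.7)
The plan is to handle the two implications separately. The forward direction is a direct surjectivity argument exploiting the block upper triangular structure of $\tilde{\mathbb{P}}(i\gw_k)$, while the converse reduces, via strong stability of $\tilde{A}_e$, to an injectivity claim and then to a dimension count that forces bijectivity.

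For the forward direction, fix $k \in \Z$. Because $\tilde{\mathbb{P}}(i\gw_k)$ is block upper triangular with copies of $\tilde{P}(i\gw_k)$ on the diagonal, the bottom component of $\tilde{\mathbb{P}}(i\gw_k)K\vec{z}$ equals simply $\tilde{P}(i\gw_k)K z_1$. For $\vec{z} = (z_{n_k}, \ldots, z_1)^T \in \ker(\JG)$, reading the rows of $\JG$ gives $z_1 = (\mc{G}_1 - i\gw_k)^{n_k - 1} z_{n_k}$ with $z_{n_k} \in \ker((\mc{G}_1 - i\gw_k)^{n_k})$, so $z_1$ ranges precisely over the span of those eigenvectors of $\mc{G}_1$ that appear at the bottom of a Jordan chain of length at least $n_k$ associated to $i\gw_k$. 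Surjectivity of $(\tilde{\mathbb{P}}(i\gw_k)K)\vert_{\ker(\JG)}$ onto $Y^{n_k}$ forces the induced map $\tilde{P}(i\gw_k)K$ from this span into $Y$ to be surjective, so the span has dimension at least $p = \dim Y$. This yields at least $p$ independent Jordan chains of length $\geq n_k$, and in particular $\dim\ker(i\gw_k - \mc{G}_1) \geq p$; since $k$ was arbitrary, the p-copy internal model follows.

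For the converse, assume the p-copy IM together with $(\tilde{A},\tilde{B},\tilde{C},\tilde{D},\tilde{E},\tilde{F}) \in \Ops$ and fix $k \in \Z$. Given $\vec{z} \in \ker(\JG)$ with $\tilde{\mathbb{P}}(i\gw_k)K\vec{z} = 0$, I would define
\[
x_j = \sum_{l=0}^{j-1} (-1)^l R(i\gw_k,\tilde{A}_{-1})^{l+1} \tilde{B} K z_{j-l}, \qquad j = 1,\ldots,n_k,
\]
and verify, using $\tilde{A}_{-1} R(i\gw_k,\tilde{A}_{-1}) = i\gw_k R(i\gw_k,\tilde{A}_{-1}) - I$ together with the Jordan chain relations $(\mc{G}_1 - i\gw_k)z_l = z_{l-1}$, that each $(x_j, z_j) \in \Dom(\tilde{A}_e)$ and that $(\tilde{A}_e - i\gw_k)(x_j,z_j)^T - (x_{j-1},z_{j-1})^T$ reduces to $(0, \mc{G}_2 [\tilde{\mathbb{P}}(i\gw_k)K\vec{z}]_{n_k - j + 1})^T$, which vanishes by hypothesis (with the convention $(x_0, z_0) = (0,0)$). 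Thus $\{(x_j, z_j)\}_{j=1}^{n_k}$ is a Jordan chain of $\tilde{A}_e$ at the imaginary point $i\gw_k$, and on the invariant finite-dimensional subspace it spans the orbit admits the closed form
\[
T_e(t)(x_{n_k}, z_{n_k})^T = e^{i\gw_k t} \sum_{l=0}^{n_k - 1} \frac{t^l}{l!}(x_{n_k-l}, z_{n_k-l})^T.
\]
Strong stability of $T_e(t)$ forces the successive leading coefficients $(x_1, z_1), (x_2, z_2), \ldots$ to vanish one after another, so every $z_j = 0$ and $\vec{z} = 0$. Combined with the identity $\dim\ker(\JG) = \dim\ker((\mc{G}_1 - i\gw_k)^{n_k}) \geq p n_k$ supplied by the p-copy IM, this injectivity forces $\dim\ker(\JG) = p n_k = \dim Y^{n_k}$, and an injective linear map between finite-dimensional spaces of equal dimension is automatically a bijection with bounded inverse.

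The main technical hurdle is the row-by-row identification between the recursive formula for $x_j$ and the components of $\tilde{\mathbb{P}}(i\gw_k)K\vec{z}$. One must check $x_j \in \Dom(\tilde{C})$ (which follows from $\ran(R(i\gw_k,\tilde{A}_{-1})\tilde{B}) \subset \Dom(\tilde{C})$ in Assumption~\ref{ass:pertclass}(a) combined with $\tilde{C} \in \Lin(\tilde{X}_1, Y)$), then $\tilde{A}_{-1}x_j + \tilde{B}Kz_j \in X$ so that $(x_j, z_j) \in \Dom(\tilde{A}_e)$, which after telescoping equals $i\gw_k x_j + x_{j-1}$, and finally that the $Z$-component of $(\tilde{A}_e - i\gw_k)(x_j,z_j)^T$ aligns exactly with the $(n_k - j + 1)$-th component of $\tilde{\mathbb{P}}(i\gw_k)K\vec{z}$, which comes out of the alternating signs and powers of $R(i\gw_k,\tilde{A}_{-1})$ built into the definition of $\tilde{\mathbb{P}}(i\gw_k)$. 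Once this bookkeeping is settled, the strong-stability conclusion and the dimension count are both routine.
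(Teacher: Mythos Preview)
Your proposal is correct. The forward direction is essentially the paper's own argument (Lemma~\ref{lem:PKmatsurjtopcopy}): pick preimages of $(0,\ldots,0,e_l)^T$, read off $\tilde{P}(i\gw_k)Kz_1^l=e_l$ from the bottom row, and conclude that the $z_1^l$ start $p$ independent Jordan chains of length $n_k$.

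For the converse, however, you take a genuinely different route from the paper. The paper separates injectivity and surjectivity into Lemmas~\ref{lem:PKmatinj} and~\ref{lem:PKmatsurj}. Injectivity is obtained by first proving (Lemma~\ref{lem:PKinj}) that $(\tilde{P}(i\gw_k)K)\vert_{\ker(i\gw_k-\mc{G}_1)}$ is injective whenever $i\gw_k\notin\gs_p(\tilde{A}_e)$, and then peeling off the rows of $\tilde{\mathbb{P}}(i\gw_k)K\vec{z}=0$ one at a time. Your Jordan-chain-in-$\tilde{A}_e$ construction does the same thing in one stroke; it is really the same mechanism, just packaged globally rather than inductively. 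The real difference is in surjectivity: the paper proves it \emph{constructively} (Lemma~\ref{lem:PKmatsurj}), first using rank--nullity to force $\dim\ker(i\gw_k-\mc{G}_1)=p$, then invoking \cite[Lem.~6.8]{PauPoh10} to obtain $\ker(i\gw_k-\mc{G}_1)^{n_k-1}\subset\ran(i\gw_k-\mc{G}_1)$, and finally building a preimage of an arbitrary $\vec{y}\in Y^{n_k}$ row by row. You bypass all of this with a dimension count: the isomorphism $\ker(\JG)\cong\ker((\mc{G}_1-i\gw_k)^{n_k})$ together with the $p$ independent chains of length $\ge n_k$ gives $\dim\ker(\JG)\ge pn_k$, while injectivity into $Y^{n_k}$ gives $\le pn_k$, so the restriction is a bijection between finite-dimensional spaces. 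Your argument is shorter, self-contained (no appeal to \cite{PauPoh10}), and makes transparent why $\dim\ker(i\gw_k-\mc{G}_1)$ is forced to equal $p$ exactly. The paper's construction, on the other hand, produces an explicit preimage, which is what is later exploited in the proof of Theorem~\ref{thm:IMP}; with your approach that preimage exists abstractly, which is all that proof actually needs.
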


\begin{remark}
  The conclusions of Theorem~\textup{\ref{thm:pcopyvsPKmatinv}} are in particular true for the unperturbed operators $(A,B,C,D,E,F)$.
\end{remark}

The proof of Theorem~\ref{thm:pcopyvsPKmatinv} is based on the following four lemmas. Lemma~\ref{lem:PKinj} was first introduced 
in~\cite{PauPoh10}
for the transfer function $P(\gl)$ of the unperturbed plant.

\begin{lemma}
  \label{lem:PKinj}
  Let
  $k\in \Z$.
If $i\gw_k \in \rho(\tilde{A})$ and $i\gw_k\notin \gs_p(\tilde{A}_e)$,
  then $(\tilde{P}(i\gw_k)K)\vert_{\ker(i\gw_k-\mc{G}_1)}$ is injective.
\end{lemma}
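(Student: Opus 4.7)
The plan is to argue by contradiction through the construction of an eigenvector of $\tilde{A}_e$ at $i\gw_k$. So let $z \in \ker(i\gw_k - \mc{G}_1)$ with $\tilde{P}(i\gw_k) K z = 0$, and I want to show $z = 0$.

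Since $i\gw_k \in \rho(\tilde{A})$, first I would set $x := R(i\gw_k, \tilde{A}_{-1}) \tilde{B} K z$. The standing assumption $\ran(R(i\gw_k,\tilde{A}_{-1})\tilde{B}) \subset \Dom(\tilde{C})$ guarantees $x \in \Dom(\tilde{C}) \subset X$ and that $\tilde{C} x = \tilde{C} R(i\gw_k, \tilde{A}_{-1}) \tilde{B} K z$ makes sense. From the definition of $x$ we read off $\tilde{A}_{-1} x + \tilde{B} K z = i\gw_k x$, which in particular lies in $X$. Together with $z \in \Dom(\mc{G}_1)$, this places the pair $(x,z)^T$ inside the maximal domain $\Dom(\tilde{A}_e)$.

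Next I would compute $\tilde{A}_e (x,z)^T$ using the block form. The first component is $\tilde{A}_{-1} x + \tilde{B} K z = i\gw_k x$ as above. The second component is
\begin{align*}
\mc{G}_2 \tilde{C} x + (\mc{G}_1 + \mc{G}_2 \tilde{D} K) z
&= i\gw_k z + \mc{G}_2 \bigl( \tilde{C} R(i\gw_k,\tilde{A}_{-1}) \tilde{B} + \tilde{D} \bigr) K z \\
&= i\gw_k z + \mc{G}_2 \tilde{P}(i\gw_k) K z = i\gw_k z,
\end{align*}
using $\mc{G}_1 z = i\gw_k z$ and $\tilde{P}(i\gw_k) K z = 0$. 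Hence $\tilde{A}_e (x,z)^T = i\gw_k (x,z)^T$. The hypothesis $i\gw_k \notin \gs_p(\tilde{A}_e)$ then forces $(x,z)^T = 0$, so in particular $z = 0$, proving injectivity.

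There isn't really a hard step here; the only delicate point is the domain bookkeeping, i.e., verifying that $(x,z)^T$ actually belongs to $\Dom(\tilde{A}_e)$ as defined with maximal domain, which rests on the condition $\ran(R(i\gw_k,\tilde{A}_{-1})\tilde{B}) \subset \Dom(\tilde{C})$ so that $\tilde{C} x$ is meaningful and on $\tilde{A}_{-1} x + \tilde{B} K z \in X$ by construction. Once these are in place, the computation is direct.
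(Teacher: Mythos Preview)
Your proof is correct and essentially identical to the paper's own argument: both choose $x = R(i\gw_k,\tilde{A}_{-1})\tilde{B}Kz$, verify $(x,z)^T\in\Dom(\tilde{A}_e)$, and use $i\gw_k\notin\gs_p(\tilde{A}_e)$ to force $(x,z)^T=0$. The only cosmetic difference is that the paper computes $(i\gw_k-\tilde{A}_e)(x,z)^T=0$ directly in $X_{-1}\times Z$, whereas you compute $\tilde{A}_e(x,z)^T=i\gw_k(x,z)^T$; your explicit domain bookkeeping is, if anything, slightly more careful.
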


\begin{proof}
  Let  $z\in\ker(i\gw_k -\mc{G}_1)$ be such that $\tilde{P}(i\gw_k)Kz=0$. 
  Since $i\gw_k \in \rho(\tilde{A}) = \rho(\tilde{A}_{-1})$, 
we can choose \mbox{$x=R(i\gw_k,\tilde{A}_{-1})\tilde{B}Kz\in\Dom(\tilde{C})$}. On $X_{-1}\times Z$ we have
\eq{
\MoveEqLeft\pmat{(i\gw_k -\tilde{A}_{-1})x-\tilde{B}Kz\\-\mc{G}_2\tilde{C}x+(i\gw_k-\mc{G}_1)z -\mc{G}_2\tilde{D}Kz}
=  \pmat{\tilde{B}Kz-\tilde{B}Kz\\-\mc{G}_2(\tilde{C}R(i\gw_k,\tilde{A}_{-1})\tilde{B}+\tilde{D})Kz+(i\gw_k-\mc{G}_1)z} \\[1ex]
&= \pmat{0\\- \mc{G}_2 \tilde{P}(i\gw_k)K z}
=\pmat{0\\0} \in X\times Z.
}
This shows that $(x,z)^T\in \Dom(i\gw_k - \tilde{A}_e)$ and $(i\gw_k - \tilde{A}_e)\left( {x\atop z} \right)=\left( {0\atop 0} \right)$.
Since $i\gw_k\notin\gs_p(\tilde{A}_e)$, we know that $i\gw_k-\tilde{A}_e$ is injective. 
This in particular implies $z=0$, which concludes that the restriction of $\tilde{P}(i\gw_k)K$ to \mbox{$\ker(i\gw_k-\mc{G}_1)$} is injective. 
\end{proof}

\begin{lemma}
  \label{lem:PKmatinj}
  If $(\tilde{A},\tilde{B}, \tilde{C}, \tilde{D}, \tilde{E}, \tilde{F})\in \Ops$,
 then $(\tilde{\mathbb{P}}(i\gw_k)K)\vert_{\ker(\JG)}$ is injective for all $k\in\Z$.
\end{lemma}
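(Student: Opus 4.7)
The plan is to extend the argument of Lemma~\ref{lem:PKinj} from single eigenvectors to Jordan chains. Given $\vec z = (z_{n_k},\ldots,z_1)^T \in \Dom(\mc G_1)^{n_k}$ with $\JG \vec z = 0$ and $\tilde{\mathbb P}(i\gw_k) K \vec z = 0$, I will construct a Jordan chain of $\tilde A_e$ at $i\gw_k$ out of $\vec z$ and then invoke that the strong stability of $\tilde A_e$ (Assumption~\ref{ass:pertclass}(b)) precludes any eigenvalue of $\tilde A_e$ on the imaginary axis.

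First I would unpack the two kernel hypotheses. The condition $\JG \vec z = 0$ says exactly that $(z_1,\ldots,z_{n_k})$ is a Jordan chain of $\mc G_1$ at $i\gw_k$, i.e.\ $(i\gw_k - \mc G_1)z_1 = 0$ and $(\mc G_1 - i\gw_k)z_l = z_{l-1}$ for $l \geq 2$. For $l = 1,\ldots,n_k$ I set
\eq{
x_l := \sum_{j=0}^{l-1} (-1)^j R(i\gw_k,\tilde A_{-1})^{j+1} \tilde B K z_{l-j},
}
each of which lies in $\Dom(\tilde C)$: the $j=0$ term does so by Assumption~\ref{ass:pertclass}(a), and the $j \geq 1$ terms factor through $R(i\gw_k,\tilde A)^{j}$, which maps $X$ into $\Dom(\tilde A) \subset \tilde X_1 \subset \Dom(\tilde C)$. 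Reading off the $(n_k - l + 1)$-th block row of $\tilde{\mathbb P}(i\gw_k) K \vec z$ from the upper triangular form of $\mathbb R(i\gw_k,\tilde A_{-1})$ and the diagonal $\blockop{\tilde D}$, the hypothesis $\tilde{\mathbb P}(i\gw_k) K \vec z = 0$ is equivalent to $\tilde C x_l + \tilde D K z_l = 0$ for every $l = 1,\ldots,n_k$.

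Next I verify that the pairs $(x_l, z_l)^T$ form a Jordan chain of $\tilde A_e$ at $i\gw_k$. A telescoping cancellation in the sum defining $x_l$ (using $(\tilde A_{-1} - i\gw_k)R(i\gw_k,\tilde A_{-1}) = -I$) yields
\eq{
(\tilde A_{-1} - i\gw_k) x_l + \tilde B K z_l = x_{l-1},
}
with the convention $x_0 := 0$; since $x_{l-1} \in X$, the maximal-domain condition for $\tilde A_e$ is satisfied together with $x_l \in \Dom(\tilde C)$ and $z_l \in \Dom(\mc G_1)$. The second block component of $(\tilde A_e - i\gw_k)(x_l,z_l)^T$ equals $\mc G_2(\tilde C x_l + \tilde D K z_l) + (\mc G_1 - i\gw_k)z_l$, whose first summand vanishes by the identity derived above and whose second equals $z_{l-1}$ for $l \geq 2$ and $0$ for $l = 1$ from the Jordan chain for $\mc G_1$. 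Hence $(\tilde A_e - i\gw_k)(x_1,z_1)^T = 0$ and $(\tilde A_e - i\gw_k)(x_l,z_l)^T = (x_{l-1},z_{l-1})^T$ for $l \geq 2$.

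Finally, any eigenfunction of $\tilde A_e$ at $i\gw_k$ would give an orbit of constant norm under the semigroup, contradicting strong stability; thus $(x_1,z_1)^T = 0$. Inductively, $(x_l,z_l)^T$ then solves $(\tilde A_e - i\gw_k)(x_l,z_l)^T = 0$ and is itself zero for every $l$. In particular $\vec z = 0$, which is the claimed injectivity. The main bookkeeping step will be the telescoping identity together with the row-by-row decomposition of $\tilde{\mathbb P}(i\gw_k) K \vec z$ in terms of the $x_l$; once these are in place, the proof follows the pattern of Lemma~\ref{lem:PKinj} applied recursively along the chain.
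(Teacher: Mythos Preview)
Your argument is correct. Both the paper's proof and yours ultimately rest on the fact that strong stability of the perturbed closed-loop semigroup forbids point spectrum of $\tilde A_e$ on $i\R$, but the two proofs are organised differently. The paper first isolates the scalar case in Lemma~\ref{lem:PKinj} (injectivity of $(\tilde P(i\gw_k)K)\vert_{\ker(i\gw_k-\mc G_1)}$) and then argues by a row-by-row reduction on the block upper-triangular operator $\tilde{\mathbb P}(i\gw_k)K$: the bottom row kills $z_1$ via Lemma~\ref{lem:PKinj}, whereupon $z_2$ becomes an eigenvector of $\mc G_1$, the next row kills $z_2$, and so on. You instead construct the full Jordan chain $(x_l,z_l)_{l=1}^{n_k}$ of $\tilde A_e$ at $i\gw_k$ in one shot and peel it off from the bottom using $i\gw_k\notin\gs_p(\tilde A_e)$; this absorbs Lemma~\ref{lem:PKinj} as the $l=1$ step rather than invoking it as a separate result. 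Your route is a little more self-contained and makes the geometric picture (a Jordan chain of $\mc G_1$ lifting to a Jordan chain of $\tilde A_e$) explicit; the paper's route is more modular and reuses the single-eigenvector lemma, which is also needed elsewhere (e.g.\ in Lemma~\ref{lem:PKmatsurj}). The telescoping identity and the row decomposition $\tilde C x_l + \tilde D K z_l = (\tilde{\mathbb P}(i\gw_k)K\vec z)_l$ that you highlight are exactly the algebraic content underlying both approaches.
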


\begin{proof}
  Let $k\in\Z$. We have from Assumption~\ref{ass:pertclass} that $i\gw_k\in \rho(\tilde{A})$, and since $\tilde{A}_e$ generates a strongly stable semigroup, we must have $\gs_p(\tilde{A}_e)\cap i\R=\varnothing$. Therefore the conditions of Lemma~\ref{lem:PKinj} are satisfied and the operator $(\tilde{P}(i\gw_k)K)\vert_{\ker(i\gw_k - \mc{G}_1)}$ is injective.

  Let $\vec{z} = (z_{n_k},\ldots,z_1)^T\in \ker(\JG)\subset  \Dom(\mc{G}_1)^{n_k}$ be such that
  $\tilde{\mathbb{P}}(i\gw_k) K \vec{z}  = 0 $. If we denote $\tilde{P}_l = (-1)^l \tilde{C} R(i\gw_k, \tilde{A}_{-1})^{l+1} \tilde{B} $ for $l\in \List{n_k-1}$, then the equation can be written as
  \eq{
\pmatsmall{\tilde{P}(i\gw_k)K &  \tilde{P}_1 K & \cdots &  \tilde{P}_{n_k-1} K \\
&\ddots\\
&& \tilde{P}(i\gw_k)K & \tilde{P}_1 K \\
&&& \tilde{P}(i\gw_k)K}
\pmat{z_{n_k}\\\vdots \\z_1}
= \pmat{0\\\vdots \\0}.
}
Since $z_1\in \ker(i\gw_k - \mc{G}_1)$ and since $(\tilde{P}(i\gw_k)K)\vert_{\ker(i\gw_k - \mc{G}_1)}$ is injective by Lemma~\ref{lem:PKinj}, the last line implies $z_1=0$. Since $\JG \vec{z} = 0$, we also have $(\mc{G}_1 - i\gw_k ) z_2 = z_1=0$, and thus $z_2 \in \ker(i\gw_k - \mc{G}_1)$.
Substituting $z_1= 0$ to the second last line of the matrix equation becomes $P(i\gw_k)K z_2=0$, and the injectivity of $(P(i\gw_k)K)\vert_{\ker(i\gw_k - \mc{G}_1)}$ implies $z_2=0$. 

These steps can be repeated until we have reached $z_1 = \cdots = z_{n_k-1}=0$, and $(\mc{G}_1 - i\gw_k)z_{n_k}=z_{n_k-1}=0$ shows that $z_{n_k}\in \ker(i\gw_k - \mc{G}_1)$. Substituting these to the top line of the matrix equation we get $P(i\gw_k)K z_{n_k} = 0$, which in turn implies $z_{n_k}=0$ due to the injectivity of the operator $(P(i\gw_k)K)\vert_{\ker(i\gw_k - \mc{G}_1)}$. This finally concludes $\vec{z}=0$.
\end{proof}

\begin{lemma}
  \label{lem:PKmatsurjtopcopy}
  Assume $\dim Y <\infty$.
  If $(\tilde{A},\tilde{B}, \tilde{C}, \tilde{D}, \tilde{E}, \tilde{F})\in \Ops$
  are such that the operator
$(\tilde{\mathbb{P}}(i\gw_k)K)\vert_{\ker(\JG)}$ 
  is surjective for all $k\in \Z$, then the controller incorporates a p-copy internal model of the exosystem.
\end{lemma}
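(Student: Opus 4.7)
The plan is to deduce the p-copy internal model of Definition~\ref{def:pcopy} from the bijectivity of $(\tilde{\mathbb{P}}(i\gw_k)K)|_{\ker(\JG)}$ via a dimension-counting argument on $\ker(\JG)$. Fix $k \in \Z$ and write $p := \dim Y$. Under Assumption~\ref{ass:pertclass} we have $i\gw_k \in \rho(\tilde{A})$ and $i\gw_k \notin \gs_p(\tilde{A}_e)$ (the latter because $\tilde{A}_e$ generates a strongly stable semigroup), so Lemma~\ref{lem:PKmatinj} applies. Combined with the hypothesized surjectivity, the restriction $\psi := (\tilde{\mathbb{P}}(i\gw_k)K)|_{\ker(\JG)}$ is a bijection onto $Y^{n_k}$, yielding $\dim \ker(\JG) = p n_k$.

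Next I would identify $\ker(\JG)$ algebraically. The block-bidiagonal form of $\JG$ forces any $\vec{z} = (z_{n_k}, \ldots, z_1)^T \in \Dom(\mc{G}_1)^{n_k}$ satisfying $\JG \vec{z} = 0$ to obey $z_{n_k-j} = (\mc{G}_1 - i\gw_k)^j z_{n_k}$ for $j = 0, \ldots, n_k - 1$ together with $(\mc{G}_1 - i\gw_k)^{n_k} z_{n_k} = 0$. Hence $\vec{z} \mapsto z_{n_k}$ is a linear isomorphism from $\ker(\JG)$ onto $\ker((\mc{G}_1 - i\gw_k)^{n_k})$ equipped with its natural domain $\Dom(\mc{G}_1^{n_k})$, giving $\dim \ker((\mc{G}_1 - i\gw_k)^{n_k}) = p n_k$. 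Reading off the bottom block of $\tilde{\mathbb{P}}(i\gw_k)K \vec{z}$, which by the block-upper-triangular form of $\tilde{\mathbb{P}}(i\gw_k)$ equals $\tilde{P}(i\gw_k)Kz_1$ with $z_1 \in \ker(i\gw_k - \mc{G}_1)$, the surjectivity of $\psi$ onto vectors $(0,\ldots,0,y)^T$ shows that $\tilde{P}(i\gw_k)K|_{\ker(i\gw_k - \mc{G}_1)}$ is surjective onto $Y$; combined with Lemma~\ref{lem:PKinj} this yields $\dim \ker(i\gw_k - \mc{G}_1) = p$.

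Finally, the invariant subspace $V := \ker((\mc{G}_1 - i\gw_k)^{n_k})$ is finite-dimensional and the restriction $N := (\mc{G}_1 - i\gw_k)|_V$ is nilpotent with $\dim \ker N = p$. Standard Jordan theory for nilpotents on finite-dimensional spaces then produces exactly $p$ Jordan chains of $N$ whose lengths $\ell_1, \ldots, \ell_p$ sum to $\dim V = p n_k$ and each satisfy $\ell_j \leq n_k$; the equality forces $\ell_j = n_k$ for every $j$. These are $p$ linearly independent Jordan chains of $\mc{G}_1$ of length $\geq n_k$ at $i\gw_k$, so together with $\dim \ker(i\gw_k - \mc{G}_1) = p$ both requirements of Definition~\ref{def:pcopy} are satisfied; since $k \in \Z$ was arbitrary, the controller incorporates a p-copy internal model. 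The main technical subtlety is the iterated domain handling in the parameterization of $\ker(\JG)$: one must verify that the successive chain conditions force $z_{n_k} \in \Dom(\mc{G}_1^{n_k})$, so that the expression $(\mc{G}_1 - i\gw_k)^j z_{n_k}$ is meaningful for each $j$. Once this is in place, the remainder is elementary linear algebra on a finite-dimensional generalized eigenspace.
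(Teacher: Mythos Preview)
Your argument is correct, but it takes a genuinely different route from the paper's. The paper's proof uses only the surjectivity hypothesis: for each basis vector $e_l\in Y=\C^p$ it picks $\vec{z}^l\in\ker(\JG)$ with $\tilde{\mathbb{P}}(i\gw_k)K\vec{z}^l=(0,\ldots,0,e_l)^T$, reads off $\tilde{P}(i\gw_k)Kz_1^l=e_l$ from the bottom block, and observes that the resulting $p$ Jordan chains $(z_j^l)_{j=1}^{n_k}$ have linearly independent eigenvectors $z_1^l$. No injectivity, no dimension counting, no Jordan normal form for nilpotents is invoked. Your approach instead combines surjectivity with Lemma~\ref{lem:PKmatinj} to obtain bijectivity, deduces $\dim\ker(\JG)=pn_k$, identifies $\ker(\JG)\cong\ker((\mc{G}_1-i\gw_k)^{n_k})$ via $\vec{z}\mapsto z_{n_k}$, and then applies the structure theory of finite-dimensional nilpotents to force all $p$ Jordan blocks to have length exactly $n_k$. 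The paper's argument is shorter and needs fewer ingredients; yours yields more structural information (the precise dimension of the generalized eigenspace and the fact that \emph{all} chains have length exactly $n_k$), at the cost of invoking injectivity and the Jordan theorem. Your remark about iterated domains is correct and the verification is straightforward: each relation $z_{n_k-j}=(\mc{G}_1-i\gw_k)z_{n_k-j+1}$ together with $z_{n_k-j}\in\Dom(\mc{G}_1)$ bootstraps $z_{n_k}\in\Dom(\mc{G}_1^{n_k})$.
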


\begin{proof}
  Let $p=\dim Y$ and $k\in \Z$. By construction, if $\vec{z} = (z_{n_k},\ldots,z_1)^T \in \ker(\JG)$ is such that $z_1\neq 0$, then $(z_j)_{j=1}^{n_k}$ is a Jordan chain of $\mc{G}_1$ associated to the eigenvalue $i\gw_k$.
  Let $\set{e_l}_{l=1}^p \subset Y=\C^p$ be the natural basis vectors of $\C^p$. Then by surjectivity of $(\tilde{\mathbb{P}}(i\gw_k)K)\vert_{\ker(\JG)}$ there exist $\set{\vec{z}^l}_{l=1}^p \subset \ker(\JG)$ such that 
  \eq{
  \pmatsmall{\tilde{P}(i\gw_k)K &  \tilde{P}_1 K & \cdots &  \tilde{P}_{n_k-1} K \\
&\ddots\\
&& \tilde{P}(i\gw_k)K & \tilde{P}_1 K \\
&&& \tilde{P}(i\gw_k)K}
\pmat{z_{n_k}^l\\\vdots \\z_1^l}
= \pmat{0\\\vdots \\e_l}
  }
  for all $l\in \List{p}$.
  The bottom lines of the equations show that $\tilde{P}(i\gw_k)K z_1^l = e_l$, and therefore the first elements $\set{z_1^l}_{l=1}^p$ must be linearly independent, because $\set{e_l}_{l=1}^p$ are linearly independent. This concludes that $\mc{G}_1$ has $p$ independent Jordan chains $\set{z_j^l}_{j=1}^{n_k}$ of lengths $n_k$ associated to the eigenvalue $i\gw_k$. Since $k\in \Z$ was arbitrary, this concludes the proof.
\end{proof}

\begin{lemma}
  \label{lem:PKmatsurj}
  Assume $\dim Y <\infty$.
  If the controller incorporates a p-copy internal model of the exosystem and if
  $(\tilde{A},\tilde{B}, \tilde{C}, \tilde{D}, \tilde{E}, \tilde{F})\in \Ops$,
  then the operator
$(\tilde{\mathbb{P}}(i\gw_k)K)\vert_{\ker(\JG)}$ 
  is surjective for all $k\in \Z$.
\end{lemma}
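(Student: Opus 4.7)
The plan is to establish surjectivity by a dimension count, leveraging that the codomain $Y^{n_k}$ has dimension $p n_k$ where $p = \dim Y$. Fix $k \in \Z$. Lemma~\ref{lem:PKmatinj} already tells us that the restriction $(\tilde{\mathbb{P}}(i\gw_k)K)\vert_{\ker(\JG)}$ is injective into $Y^{n_k}$, which immediately forces $\dim \ker(\JG) \le p n_k$. Since an injective linear map between finite-dimensional spaces of equal dimension is automatically surjective, it suffices to exhibit $p n_k$ linearly independent elements of $\ker(\JG)$.

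To produce them, I would use the p-copy internal model hypothesis to obtain $p$ linearly independent Jordan chains $(v_j^r)_{j=1}^{m_r}$ of $\mc{G}_1$ at the eigenvalue $i\gw_k$, each of length $m_r \geq n_k$, so that $(i\gw_k - \mc{G}_1)v_1^r = 0$ and $(\mc{G}_1 - i\gw_k) v_j^r = v_{j-1}^r$ for $j \geq 2$. Independence of the chains gives $p n_k$ linearly independent vectors $\set{v_j^r : 1 \le r \le p,\ 1 \le j \le n_k}$, all belonging to $\Dom(\mc{G}_1)$. For each such $w = v_j^r$ I would define $\vec{z}(w) = (z_{n_k}, \ldots, z_1)^T$ by
\eq{
z_l = (\mc{G}_1 - i\gw_k)^{n_k - l} w, \qquad l = 1, \ldots, n_k.
}
Each $z_l$ is then either another element of the same chain (explicitly $v_{j - (n_k - l)}^r$ when $n_k - l < j$) or zero, hence lies in $\Dom(\mc{G}_1)$, and the defining relations for membership in $\ker(\JG)$ reduce directly to the Jordan relations on the chain, so $\vec{z}(w) \in \ker(\JG)$. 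The assignment $w \mapsto \vec{z}(w)$ is linear and injective because its top coordinate is $z_{n_k} = w$, producing the required $p n_k$ linearly independent elements of $\ker(\JG)$.

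Combining the two bounds yields $\dim \ker(\JG) = p n_k = \dim Y^{n_k}$, and the injectivity from Lemma~\ref{lem:PKmatinj} upgrades to bijectivity, in particular surjectivity. The only subtlety I anticipate is verifying that the map $w \mapsto \vec{z}(w)$ actually lands in $\Dom(\mc{G}_1)^{n_k}$ and preserves linear independence; both follow from the standard bookkeeping that the iterates $(\mc{G}_1 - i\gw_k)^s v_j^r$ are themselves chain elements or zero, and so no analytical input beyond the algebraic Jordan relations is needed.
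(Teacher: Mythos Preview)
Your argument is correct and takes a genuinely different route from the paper's. The paper proves surjectivity constructively: given $\vec{y}\in Y^{n_k}$, it builds a preimage $\vec{z}\in\ker(\JG)$ by solving the block-triangular system row by row from the bottom up, using at each step the surjectivity of $(\tilde{P}(i\gw_k)K)\vert_{\ker(i\gw_k-\mc{G}_1)}$ together with an auxiliary fact from \cite[Lem.~6.8]{PauPoh10} that $\ker(i\gw_k-\mc{G}_1)^{n_k-1}\subset\ran(i\gw_k-\mc{G}_1)$ (which follows from the p-copy structure once one knows $\dim\ker(i\gw_k-\mc{G}_1)=p$). Your approach sidesteps this inclusion entirely by a dimension count: injectivity from Lemma~\ref{lem:PKmatinj} bounds $\dim\ker(\JG)$ from above by $pn_k$, while the $p$ independent Jordan chains supply $pn_k$ linearly independent kernel elements via the map $w\mapsto\vec{z}(w)$, forcing equality and hence bijectivity. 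Your route is more self-contained and arguably cleaner; the paper's has the minor advantage of producing an explicit preimage, though that formula is not used elsewhere. The one point you leave implicit---that linear independence of the eigenvectors $v_1^r$ forces linear independence of the full collection $\set{v_j^r : 1\le r\le p,\ 1\le j\le n_k}$---is indeed standard (apply $(\mc{G}_1-i\gw_k)^{n_k-1},(\mc{G}_1-i\gw_k)^{n_k-2},\ldots$ successively to any vanishing linear combination), so your proof is complete.
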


\begin{proof}
Let $p=\dim Y$, $(\tilde{A},\tilde{B},\tilde{C},\tilde{D}, \tilde{E}, \tilde{F})\in \Ops$, and $k\in \Z$.
Then $i\gw_k \in \rho(\tilde{A})$ by Assumption~\ref{ass:pertclass}, and since $\tilde{A}_e$ generates a strongly stable semigroup, we have $i\gw_k\notin \gs_p(\tilde{A}_e)$. Thus we have from Lemma~\ref{lem:PKinj} that $(\tilde{P}(i\gw_k)K)\vert_{\ker(i\gw_k-\mc{G}_1)}$ is injective, and since $\dim \ker(i\gw_k - \mc{G}_1) \geq p$ due to the p-copy internal model, it also surjective. Actually, the Rank Nullity Theorem~\cite[Thm. 4.7.7]{NaySel82book} together with the invertibility of $(\tilde{P}(i\gw_k)K)\vert_{\ker(i\gw_k-\mc{G}_1)}$ implies 
\eq{
\dim \ker(i\gw_k - \mc{G}_1)
&=\dim \ran \left( (\tilde{P}(i\gw_k)K)\vert_{\ker(i\gw_k-\mc{G}_1)} \right)
+\dim \ker \left( (\tilde{P}(i\gw_k)K)\vert_{\ker(i\gw_k-\mc{G}_1)} \right)\\
&=\dim \ran \left( (\tilde{P}(i\gw_k)K)\vert_{\ker(i\gw_k-\mc{G}_1)} \right)
=\dim Y
=p.
}

We will show that for any $\vec{y} = (y_{n_k},\ldots,y_1)^T\in Y^{n_k}$ we can choose an element $\vec{z} = (z_{n_k},\ldots,z_1)^T\in \ker(\JG)\subset  \Dom(\mc{G}_1)^{n_k}$ such that
$\tilde{\mathbb{P}}(i\gw_k) K \vec{z}  = \vec{y} $. Denote $\tilde{P}_l = (-1)^l \tilde{C} R(i\gw_k, \tilde{A}_{-1})^{l+1} \tilde{B} $ for $l\in \List{n_k-1}$. The equation can be written as
\eqn{
\label{eq:PKmatsurj}
\pmatsmall{\tilde{P}(i\gw_k)K &  \tilde{P}_1 K & \cdots &  \tilde{P}_{n_k-1} K \\
&\ddots\\
&& \tilde{P}(i\gw_k)K & \tilde{P}_1 K \\
&&& \tilde{P}(i\gw_k)K}
\pmat{z_{n_k}\\\vdots \\z_1}
= \pmat{y_{n_k}\\\vdots \\y_1}.
}
It was shown in~\cite[Lem. 6.8]{PauPoh10} that since the controller incorporates a p-copy internal model of the exosystem and since
$\dim \ker(i\gw_k - \mc{G}_1)=p$, we have $\ker(i\gw_k - \mc{G}_1)^{n_k-1}\subset \ran(i\gw_k - \mc{G}_1)$.

Since $(\tilde{P}(i\gw_k)K)\vert_{\ker(i\gw_k-\mc{G}_1)}$ is surjective, we can choose $z_1\in \ker(i\gw_k - \mc{G}_1)$ in such a way that $\tilde{P}(i\gw_k) K z_1 = y_1$. This shows that the bottom line of equation~\eqref{eq:PKmatsurj} is satisfied. If $n_k=1$, the proof is complete. Otherwise we continue as follows.

Since $\ker(i\gw_k - \mc{G}_1)\subset \ker(i\gw_k - \mc{G}_1)^{n_k-1}\subset \ran(i\gw_k - \mc{G}_1)$, we can choose $\tilde{z}_2 \in \Dom(\mc{G}_1)$ such that $(\mc{G}_1 - i\gw_k) \tilde{z}_2 = z_1$. Now choose $\gd_2 \in \ker(i\gw_k - \mc{G}_1)$ in such a way that
\eq{
\tilde{P}(i\gw_k) K \gd_2 = y_2 - \tilde{P}(i\gw_k) K \tilde{z}_2 - \tilde{P}_1 K z_1 .
}
This is possible since $(\tilde{P}(i\gw_k)K)\vert_{\ker(i\gw_k- \mc{G}_1)}$ is surjective. 
If we choose $z_2 = \tilde{z}_2 + \gd_2$, then $(\mc{G}_1 - i\gw_k )z_2 = z_1$, and 
\eq{
\tilde{P}(i\gw_k) K z_2  + \tilde{P}_1  K z_1 = y_2.
}
This shows that the second last line of equation~\eqref{eq:PKmatsurj} is satisfied.

These same steps can be repeated until we have chosen $\set{z_l}_{l=1}^{n_k-1}$ in such a way that $n_k-1$ lines from the bottom of equation~\eqref{eq:PKmatsurj} are satisfied and $\set{z_l}_{l=1}^{n_k-1}$ is a Jordan chain of $\mc{G}_1$. Then, since $z_{n_k-1}\in \ker(i\gw_k - \mc{G}_1)^{n_k-1} \subset \ran(i\gw_k - \mc{G}_1)$, we can choose $\tilde{z}_{n_k} \in \Dom(\mc{G}_1)$ such that $(\mc{G}_1 - i\gw_k) \tilde{z}_{n_k} = z_{n_k-1}$. Now choose $\gd_{n_k} \in \ker(i\gw_k - \mc{G}_1)$ in such a way that
\eq{
\tilde{P}(i\gw_k) K \gd_{n_k} =  y_{n_k} - \tilde{P}(i\gw_k) K \tilde{z}_{n_k} - \sum_{l=1}^{n_k-1} \tilde{P}_l K z_{n_k-l} .
}
This is possible since $(\tilde{P}(i\gw_k)K)\vert_{\ker(i\gw_k- \mc{G}_1)}$ is surjective. 
If we choose $z_{n_k} = \tilde{z}_{n_k} + \gd_{n_k}$, then $(\mc{G}_1 - i\gw_k )z_{n_k} = z_{n_k-1}$, and 
\eq{
\tilde{P}(i\gw_k) K z_{n_k}  + \sum_{l=1}^{n_k-1} \tilde{P}_l K z_{n_k-l} = y_{n_k}.
}
This finally shows that the first line of equation~\eqref{eq:PKmatsurj} is satisfied. By construction we thus have $\tilde{P}(i\gw_k)K \vec{z} = \vec{y}$, and $\set{z_l}_{l=1}^{n_k}$ is a Jordan chain of $\mc{G}_1$ associated to $i\gw_k$, i.e., $\JG \vec{z} = 0$. This concludes the proof.  
\end{proof}

\textit{Proof of Theorem~\textup{\ref{thm:pcopyvsPKmatinv}}.}
  The first claim follows directly from Lemma~\ref{lem:PKmatsurjtopcopy}. The second claim follows from Lemmas~\ref{lem:PKmatinj} and~\ref{lem:PKmatsurj}.
  $\Box$

We can now use Theorem~\ref{thm:pcopyvsPKmatinv} to present a proof for the p-copy internal model principle in Theorem~\ref{thm:IMP}.

\textit{Proof of Theorem~\textup{\ref{thm:IMP}}.} 
 We begin by showing that a controller incorporating a p-copy internal model solves the robust output regulation problem.
  To this end, let $(\tilde{A},\tilde{B},\tilde{C},\tilde{D},\tilde{E},\tilde{F})\in \Ops$. 
  We have from Theorem~\ref{thm:pcopyvsPKmatinv} that  $(\tilde{\mathbb{P}}(i\gw_k) K)\vert_{\ker(\JG)}$ are invertible for all $k\in \Z$. This means in particular that for any $k\in \Z$ we can choose $\vec{z}^k\in \Dom(\mc{G}_1)^{n_k}$ in such a way that $\JG \vec{z}^k = 0$ and 
  \eq{
  \tilde{\mathbb{P}}(i\gw_k) K \vec{z}^k = - \tilde{C} \mathbb{R}(i\gw_k, \tilde{A})\tilde{E} \Phi_k  - \tilde{F}\Phi_k.
  }
  Therefore the equations~\eqref{eq:Robchar} have a solution for all $k\in \Z$, and Theorem~\ref{thm:Robchareqns} states that the controller is robust with respect to the given perturbations. Since the perturbations were arbitrary, this concludes the proof.

  Conversely, assume that the controller solves the robust output regulation problem. By Theorem~\ref{thm:pcopyvsPKmatinv} it is sufficient to show that for some perturbations in $\Ops$ the operator $(\tilde{\mathbb{P}}(i\gw_k))\vert_{\ker(\JG)}$ is surjective for all $k\in\Z$.  We leave the operators $(A,B,C,D)$ unperturbed and show that $(\mathbb{P}(i\gw_k))\vert_{\ker(\JG)}$ are surjective by choosing the perturbed operators $\tilde{E}$ and $\tilde{F}$ in a suitable way. The closed-loop system is strongly stable and parts (a) and (b) of Assumption~\ref{ass:pertclass} are satisfied. Let $k\in\Z$ be fixed. We have $i\gw_k\in \rho(A_e)$ by assumption. 
  Let $\vec{y} = (y_{n_k},\ldots,y_1)^T\in Y^{n_k}$, and choose $\tilde{E}=0\in \Lin(W,X_e)$ and $\tilde{F}= -\sum_{l=1}^{n_k} \iprod{\cdot}{\phi_k^l}y_l$. We then have $\tilde{F}\Phi_k = -\vec{y}$. Since $\tilde{B}_e\phi_{k'}^l= (\tilde{E}\phi_{k'}^l,\mc{G}_2 \tilde{F}\phi_{k'}^l)^T=0$ for any $k'\neq k$, the supremum in~\eqref{eq:seriesass} is clearly finite and we have from part (b) of Theorem~\ref{thm:Sylprop} that the Sylvester equation $\Sigma S = A_e \Sigma + \tilde{B}_e$ has a solution $\Sigma\in \Lin(W_\ga,X_e)$. This concludes that the perturbations satisfy $(A,B,C,D,\tilde{E},\tilde{F})\in \Ops$.

  Since the controller solves the robust output regulation problem and since we have $(A,B,C,D,\tilde{E},\tilde{F})\in \Ops$, Theorem~\ref{thm:Robchareqns} implies that there exists $\vec{z}^k\in \ker(\JG)$ such that
  \eq{
  \mathbb{P}(i\gw_k) K \vec{z}^k &= - C \mathbb{R}(i\gw_k, A)\tilde{E} \Phi_k  - \tilde{F}\Phi_k\\
  \Leftrightarrow \qquad \mathbb{P}(i\gw_k) K \vec{z}^k &=   \vec{y} .
  }
  Since $\vec{y}\in Y^{n_k}$ was arbitrary, this shows that $(\mathbb{P}(i\gw_k))\vert_{\ker(\JG)}$ is surjective. 
  The index $k\in\Z$ was arbitrary, and thus we have from Theorem~\ref{thm:pcopyvsPKmatinv} that the controller incorporates a p-copy internal model.
  $\Box$

\section{The $\mc{G}$-Conditions}
\label{sec:Gconds}

In this section we show that also the so-called \keyterm{\Gconds}~\cite{HamPoh10,PauPoh10} can be used in characterizing controllers that solve the robust output regulation problem. As we will see in Section~\ref{sec:infexoex}, one of the strengths of the \Gconds\ is that they are straightforward to verify for a certain type of triangular controllers. Moreover, this version of the internal model is meaningful also in the situation where the output space $Y$ is infinite-dimensional.

\begin{definition}[The $\mc{G}$-conditions]
  \label{def:Gconds}
  A controller\/ $(\mc{G}_1,\mc{G}_2,K)$ is said to satisfy the\/ {\em\Gconds} 
  if
\begin{subequations}
  \label{eq:Gconds}
\eqn{
\ran(i\gw_k-\mc{G}_1)\cap\ran(\mc{G}_2)&=\set{0} ~\quad\qquad\qquad \forall k\in\Z , \label{eq:Gconds1} 
\\
\ker(\mc{G}_2)&=\set{0} \label{eq:Gconds2}\\
\label{eq:Gconds3}
\ker(i\gw_k-\mc{G}_1)^{n_k-1}&\subset\ran(i\gw_k-\mc{G}_1) \qquad  \forall k\in\Z.
}
\end{subequations}
\end{definition}

The following theorem is the main result of this section. It was shown in~\cite[Lem. 5.7]{PauPoh10} that the condition $Z=\ran(i\gw_k - \mc{G}_1)+ \ran(\mc{G}_2)$ is in particular true if $i\gw_k\in \rho(A_e)$.

\begin{theorem}
  \label{thm:GcondIMP}
  Assume that the controller $(\mc{G}_1,\mc{G}_2,K)$ stabilizes the closed-loop system strongly and satisfies $Z=\ran(i\gw_k - \mc{G}_1)+ \ran(\mc{G}_2)$ for all $k\in\Z$, and the Sylvester equation $\Sigma S = A_e\Sigma + B_e$ has a solution $\Sigma\in \Lin(W_\ga,X_e)$. Then the controller solves the robust output regulation problem on $W_\ga$ if and only if it satisfies the \Gconds.
\end{theorem}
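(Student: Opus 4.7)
The plan is to pivot on Corollary~\ref{cor:IMSindfreq}, which under the stated strong stability and Sylvester solvability hypotheses characterises robustness as follows: for every perturbation in $\Ops$ and every $k\in\Z$, the unique solution $\vec{z}^k\in\Dom(\mc{G}_1)^{n_k}$ of the frequency-wise equation~\eqref{eq:IMSfreq1} automatically satisfies the regulation constraint~\eqref{eq:IMSfreq2}. Both directions of the theorem then reduce to analysing when this implication is forced.

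For the ``if'' direction I would fix a perturbation in $\Ops$ and $k\in\Z$, write $\vec{r}^k$ for the parenthesised expression on the right-hand side of~\eqref{eq:IMSfreq1}, and read that equation row by row from the bottom. The strategy is an induction on $m=1,\dots,n_k$ establishing simultaneously that $z_m^k\in\ker(i\gw_k-\mc{G}_1)^m$ and $r_m^k=0$. At the base, row one reads $(i\gw_k-\mc{G}_1)z_1^k=\mc{G}_2 r_1^k$; condition~\eqref{eq:Gconds1} forces both sides to vanish and~\eqref{eq:Gconds2} then forces $r_1^k=0$. For the inductive step, $z_{m-1}^k\in\ker(i\gw_k-\mc{G}_1)^{m-1}\subset\ker(i\gw_k-\mc{G}_1)^{n_k-1}\subset\ran(i\gw_k-\mc{G}_1)$ by~\eqref{eq:Gconds3}, so row $m$ equates an element of $\ran(i\gw_k-\mc{G}_1)$ with $\mc{G}_2 r_m^k$; \eqref{eq:Gconds1} and~\eqref{eq:Gconds2} again close the step, and $(i\gw_k-\mc{G}_1)z_m^k=-z_{m-1}^k$ raises the power of the annihilator by one. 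After $n_k$ iterations $\vec{r}^k=0$, which is~\eqref{eq:IMSfreq2}.

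For the converse I would build targeted admissible perturbations, leaving $A,B,C,D$ untouched and taking $\tilde{E}=0$ with $\tilde{F}$ supported on finitely many basis vectors $\set{\phi_k^l}$; as in the proof of Theorem~\ref{thm:IMP}, the series in~\eqref{eq:seriesass} then has only finitely many nonzero contributions, so Theorem~\ref{thm:Sylprop}(b) supplies the Sylvester solution demanded by Assumption~\ref{ass:pertclass}(c) and the perturbation lies in $\Ops$. To obtain~\eqref{eq:Gconds1} and~\eqref{eq:Gconds2} simultaneously, given $w\in\Dom(\mc{G}_1)$ and $y\in Y$ with $(i\gw_k-\mc{G}_1)w=\mc{G}_2 y$, I would place $w$ in the top slot of $\vec{z}^k$ with zeros elsewhere, set $\tilde{F}\phi_k^l=0$ for $l<n_k$, and set $\tilde{F}\phi_k^{n_k}=y-P(i\gw_k)Kw$; a direct check shows $\vec{z}^k$ solves~\eqref{eq:IMSfreq1}, uniqueness (Lemma~\ref{lem:RORPcharunique}) identifies it as the solution, and the top row of~\eqref{eq:IMSfreq2} collapses to $y=0$, with the specialisation $w=0$ yielding~\eqref{eq:Gconds2}. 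For~\eqref{eq:Gconds3} and $z\in\ker(i\gw_k-\mc{G}_1)^{n_k-1}$ with $n_k\geq 2$, the vectors $u_l=(i\gw_k-\mc{G}_1)^{n_k-1-l}z$ form a Jordan chain terminating at $z$; placing alternating-sign copies $z_m=(-1)^m u_m$ in $\vec{z}^k$ for $m<n_k$ collapses $\JG\vec{z}^k$ to a single entry $(i\gw_k-\mc{G}_1)z_{n_k}+(-1)^{n_k-1}z$ in the top row, and the standing hypothesis $Z=\ran(i\gw_k-\mc{G}_1)+\ran(\mc{G}_2)$ lets me choose $z_{n_k}$ so this reduces to $\mc{G}_2 v$ for some $v\in Y$. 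Picking $\tilde{F}\phi_k^l$ row by row to cancel the coupling terms from the block upper triangular operator $\mathbb{P}(i\gw_k)K$ makes $\vec{z}^k$ the unique solution of~\eqref{eq:IMSfreq1}, and robustness then forces $v=0$, so $z\in\ran(i\gw_k-\mc{G}_1)$.

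The principal obstacle is the bookkeeping in the necessity direction: verifying that each constructed $\vec{z}^k$ genuinely solves~\eqref{eq:IMSfreq1} requires careful handling of the off-diagonal entries of $\tilde{\mathbb{P}}(i\gw_k)K$ that couple different components of $\vec{z}^k$, and these couplings must be absorbed into the free choices of $\tilde{F}\phi_k^l$ for $l<n_k$ so that Lemma~\ref{lem:RORPcharunique} applies. Once this structural bookkeeping and the admissibility verifications are in place, the three $\mc{G}$-conditions emerge essentially mechanically from uniqueness and the decomposition hypothesis $Z=\ran(i\gw_k-\mc{G}_1)+\ran(\mc{G}_2)$.
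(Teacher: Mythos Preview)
Your overall strategy matches the paper's: the sufficiency direction via Corollary~\ref{cor:IMSindfreq} with the bottom-up row induction is exactly Lemma~\ref{lem:GctoRob}, and the necessity direction via targeted perturbations ($\tilde{E}=0$, $\tilde{F}$ finitely supported) is the content of Lemmas~\ref{lem:RobtoGc1}--\ref{lem:RobtoGc3}. Your unification of~\eqref{eq:Gconds1} and~\eqref{eq:Gconds2} by specialising $w=0$ is a minor streamlining over the paper, which treats~\eqref{eq:Gconds2} separately by exhibiting $\Sigma=0$ as the Sylvester solution directly.

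There is, however, one genuine gap in your verification that the constructed perturbations lie in $\Ops$. You invoke Theorem~\ref{thm:Sylprop}(b) ``as in the proof of Theorem~\ref{thm:IMP}'', but Theorem~\ref{thm:Sylprop}(b) requires $i\gw_k\in\rho(A_e)$ for all $k\in\Z$, and this hypothesis is present in Theorem~\ref{thm:IMP} but \emph{not} in Theorem~\ref{thm:GcondIMP}. Strong stability of $T_e(t)$ only gives $\gs_p(A_e)\cap i\R=\varnothing$, not $i\gw_k\in\rho(A_e)$, so the series criterion~\eqref{eq:seriesass} is not even well-defined under the present hypotheses. The fix is already at hand: once you have exhibited $\vec{z}^k$ solving~\eqref{eq:IMSfreq1}, the first part of Lemma~\ref{lem:RORPcharunique} (which needs only parts (a) and (b) of Assumption~\ref{ass:pertclass}) furnishes a solution $\Sigma_k\in\Lin(W,X_e)$ of $\Sigma_kS=\tilde{A}_e\Sigma_k+\tilde{B}_eP_k$; since your $\tilde{F}$ is supported on $\set{\phi_k^l}_l$ and $\tilde{E}=0$ you have $\tilde{B}_eP_k=\tilde{B}_e$, so this \emph{is} the full Sylvester equation and Assumption~\ref{ass:pertclass}(c) follows. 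This is precisely how the paper proceeds in Lemmas~\ref{lem:RobtoGc1} and~\ref{lem:RobtoGc3}.
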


The proof of Theorem~\ref{thm:GcondIMP} is a direct consequence of the following four lemmas.

\begin{lemma}
\label{lem:RobtoGc1}
If the controller\/ $(\mc{G}_1,\mc{G}_2,K)$ solves the robust output regulation problem, then\/ \eqref{eq:Gconds1} is satisfied.
\end{lemma}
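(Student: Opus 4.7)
Argue by contradiction: suppose some $k\in\Z$ admits a nonzero $z_0\in\ran(i\gw_k-\mc{G}_1)\cap\ran(\mc{G}_2)$, say $z_0=(i\gw_k-\mc{G}_1)z_1=\mc{G}_2 y_0$ for some $z_1\in\Dom(\mc{G}_1)$ and $y_0\in Y$. The strategy is to produce two distinct solutions of the frequency-wise equation~\eqref{eq:IMSfreq1} associated to the nominal system, contradicting the uniqueness in Lemma~\ref{lem:RORPcharunique}.

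The main preparatory step derives from robustness the surjectivity of $(\mathbb{P}(i\gw_k)K)\vert_{\ker(\JG)}\colon\ker(\JG)\to Y^{n_k}$. Given $\vec{y}_F\in Y^{n_k}$, I consider the perturbation that leaves $A,B,C,D,E$ untouched and replaces $F$ with $\tilde{F}=F+F_\delta$, where $F_\delta\in\Lin(W,Y)$ satisfies $F_\delta\Phi_k=\vec{y}_F$ and $F_\delta\phi_{k'}^l=0$ for $k'\neq k$. Since $\tilde{A}_e=A_e$ and $\tilde{C}_e=C_e$, parts (a) and (b) of Assumption~\ref{ass:pertclass} are inherited from the nominal system, and part (c) follows from Theorem~\ref{thm:Sylprop}(b) because the correction $\tilde{B}_e-B_e=(0,\mc{G}_2 F_\delta)^T$ is finite-rank and supported on the $k$-th block. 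Thus the perturbation lies in $\Ops$, so Theorem~\ref{thm:Robchareqns} furnishes some $\vec{z}^k\in\ker(\JG)$ with $\mathbb{P}(i\gw_k)K\vec{z}^k=-C\mathbb{R}(i\gw_k,A)E\Phi_k-F\Phi_k-\vec{y}_F$. Subtracting the corresponding nominal identity for $\vec{z}^{k,0}$ shows that $\vec{w}:=\vec{z}^k-\vec{z}^{k,0}\in\ker(\JG)$ satisfies $\mathbb{P}(i\gw_k)K\vec{w}=-\vec{y}_F$; since $\vec{y}_F$ was arbitrary, surjectivity follows.

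Set $\vec{\Delta}:=(z_1,0,\ldots,0)^T\in\Dom(\mc{G}_1)^{n_k}$ and $\vec{y}:=(y_0,0,\ldots,0)^T\in Y^{n_k}$. The block upper-triangular form of $\JG$ yields immediately $\JG\vec{\Delta}=(z_0,0,\ldots,0)^T=\mc{G}_2\vec{y}$. Invoking the surjectivity just proved, pick $\vec{w}_0\in\ker(\JG)$ with $\mathbb{P}(i\gw_k)K\vec{w}_0=\vec{y}-\mathbb{P}(i\gw_k)K\vec{\Delta}$ and put $\vec{\Delta}':=\vec{\Delta}+\vec{w}_0$. Then $\JG\vec{\Delta}'=\mc{G}_2\vec{y}$ and $\mathbb{P}(i\gw_k)K\vec{\Delta}'=\vec{y}$, hence $\JG\vec{\Delta}'=\mc{G}_2\mathbb{P}(i\gw_k)K\vec{\Delta}'$. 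Moreover $\vec{\Delta}'\neq 0$, since otherwise $\vec{w}_0=-\vec{\Delta}\in\ker(\JG)$ would force $\JG\vec{\Delta}=0$, contradicting $z_0\neq 0$.

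Finally, Lemma~\ref{lem:RORPcharunique} applied to the nominal system provides a unique solution $\vec{z}^{k,0}$ of~\eqref{eq:IMSfreq1}. Combining the identity $\JG\vec{\Delta}'=\mc{G}_2\mathbb{P}(i\gw_k)K\vec{\Delta}'$ with the fact that $\vec{z}^{k,0}$ itself satisfies~\eqref{eq:IMSfreq1}, a one-line calculation shows that $\vec{z}^{k,0}+\vec{\Delta}'$ is a second solution, contradicting uniqueness. This establishes~\eqref{eq:Gconds1}. The principal subtlety I anticipate is the rigorous verification of part (c) of Assumption~\ref{ass:pertclass} for the rank-$n_k$ $F$-perturbations---ensuring that the perturbed Sylvester equation retains a solution in $\Lin(W_\ga,X_e)$ mapping $W_{\ga+1}$ into $\Dom(\tilde{A}_e)$; the remaining steps reduce to routine linear manipulations with the block operator $\JG$.
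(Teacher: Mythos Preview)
Your argument is structurally sound but has a genuine gap precisely at the point you yourself flag as the ``principal subtlety.'' To verify part~(c) of Assumption~\ref{ass:pertclass} for the $F$-perturbations you invoke Theorem~\ref{thm:Sylprop}(b). That result, however, requires $i\gw_k\in\rho(A_e)$ for all $k\in\Z$, and this hypothesis is \emph{not} available in Lemma~\ref{lem:RobtoGc1} (nor in the surrounding Theorem~\ref{thm:GcondIMP}, which only assumes the weaker condition $Z=\ran(i\gw_k-\mc{G}_1)+\ran(\mc{G}_2)$; contrast Theorem~\ref{thm:IMP}, where $i\gw_k\in\rho(A_e)$ \emph{is} assumed and the surjectivity argument you reproduce is exactly how the paper proceeds). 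Without $i\gw_k\in\rho(A_e)$ you cannot solve the finite-rank correction Sylvester equation $\Sigma' S=A_e\Sigma'+(0,\mc{G}_2F_\delta)^T$ for an arbitrary $F_\delta$, so your surjectivity step does not go through in the generality claimed.

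The paper avoids this obstacle by a different, more direct route: instead of establishing surjectivity of $(\mathbb{P}(i\gw_k)K)\vert_{\ker(\JG)}$ and then producing two solutions of~\eqref{eq:IMSfreq1}, it picks a single tailored perturbation $\tilde{E}=0$, $\tilde{F}=\iprod{\cdot}{\phi_k^{n_k}}(y-P(i\gw_k)Kz)$ for which $\vec{z}^k=(z,0,\ldots,0)^T$ is an \emph{explicit} solution of~\eqref{eq:RORPcharunique}. The first part of Lemma~\ref{lem:RORPcharunique} (which needs only parts~(a) and~(b) of Assumption~\ref{ass:pertclass}, not $i\gw_k\in\rho(A_e)$) then yields solvability of the perturbed Sylvester equation, so the perturbation lies in $\Ops$. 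Robustness via Corollary~\ref{cor:IMSindfreq} then forces~\eqref{eq:IMSfreq2}, which reads $(y,0,\ldots,0)^T=0$, hence $y=0$ and $w=\mc{G}_2y=0$. The key difference is that the paper designs the perturbation so that Sylvester solvability is certified by exhibiting a solution of~\eqref{eq:RORPcharunique}, rather than by appealing to a resolvent formula that needs a spectral assumption you do not have.
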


\begin{proof}
Let $k\in\Z$ and $w\in\ran(i\gw_k-\mc{G}_1)\cap\ran(\mc{G}_2)$. Then there 
exist $z\in\Dom(\mc{G}_1)$ and $y\in Y$ such that 
\vspace{-1.5ex}
\eq{
w=(i\gw_k-\mc{G}_1)z=\mc{G}_2y. 
}
Leave the operators $(A,B,C,D)$ unperturbed, and choose $\tilde{E}=0\in \Lin(W,X)$ and $\tilde{F} = \iprod{\cdot}{\phi_k^{n_k}} (y-P(i\gw_k)Kz)\in \Lin(W,Y)$. 
The operators $(A,B,C,D,\tilde{E},\tilde{F})$ satisfy the parts (a) and (b) of Assumption~\ref{ass:pertclass}.

Now $\tilde{F}\Phi_k=(y-P(i\gw_k)Kz,0,\ldots,0)^T\in Y^{n_k}$. For $\vec{z}^k = (z,0,\ldots,0)^T\in \Dom(\mc{G}_1)^{n_k}$ we have 
\eq{
\JG \vec{z}^k = \pmatsmall{i\gw_k- \mc{G}_1&I\\&i\gw_k-\mc{G}_1\\&&\ddots&I\\&&&i\gw_k-\mc{G}_1} \pmat{z\\\vdots\\0}
=  \pmat{(i\gw_k-\mc{G}_1)z \\\vdots\\ 0}
=  \pmat{\mc{G}_2y \\ \vdots\\ 0}
}
and 
\eq{
\MoveEqLeft\mc{G}_2 \left( \mathbb{P}(i\gw_k)K \vec{z}^k + C\mathbb{R}(i\gw_k,A)\tilde{E}\Phi_k + \tilde{F}\Phi_k \right)\\
&= \mc{G}_2\bigl[ (P(i\gw_k)Kz ,0,\ldots,0)^T + (y-P(i\gw_k)Kz ,0,\ldots,0)^T \bigr]
= \left( \mc{G}_2y , 0, \ldots, 0\right)^T .
}
Therefore $\vec{z}^k=(z,0,\ldots,0)^T$ is a solution of the equation~\eqref{eq:RORPcharunique}. Using the fact that 
\eq{
\tilde{B}_eP_k 
= \pmat{\tilde{E}P_k\\ \mc{G}_2 \tilde{F}P_k}
= \pmat{\tilde{E}\\ \mc{G}_2 \tilde{F}}
=\tilde{B}_e,
}
 we have from Lemma~\ref{lem:RORPcharunique} that the Sylvester equation $\Sigma S = \tilde{A}_e \Sigma + \tilde{B}_e$ has a solution $\Sigma\in \Lin(W,X_e)$. This concludes that $(A,B,C,D,\tilde{E},\tilde{F})\in \Ops$.
 
 Since $\vec{z}^k$ is a solution of~\eqref{eq:IMSfreq1}, we have from Corollary~\ref{cor:IMSindfreq} that it also satisfies
\eq{
0 &= \mathbb{P}(i\gw_k)K \vec{z}^k + C\mathbb{R}(i\gw_k,A)\tilde{E}\Phi_k + \tilde{F}\Phi_k \\
&= (P(i\gw_k)Kz ,0,\ldots,0)^T + (y-P(i\gw_k)Kz ,0,\ldots,0)^T 
= (y,0,\ldots,0)^T\in Y^{n_k},
}
which implies $y=0$. This further shows that $w=\mc{G}_2 y=0$. Since $w\in \ran(i\gw_k-\mc{G}_1)\cap \ker(\mc{G}_2) $ and $k\in\Z$ were arbitrary, we have that~\eqref{eq:Gconds1} is satisfied.
\end{proof}

\begin{lemma}
  \label{lem:RobtoGc2}
If the controller\/ $(\mc{G}_1,\mc{G}_2,K)$ solves the robust output regulation problem, then\/ \eqref{eq:Gconds2} is satisfied.
\end{lemma}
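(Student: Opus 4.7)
The plan is to argue by contradiction, closely mirroring the proof of Lemma~\ref{lem:RobtoGc1}. Suppose a nonzero $y\in\ker(\mc{G}_2)$ exists; I will exhibit a single perturbation in $\Ops$ for which Corollary~\ref{cor:IMSindfreq} forces $y=0$.

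Fix any $k\in\Z$, leave $(A,B,C,D)$ unperturbed, and perturb only the signal generator part by setting $\tilde{E}=0\in \Lin(W,X)$ and $\tilde{F}=\iprod{\cdot}{\phi_k^{n_k}}y\in \Lin(W,Y)$, so that $\tilde{F}\Phi_k=(y,0,\ldots,0)^T\in Y^{n_k}$ and $\tilde{F}\Phi_{k'}=0$ for $k'\neq k$. The crucial observation is that because $y\in\ker(\mc{G}_2)$ one has $\mc{G}_2\tilde{F}\equiv 0$ and consequently $\tilde{B}_e=0$. This makes the verification of $(A,B,C,D,\tilde{E},\tilde{F})\in\Ops$ essentially immediate: parts (a) and (b) of Assumption~\ref{ass:pertclass} hold since $\tilde{A}_e=A_e$ is already strongly stable by hypothesis, and part (c) is trivial because $\Sigma=0$ solves $\Sigma S=\tilde{A}_e\Sigma+\tilde{B}_e$.

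With the perturbation in place, equation~\eqref{eq:IMSfreq1} at the chosen $k$ reduces to $\JG \vec{z}^k=\mc{G}_2 \mathbb{P}(i\gw_k) K \vec{z}^k$, which has $\vec{z}^k=0$ as an obvious solution; by the uniqueness clause of Lemma~\ref{lem:RORPcharunique} it is the only one. Applying Corollary~\ref{cor:IMSindfreq}, whose hypothesis is precisely robustness of the controller, then forces the regulation constraint~\eqref{eq:IMSfreq2} to hold, which under these choices collapses to $0=\tilde{F}\Phi_k=(y,0,\ldots,0)^T$, and therefore $y=0$, the desired contradiction.

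I do not anticipate any real obstacle. The only delicate point is choosing $\tilde{F}$ so that $\tilde{B}_e$ vanishes while $\tilde{F}\Phi_k$ still carries $y$, and the membership $y\in\ker(\mc{G}_2)$ is exactly what makes this asymmetry possible; conceptually, this asymmetry is precisely why $\ker(\mc{G}_2)=\set{0}$ must be a necessary condition for robustness.
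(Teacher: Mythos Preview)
Your proposal is correct and follows essentially the same idea as the paper: exploit $y\in\ker(\mc{G}_2)$ to make $\tilde{B}_e=0$, so that $\Sigma=0$ solves the perturbed Sylvester equation and the regulation constraint then forces $y=0$. The only difference is packaging: the paper takes an arbitrary unit vector $\phi\in W_{\ga+1}$, sets $\tilde{F}=\iprod{\cdot}{\phi}y$, and applies Lemma~\ref{lem:RORP} directly to the solution $\Sigma=0$ (yielding $0=\tilde{C}_e\Sigma\phi+\tilde{D}_e\phi=\tilde{F}\phi=y$), whereas you route through Corollary~\ref{cor:IMSindfreq} and the uniqueness clause of Lemma~\ref{lem:RORPcharunique} at a fixed frequency $i\gw_k$; both arrive at the same conclusion with comparable effort.
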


\begin{proof} 
  Let $y\in\ker(\mc{G}_2)$ and let $\phi\in W_{\ga+1}$ be such that $\norm{\phi}=1$.
Leave the operators $(A,B,C,D)$ unperturbed, and choose $\tilde{E}=0\in \Lin(W,X)$ and $\tilde{F} = \iprod{\cdot}{\phi} y\in \Lin(W,Y)$. 
 The operators $(A,B,C,D,\tilde{E},\tilde{F})$ satisfy the parts (a) and (b) of Assumption~\ref{ass:pertclass}.

 If we choose $\Sigma = 0 \in \Lin(W_\ga,X_e)$, then $\Sigma (W_{\ga+1})= \set{0}\subset \Dom(\tilde{A}_e)$ and for all $v\in W_{\ga+1}$ we have $\Sigma S v = 0$ and
 \eq{
 \tilde{A}_e \Sigma v + \tilde{B}_e v = \pmat{\tilde{E}v\\\mc{G}_2 \tilde{F}v}
 = \pmat{0\\\iprod{v}{\phi}\mc{G}_2 y} = 0.
 }
 This shows that $\Sigma =0$ is a solution of  $\Sigma S = \tilde{A}_e + \tilde{B}_e$, and thus $(A,B,C,D,\tilde{E},\tilde{F})\in \Ops$.

 Since the controller solves the robust output regulation problem, we have from Lemma~\ref{lem:RORP} that $\tilde{C}_e \Sigma + \tilde{D}_e = 0$ on $W_{\ga+1}$. In particular, using $\norm{\phi}=1$ gives
\eq{
0 = \tilde{C}_e \Sigma \phi + \tilde{D}_e \phi
= \tilde{F}\phi
= \iprod{\phi}{\phi} y 
= y.
}
Since $y\in \ker(\mc{G}_2)$ was arbitrary, this concludes the proof.
\end{proof}

\begin{lemma}
  \label{lem:RobtoGc3}
If $Z=\ran(i\gw_k-\mc{G}_1)+\ran(\mc{G}_2)$ for all $k\in\Z$, and if the controller\/ $(\mc{G}_1,\mc{G}_2,K)$ solves the robust output regulation problem, then\/ \eqref{eq:Gconds3} is satisfied.
\end{lemma}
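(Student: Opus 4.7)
The plan is to mirror the constructions used in Lemmas~\ref{lem:RobtoGc1} and~\ref{lem:RobtoGc2}: starting from $z\in \ker(i\gw_k-\mc{G}_1)^{n_k-1}$, I would construct a vector $\vec{z}^k\in \Dom(\mc{G}_1)^{n_k}$ together with a finite-rank perturbation $\tilde{F}$ supported on $P_kW$ so that $\vec{z}^k$ solves equation~\eqref{eq:IMSfreq1} for the perturbed data, and then exploit the consistency condition~\eqref{eq:IMSfreq2} supplied by Corollary~\ref{cor:IMSindfreq} to conclude $z\in \ran(i\gw_k-\mc{G}_1)$. The case $n_k=1$ is trivial since then $\ker(i\gw_k-\mc{G}_1)^0=\set{0}$, so I assume $n_k\geq 2$.

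Since $z\in Z=\ran(i\gw_k-\mc{G}_1)+\ran(\mc{G}_2)$, decompose $z=(i\gw_k-\mc{G}_1)\hat{z}+\mc{G}_2 y$ with $\hat{z}\in \Dom(\mc{G}_1)$ and $y\in Y$; the goal then reduces to proving $y=0$, since that gives $z=(i\gw_k-\mc{G}_1)\hat{z}$. For the Jordan-chain portion, I set
\eq{
\vec{z}^k=\bigl(-\hat{z},\;z,\;-(i\gw_k-\mc{G}_1)z,\;(i\gw_k-\mc{G}_1)^2 z,\;\ldots,\;(-1)^{n_k-2}(i\gw_k-\mc{G}_1)^{n_k-2}z\bigr)^T,
}
which lies in $\Dom(\mc{G}_1)^{n_k}$ because $z\in \Dom((i\gw_k-\mc{G}_1)^{n_k-1})$. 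A row-by-row computation of $\JG\vec{z}^k$ then uses $(i\gw_k-\mc{G}_1)\hat{z}=z-\mc{G}_2 y$ on the top row, cancels the middle rows pairwise through a sign telescoping, and uses the kernel hypothesis $(i\gw_k-\mc{G}_1)^{n_k-1}z=0$ on the bottom row, yielding $\JG\vec{z}^k=(\mc{G}_2 y,0,\ldots,0)^T$.

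Next, I leave $(A,B,C,D)$ unperturbed, set $\tilde{E}=0$, denote $(q_{n_k},\ldots,q_1)^T:=\mathbb{P}(i\gw_k)K\vec{z}^k$, and define $\tilde{F}=\iprod{\cdot}{\phi_k^{n_k}}(y-q_{n_k})+\sum_{l=1}^{n_k-1}\iprod{\cdot}{\phi_k^l}(-q_l)\in \Lin(W,Y)$. By construction $\mathbb{P}(i\gw_k)K\vec{z}^k+\tilde{F}\Phi_k=(y,0,\ldots,0)^T$, and applying $\mc{G}_2$ componentwise yields $(\mc{G}_2 y,0,\ldots,0)^T=\JG\vec{z}^k$, so $\vec{z}^k$ solves~\eqref{eq:IMSfreq1} for the perturbed data. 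Parts (a) and (b) of Assumption~\ref{ass:pertclass} are immediate since only $F$ is perturbed; part (c) follows from the first half of Lemma~\ref{lem:RORPcharunique} applied to this $\vec{z}^k$, noting that $\tilde{B}_e=\tilde{B}_eP_k$. Hence $(A,B,C,D,0,\tilde{F})\in \Ops$.

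Finally, the robustness of the controller combined with Corollary~\ref{cor:IMSindfreq} forces~\eqref{eq:IMSfreq2} for the unique solution $\vec{z}^k$, i.e., $\mathbb{P}(i\gw_k)K\vec{z}^k+\tilde{F}\Phi_k=0$. Since the left-hand side equals $(y,0,\ldots,0)^T$ by the choice of $\tilde{F}$, we obtain $y=0$ and therefore $z=(i\gw_k-\mc{G}_1)\hat{z}\in \ran(i\gw_k-\mc{G}_1)$. The delicate part of the plan is arranging the signs and the placement of the entry $-\hat{z}$ so that the kernel hypothesis is consumed in precisely the last row of $\JG\vec{z}^k$ while the $\mc{G}_2 y$ remainder appears in precisely the first row; once this alignment is in place, the $\Ops$ membership and the extraction of $y=0$ from Corollary~\ref{cor:IMSindfreq} are routine.
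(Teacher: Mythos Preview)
Your proposal is correct and is essentially the paper's own argument: the decomposition $z=(i\gw_k-\mc{G}_1)\hat{z}+\mc{G}_2 y$, the construction of $\vec{z}^k$, the choice of a finite-rank $\tilde{F}$ supported on $P_kW$ so that $\vec{z}^k$ solves~\eqref{eq:IMSfreq1}, the verification of $\Ops$-membership via Lemma~\ref{lem:RORPcharunique} using $\tilde{B}_e=\tilde{B}_eP_k$, and the extraction of $y=0$ from Corollary~\ref{cor:IMSindfreq} all match the paper exactly. The only difference is that your $\vec{z}^k$ is $(-1)^{n_k}$ times the paper's choice, which is why you obtain $\JG\vec{z}^k=(\mc{G}_2 y,0,\ldots,0)^T$ instead of $((-1)^{n_k}\mc{G}_2 y,0,\ldots,0)^T$; this is immaterial.
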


\begin{proof} 
  Let $k\in\Z$ and $z\in\ker(i\gw_k-\mc{G}_1)^{n_k-1}$. Since $Z=\ran(i\gw_k-\mc{G}_1)+\ran(\mc{G}_2)$, there exist $z_1\in\Dom(\mc{G}_1)$ and $y\in Y$ such 
that
\eq{
z=(i\gw_k-\mc{G}_1)z_1+\mc{G}_2y.
}
To prove the claim it is now sufficient to show that $y=0$.
Leave the operators $(A,B,C,D)$ unperturbed, and choose $\tilde{E}=0\in \Lin(W,X)$.
Choose $\vec{z}^k\in \Dom(\mc{G}_1)^{n_k}$ in such a way that
\eq{
\vec{z}^k = \left( (-1)^{n_k-1}z_1, (-1)^{n_k-2}z, (-1)^{n_k-3} (i\gw_k-\mc{G}_1)z, \ldots ,  (i\gw_k-\mc{G}_1)^{n_k-2}z  \right)^T,
}
i.e. $\vec{z}^k = \left( (\vec{z}^k)_{n_k},\ldots,(\vec{z}^k)_1 \right)^T$ 
with components $(\vec{z}^k)_{n_k}= (-1)^{n_k-1}z_1$ and
\eq{
(\vec{z}^k)_{l} = (-1)^{l-1} (i\gw_k - \mc{G}_1)^{n_k-1-l} z
}
for $l=\List{n_k-1}$.  
Choose the operator $\tilde{F} \in \Lin(W,Y)$
in such a way that 
\eq{
\tilde{F} = \left( \sum_{l=1}^{n_k-1} -\iprod{\cdot}{\phi_k^l} (\mathbb{P}(i\gw_k)K\vec{z}^k)_l  \right) + \iprod{\cdot}{\phi_k^{n_k}} ( (-1)^{n_k} y - (\mathbb{P}(i\gw_k)K\vec{z}^k)_{n_k})
}
where $(\mathbb{P}(i\gw_k)K\vec{z}^k)_{l}$ denotes a component of the $n_k$-dimensional vector $\mathbb{P}(i\gw_k)K\vec{z}^k = \left( (\mathbb{P}(i\gw_k)K\vec{z}^k)_{n_k},\ldots,(\mathbb{P}(i\gw_k)K\vec{z}^k)_1 \right)^T$. 
The operators $(A,B,C,D,\tilde{E},\tilde{F})$ satisfy parts (a) and (b) of Assumption~\ref{ass:pertclass}.

We have
\ieq{
\tilde{F}\Phi_k = -\mathbb{P}(i\gw_k)K\vec{z}^k + \left( (-1)^{n_k}y,0,\ldots,0 \right)^T.
}
Now
\eq{
\MoveEqLeft[1]\mc{G}_2 \left( \mathbb{P}(i\gw_k)K\vec{z}^k + C \mathbb{R}(i\gw_k,A)\tilde{E}\Phi_k + \tilde{F}\Phi_k \right)\\
&= \mc{G}_2 \left( \mathbb{P}(i\gw_k)K\vec{z}^k -\mathbb{P}(i\gw_k)K\vec{z}^k +  \left( (-1)^{n_k}y,0,\ldots,0 \right)^T  \right)
= \left( (-1)^{n_k}\mc{G}_2 y,0,\ldots,0 \right)^T.
}
On the other hand,
\eq{
\MoveEqLeft[1] \JG \vec{z}^k
 = 
\pmatsmall{i\gw_k- \mc{G}_1&I\\&i\gw_k-\mc{G}_1\\&&\ddots&I\\&&&i\gw_k-\mc{G}_1}
\pmat{(-1)^{n_k-1}z_1\\(-1)^{n_k-2}z\\\vdots\\ (i\gw_k-\mc{G}_1)^{n_k-2}z}\\[1ex]
&=\pmat{(-1)^{n_k-1}( (i\gw_k-\mc{G}_1) z_1-z )\\(-1)^{n_k-2}( (i\gw_k-\mc{G}_1)z-(i\gw_k-\mc{G}_1)z)\\\vdots\\(-1)( (i\gw_k-\mc{G}_1)^{n_k-2}z - (i\gw_k-\mc{G}_1)^{n_k-2}z ) \\ (i\gw_k-\mc{G}_1)^{n_k-1}z}
=\pmat{(-1)^{n_k}  \mc{G}_2y \\0\\\vdots \\ 0}
}
where we have used $(i\gw_k- \mc{G}_1)^{n_k-1}z=0$ and $(i\gw_k-\mc{G}_1)z_1-z=-\mc{G}_2y$.
Therefore $\vec{z}^k$ is a solution of the equation~\eqref{eq:RORPcharunique}. Using the fact that $\tilde{B}_eP_k =\tilde{B}_e$, we have from Lemma~\ref{lem:RORPcharunique} that the Sylvester equation $\Sigma S = \tilde{A}_e \Sigma + \tilde{B}_e$ has a solution $\Sigma\in \Lin(W,X_e)$. This concludes that $(A,B,C,D,\tilde{E},\tilde{F})\in \Ops$.

Since $\vec{z}^k$ is a solution of~\eqref{eq:IMSfreq1}, we have from Corollary~\ref{cor:IMSindfreq} that it also satisfies
\eq{
0 &= \mathbb{P}(i\gw_k)K \vec{z}^k + C\mathbb{R}(i\gw_k,A)\tilde{E}\Phi_k + \tilde{F}\Phi_k 
= \tilde{F}\Phi_k \\
&= \mathbb{P}(i\gw_k)K \vec{z}^k - \mathbb{P}(i\gw_k)K \vec{z}^k + ( (-1)^{n_k} y,0,\ldots,0)^T
=  ( (-1)^{n_k} y,0,\ldots,0)^T\in Y^{n_k},
}
which implies $y=0$, and we therefore have $z = (i\gw_k-\mc{G}_1)z_1 \in \ran(i\gw_k-\mc{G}_1)$. Since $z\in \ker(i\gw_k- \mc{G}_1)^{n_k-1}$ was arbitrary, this concludes the proof.
\end{proof}

Finally, Lemma~\ref{lem:GctoRob} proves that the \Gconds\ are sufficient for the robustness of the controller.

\begin{lemma}\label{lem:GctoRob}
Assume that the controller\/ $(\mc{G}_1,\mc{G}_2,K)$ satisfies the \Gconds, the closed-loop system is strongly stable, and the Sylvester equation $\Sigma S = A_e\Sigma + B_e$ has a solution $\Sigma \in \Lin(W_\ga,X_e)$.
Then the controller solves the robust output regulation problem on $W_\ga$.
\end{lemma}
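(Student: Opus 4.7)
The plan is to verify both parts (b) (output regulation for the nominal data) and (c) (robustness against all admissible perturbations) of the robust output regulation problem by the same argument, applied once to the nominal data $(A,B,C,D,E,F)$ and once to every $(\tilde A,\tilde B,\tilde C,\tilde D,\tilde E,\tilde F)\in\Ops$. In both cases the closed-loop operator is strongly stable and a solution of the corresponding Sylvester equation exists---by hypothesis in the nominal case and by Assumption~\ref{ass:pertclass}(c) in the perturbed case---so Theorem~\ref{thm:ORP} reduces each task to verifying the regulation constraint $C_e\Sigma+D_e=0$ on $W_{\ga+1}$ (or its perturbed counterpart).

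To obtain this identity, I would apply Lemma~\ref{lem:Sylsplit} to decompose the Sylvester equation frequency by frequency. For every $k\in\Z$ the lemma yields $\JG\,\Gamma\Phi_k=\mc{G}_2\vec{y}^k$ together with $(C_e\Sigma+D_e)\Phi_k=\vec{y}^k$, where $\vec{y}^k=\mathbb{P}(i\gw_k)K\Gamma\Phi_k+C\mathbb{R}(i\gw_k,A)E\Phi_k+F\Phi_k$ (and analogously with tildes in the perturbed case). Hence it suffices to prove $\vec{y}^k=0$ for every $k$.

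The heart of the argument, and the step I expect to be the main obstacle, is the inductive back-substitution through this block-triangular system. Writing $\Gamma\Phi_k=(z_{n_k},\ldots,z_1)^T$ and $\vec{y}^k=(y_{n_k},\ldots,y_1)^T$, the identity reads $(i\gw_k-\mc{G}_1)z_1=\mc{G}_2 y_1$ and $(i\gw_k-\mc{G}_1)z_l+z_{l-1}=\mc{G}_2 y_l$ for $l=2,\ldots,n_k$. I would prove by induction on $l$ that $y_l=0$ and $z_l\in\ker(i\gw_k-\mc{G}_1)^l$. The base case invokes only~\eqref{eq:Gconds1} and~\eqref{eq:Gconds2}: $\mc{G}_2 y_1=(i\gw_k-\mc{G}_1)z_1\in\ran(i\gw_k-\mc{G}_1)\cap\ran(\mc{G}_2)=\set{0}$, hence $y_1=0$ and $z_1\in\ker(i\gw_k-\mc{G}_1)$. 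The inductive step is where~\eqref{eq:Gconds3} is essential: since $l-1\leq n_k-1$, one has $z_{l-1}\in\ker(i\gw_k-\mc{G}_1)^{l-1}\subset\ker(i\gw_k-\mc{G}_1)^{n_k-1}\subset\ran(i\gw_k-\mc{G}_1)$, so $\mc{G}_2 y_l=(i\gw_k-\mc{G}_1)z_l+z_{l-1}$ again lies in $\ran(i\gw_k-\mc{G}_1)\cap\ran(\mc{G}_2)=\set{0}$, forcing $y_l=0$ by~\eqref{eq:Gconds2}, and $(i\gw_k-\mc{G}_1)z_l=-z_{l-1}$ then places $z_l$ in $\ker(i\gw_k-\mc{G}_1)^l$. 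This is the only place where all three $\mc{G}$-conditions are genuinely used, and~\eqref{eq:Gconds3} is exactly what is needed to make the induction propagate past $l=1$.

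Finally, from $(C_e\Sigma+D_e)\Phi_k=0$ for every $k\in\Z$ I would extend to $C_e\Sigma+D_e=0$ on all of $W_{\ga+1}$ using $C_e\Sigma+D_e\in\Lin(W_{\ga+1},Y)$ from Lemma~\ref{lem:CSigmarelbdd} together with the density of the finite linear combinations of $\set{\phi_k^l}_{kl}$ in $W_{\ga+1}$. Applying this conclusion to the nominal data yields part (b), and applying it to every $(\tilde A,\ldots,\tilde F)\in\Ops$ yields part (c), solving the robust output regulation problem on $W_\ga$.
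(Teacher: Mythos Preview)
Your proposal is correct and follows essentially the same route as the paper: the inductive back-substitution through the block-triangular system $\JG\vec{z}^k=\mc{G}_2\vec{y}^k$, using~\eqref{eq:Gconds1}--\eqref{eq:Gconds2} at each step and~\eqref{eq:Gconds3} to propagate $z_{l-1}\in\ran(i\gw_k-\mc{G}_1)$, is exactly the paper's argument. The only cosmetic difference is that the paper packages the conclusion via Corollary~\ref{cor:IMSindfreq} (showing the unique solution of~\eqref{eq:IMSfreq1} satisfies~\eqref{eq:IMSfreq2}), whereas you invoke Lemma~\ref{lem:Sylsplit} and Theorem~\ref{thm:ORP} directly; these amount to the same thing since $\vec{z}^k=\Gamma\Phi_k$.
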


\begin{proof} 
  In this proof we will show that for all perturbations in $\Ops$ and for all $k\in\Z$ the unique solution $\vec{z}^k$ of~\eqref{eq:IMSfreq1} satisfies~\eqref{eq:IMSfreq2}. Since this will in particular be true for the operators $(A,B,C,D,E,F)$ of the unperturbed plant, the results in Section~\ref{sec:RedIMs} conclude that the solution $\Sigma$ of the Sylvester equation $\Sigma S = A_e\Sigma + B_e$ satisfies $C_e \Sigma+D_e=0$ on $W_{\ga+1}$. Therefore, by Theorem~\ref{thm:ORP} the controller solves the robust output regulation problem.
  Moreover, since the unique solutions of~\eqref{eq:IMSfreq1} satisfy~\eqref{eq:IMSfreq2} also for all other perturbations in $\Ops$, Corollary~\ref{cor:IMSindfreq} will conclude that the controller is robust with respect to all perturbations in $\Ops$, and thus solves the robust output regulation problem on $W_\ga$.

  Let $(\tilde{A},\tilde{B},\tilde{C},\tilde{D},\tilde{E},\tilde{F})\in \Ops$. 
Fix $k\in\Z$ and let $\vec{z}^k\in \Dom(\mc{G}_1)^{n_k}$ be the unique solution of~\eqref{eq:IMSfreq1}, i.e.,
  \eqn{
  \label{eq:GctoRob}
  \pmatsmall{i\gw_k- \mc{G}_1&I\\&i\gw_k-\mc{G}_1\\&&\ddots&I\\&&&i\gw_k-\mc{G}_1}\vec{z}^k
  = \mc{G}_2
  \left( \mathbb{P}(i\gw_k)K \vec{z}^k + C\mathbb{R}(i\gw_k,A)\tilde{E}\Phi_k + \tilde{F}\Phi_k \right) .
  \hspace{-2ex}
  }
  For brevity denote $\vec{y} = (y_{n_k},\ldots,y_1)^T=\mathbb{P}(i\gw_k)K \vec{z}^k + C\mathbb{R}(i\gw_k,A)\tilde{E}\Phi_k + \tilde{F}\Phi_k $ and $\vec{z}^k = (z_{n_k}^k,\ldots,z_1^k)$. The bottom line of equation~\eqref{eq:GctoRob} is 
  \ieq{
  (i\gw_k - \mc{G}_1)z_1^k = \mc{G}_2 y_1.
  } 
Now conditions~\eqref{eq:Gconds1} and~\eqref{eq:Gconds2} imply that $(i\gw_k-\mc{G}_1)z_1^k=0$ and $y_1=0$.

If $n_k\geq 2$,
the condition~\eqref{eq:Gconds3} implies
\eq{
z_1^k\in\ker(i\gw_k-\mc{G}_1) \subset\ker(i\gw_k-\mc{G}_1)^{n_k-1} \subset\ran(i\gw_k-\mc{G}_1).
}
The second last line of~\eqref{eq:GctoRob} is
\ieq{
(i\gw_k - \mc{G}_1)z_2^k + z_1^k = \mc{G}_2 y_2.
} 
Since $z_1^k\in\ran(i\gw_k-\mc{G}_1)$, conditions~\eqref{eq:Gconds1} and~\eqref{eq:Gconds2} imply  
\ieq{
(i\gw_k-\mc{G}_1)z_2^k + z_1^k = 0
}
and $y_2=0$.
In particular this also shows that $z_2^k\in \ker( (i\gw_k-\mc{G}_1)^2)$ since $(i\gw_k-\mc{G}_1)^2 z_2^k = - (i\gw_k - \mc{G}_1)z_1^k = 0$.

By repeating the previous step as many times as necessary we can show that $y_l=0$ and
\eq{
z_l^k\in\ker(i\gw_k-\mc{G}_1)^l \subset\ker(i\gw_k-\mc{G}_1)^{n_k-1} \subset\ran(i\gw_k-\mc{G}_1)
}
for all $l\in\List{n_k-1}$. Finally, the top line of the equation~\eqref{eq:GctoRob} is equal to
\ieq{
(i\gw_k - \mc{G}_1)z_{n_k}^k + z_{n_k-1}^k = \mc{G}_2 y_{n_k}.
} 
Since $z_{n_k-1}^k\in\ran(i\gw_k-\mc{G}_1)$, conditions~\eqref{eq:Gconds1} and~\eqref{eq:Gconds2} imply  
\ieq{
(i\gw_k-\mc{G}_1)z_{n_k}^k + z_{n_k-1}^k = 0
}
and $y_{n_k}=0$.
We have now concluded that
\eq{
0=\vec{y} =\mathbb{P}(i\gw_k)K \vec{z}^k + C\mathbb{R}(i\gw_k,A)\tilde{E}\Phi_k + \tilde{F}\Phi_k ,
}
and thus we have shown that the unique solution $\vec{z}^k$ of~\eqref{eq:IMSfreq1} satisfies equations~\eqref{eq:IMSfreq2}. 

As stated in the beginning of the proof, the fact that $(\tilde{A},\tilde{B},\tilde{C},\tilde{D},\tilde{E},\tilde{F})\in \Ops$ and $k\in\Z$ were arbitrary allows us to conclude that the controller solves the robust output regulation problem on $W_\ga$.
\end{proof}

\section{Regular Linear Systems}
\label{sec:regsys}

In this section we show that Assumption~\ref{ass:CLsysass} is in particular satisfied if the plant and the controller are regular linear systems~\cite{Wei94,CurWei97,Sta05book}.
The operator $B$ is said to be an \keyterm{admissible input operator} (with respect to the semigroup $T(t)$ generated by $A$) if for some $\tau>0$  (and consequently for all $\tau>0$) and $u\in \Lp[2](0,\tau;U)$~\cite[Sec. 4.2]{TucWei09}
\eq{
\int_0^\tau T_{-1}(\tau-s)Bu(s)ds\in X.
}
Moreover, the operator $C$ is called an \keyterm{admissible output operator} if for one/all $\tau>0$ there exists $c_\tau>0$ such that
\eq{
\int_0^\tau \norm{CT(s)x }^2 ds \leq c_\tau\norm{x}^2 \qquad \forall x\in \Dom(A).
} 
The admissibility of the output operator $K\in \Lin(Z_1,U)$ of the controller with respect to the semigroup generated by $\mc{G}_1$ is defined analogously.
For admissible operators $C$ and $K$, we can define their \keyterm{$\Lambda$-extensions} by~\cite{Wei94,CurWei97}
\eq{
C_\Lambda x = \lim_{\gl\to\infty} \gl C(\gl -A)\inv x 
\qquad \mbox{and} \qquad
K_\Lambda z = \lim_{\gl\to\infty} \gl K(\gl -\mc{G}_1)\inv z,
}
with domains $\Dom(C_\Lambda)$ and $\Dom(K_{\Lambda})$ consisting of those elements $x\in X$ and $z\in Z$, respectively, for which the limits exist.
In the system equations (as well as elsewhere in the paper), the admissible operators $C$ and $K$ can be replaced without loss of generality with their $\Lambda$-extensions $C_\Lambda$ and $K_\Lambda$.

The plant $(A,B,C,D)$ with admissible input and output operators is said to be a \keyterm{regular linear system} if $\ran(R(\gl,A_{-1})B) \subset \Dom(C_\Lambda )$ for one/all $\gl\in \rho(A)$ (which is one of our standing assumptions made in Section~\ref{sec:plantexo}), and if $\gl\mapsto\norm{P(\gl)}$ is uniformly bounded on some right half-plane $\C_{\gb}^+$~\cite[Prop. 2.1]{CurWei97}.

If the operator $K$ is an admissible output operator with respect to the semigroup generated by $\mc{G}_1$ and the operator $\mc{G}_2$ is bounded, then also the controller $(\mc{G}_1,\mc{G}_2,K)$ is a regular linear system (due to~\cite[Thm. 4.3.7]{TucWei09}).

\begin{theorem}
  \label{thm:regsys}
  If both the plant $(A,B,C,D)$ and the controller $(\mc{G}_1,\mc{G}_2,K)$ with $\mc{G}_2\in \Lin(Y,Z)$ are regular linear systems, then Assumption~\textup{\ref{ass:CLsysass}} is satisfied. 
\end{theorem}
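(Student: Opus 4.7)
The plan is to deduce Assumption~\ref{ass:CLsysass} from the standard theory of feedback interconnections of regular linear systems~\cite{Wei94,CurWei97,Sta05book}. Since $\mc{G}_2\in\Lin(Y,Z)$ is bounded and $K$ is admissible with respect to the semigroup generated by $\mc{G}_1$, the controller $(\mc{G}_1,\mc{G}_2,K,0)$ is itself a regular linear system with zero feedthrough. The closed-loop system then arises as the standard feedback interconnection of the two regular linear systems $(A,B,C,D)$ and $(\mc{G}_1,\mc{G}_2,K,0)$ through the coupling $u=K_\Lambda z$ and $e=C_\Lambda x+DK_\Lambda z$. The abstract well-posedness condition for such a feedback loop reduces to the invertibility in $\Lin(U)$ of $I-D_c D$, where $D_c=0$ is the controller's feedthrough, and hence is trivially satisfied. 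By the results of~\cite{Wei94,CurWei97,Sta05book}, the closed-loop is therefore itself a well-posed regular linear system on $X_e=X\times Z$, with semigroup generator $\tilde{A}_e$ generating a $C_0$-semigroup on $X_e$ and with an admissible output operator $\tilde{C}_e$.

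The key remaining step is to identify $\tilde{A}_e$ with the operator $A_e$ defined in Section~\ref{sec:plantexo} on its maximal domain. The closed-form formulas for the feedback generator give
\[
\tilde{A}_e\pmat{x\\z}
= \pmat{A_{-1}x+BK_\Lambda z\\ \mc{G}_1 z+\mc{G}_2(C_\Lambda x+DK_\Lambda z)},
\]
with domain consisting of those $(x,z)^T\in\Dom(C_\Lambda)\times\Dom(K_\Lambda)$ for which the right-hand side lies in $X\times Z$. Because $\mc{G}_2\in\Lin(Y,Z)$, the second component automatically belongs to $Z$ precisely when $\mc{G}_1 z\in Z$, which forces $z\in\Dom(\mc{G}_1)\subset\Dom(K_\Lambda)$ and reduces $K_\Lambda z$ to $Kz$; the membership condition on the first component then becomes $A_{-1}x+BKz\in X$ with $x\in\Dom(C_\Lambda)$. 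Under the convention of Section~\ref{sec:regsys} that $C$ and $K$ are tacitly replaced by their $\Lambda$-extensions, this coincides with $\Dom(A_e)$ as defined in Section~\ref{sec:plantexo}, so $\tilde{A}_e=A_e$ and the first part of Assumption~\ref{ass:CLsysass} follows. The $A_e$-boundedness of $C_e=(C_\Lambda,\,DK_\Lambda)$ is then immediate from the admissibility of $\tilde{C}_e$, since an admissible output operator of a well-posed system is always bounded from the graph-normed domain of its generator into the output space.

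I expect the main obstacle to lie in this domain-identification step: the abstract feedback formulas from the regular-systems literature are naturally phrased using $\Lambda$-extensions and resolvent identities, and one must verify carefully that they reduce to the concrete maximal-domain description of Section~\ref{sec:plantexo}. In particular, the boundedness of $\mc{G}_2$ is the precise structural feature that collapses the second-component membership condition to the simple requirement $z\in\Dom(\mc{G}_1)$, and one must check that $K_\Lambda$ may then be replaced by $K$ on this domain without any change in the operator.
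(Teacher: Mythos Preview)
Your approach is correct and matches the paper's: both verify Assumption~\ref{ass:CLsysass} via the Weiss--Curtain feedback theory for regular systems and then identify the resulting generator with $A_e$ on its maximal domain, exactly along the lines you sketch (boundedness of $\mc{G}_2$ forces $z\in\Dom(\mc{G}_1)$ and collapses $K_\Lambda$ to $K$). The only cosmetic difference is that the paper first assembles the composite open-loop system $(\hat A,\hat B,\hat C,\hat D)$ explicitly and checks admissibility of the static feedback $\hat K=I$ by a hands-on resolvent estimate on $(I-\hat P(\lambda))^{-1}$ (using $\norm{P_{\mc G}(\lambda)}\to 0$ from the boundedness of $\mc{G}_2$), whereas you invoke the cleaner feedthrough criterion $I-D_cD=I$ for regular systems directly.
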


\begin{proof}
  The plant (without the disturbance signal $w(t)$)
  and the controller can be written together as a composite open loop system
  \eq{
  \ddb{t} \pmat{x\\z} &= \pmat{A&0\\0&\mc{G}_1} \pmat{x\\z} + \pmat{0&B\\\mc{G}_2&0} \pmat{e\\u}\\
  \pmat{y\\u}&= \pmat{C_\Lambda&0\\0&K_\Lambda} \pmat{x\\z} + \pmat{0&D\\0&0} \pmat{e\\u}.
  }
  Denote $\hat{x}= (x,z)^T$, $\hat{y}=(y,u)^T$, $\hat{u}=(e,u)^T$,
  \eq{
  \hat{A} = \pmat{A&0\\0&\mc{G}_1}, \quad \hat{B}= \pmat{0&B\\\mc{G}_2&0}, 
  \quad \hat{C}_\Lambda= \pmat{C_\Lambda&0\\0&K_\Lambda} , \quad \hat{D}= \pmat{0&D\\0&0} .
  }
We will show that $(\hat{A},\hat{B},\hat{C},\hat{D})$ is regular a regular linear system on $X_e = X\times Z$. The operator $\hat{A}$ generates a strongly continuous semigroup on $X_e$, and it is immediate that $\hat{B}$ and $\hat{C}$ are admissible with respect to $\hat{A}$. We have $\ran(R(\gl,\hat{A}_{-1})\hat{B})\subset \Dom(\hat{C})$ for all $\gl\in \rho(\hat{A}) = \rho(A)\cap \rho(\mc{G}_1)$, and 
the transfer function of $(\hat{A},\hat{B},\hat{C},\hat{D})$ is given by
  \eq{
  \hat{P}(\gl) &= \hat{C}_\Lambda R(\gl,\hat{A}_{-1})\hat{B} + \hat{D}\\
  &= \pmat{C_\Lambda&0\\0&K_\Lambda} \pmat{R(\gl,A_{-1})&0\\0&R(\gl,\mc{G}_{1,-1})}  \pmat{0&B\\\mc{G}_2&0}+ \pmat{0&D\\0&0} \\
  &= \pmat{0&C_\Lambda R(\gl,A_{-1})B + D \\K_\Lambda R(\gl,\mc{G}_1) \mc{G}_2& 0 }
 =\pmat{0&P(\gl)\\ P_{\mc{G}}(\gl)& 0 } .
  }
  Since $(A,B,C,D)$ and $(\mc{G}_1,\mc{G}_2,K)$ are regular linear systems, the mapping $\gl\mapsto\norm{\hat{P}(\gl)}$ is bounded on some half-plane $\C_{\hat{\gb}}^+$. This concludes that $(\hat{A},\hat{B},\hat{C},\hat{D})$ is a regular linear system~\cite[Prop. 2.1]{CurWei97}.

  We will show that the the operators $A_e$ and $C_e$ are the system operator and the output operator, respectively, of a linear system that is obtained from the open loop system $(\hat{A},\hat{B},\hat{C},\hat{D})$ by applying a static output feedback $\hat{u}=\hat{K} \hat{y} + \tilde{u}$ with $\hat{K} = \bigl( {I \atop 0} ~ {0\atop I} \bigr)$. Once we show that $\hat{K}$ is an admissible feedback operator for $(\hat{A},\hat{B},\hat{C},\hat{D})$, the theory in~\cite{Wei94,CurWei97} concludes that the closed-loop system resulting from the static output feedback is regular as well. This will in particular imply that $A_e$ generates a strongly continuous semigroup on $X_e$ and that $C_e$ is relatively bounded with respect to $A_e$.

  We begin by showing that $\hat{K}$ is an admissible feedback operator for $(\hat{A},\hat{B},\hat{C},\hat{D})$. To do this, we need to show that on some right half-plane of $\C$ the inverses $(I-\hat{P}(\gl)\hat{K})\inv$ exist and are uniformly bounded. This is achieved if we can find $\gb'\in \R$ and  $0<\gg<1$ so that $\norm{\hat{P}(\gl)\hat{K}}\leq \gg<1$ for all $\gl$ in the half-plane $\C_{\gb'}^+$. 

  Since $\mc{G}_2$ is bounded and $K$ is admissible, by~\cite[Thm. 4.3.7]{TucWei09} there exist $\gw\in\R$ and $\tilde{M}>0$ such that for every $\gl\in\C$ with $\re\gl>\gw$ we have
  \ieq{
  \norm{P_{\mc{G}}(\gl)}=\norm{KR(\gl,\mc{G}_1)\mc{G}_2}
  \leq \tilde{M}\norm{\mc{G}_2}/\sqrt{\re\gl - \gw},
  }
  and thus $\norm{P_{\mc{G}}(\gl)}\rightarrow 0$ as $\re\gl\rightarrow \infty$.
Since $(A,B,C,D)$ is regular, $P(\cdot)$ is uniformly bounded on some right half-plane of $\C$. We can therefore choose $\gb'>\gw$ in such a way that $P(\cdot)$ and $P_{\mc{G}}(\cdot)$ are uniformly bounded on $\C_{\gb'}^+$ and $\norm{P_{\mc{G}}(\gl)}\norm{P(\gl)}\leq \gg<1$ for every $\gl\in\C_{\gb'}^+$. We then have that $(I-P_{\mc{G}}(\gl)P(\gl))\inv$ exists and $\norm{(I-P_{\mc{G}}(\gl)P(\gl))\inv}\leq 1/(1-\gg)$ for all $\gl\in \C_{\gb'}^+$.
Furthermore, for every $\gl\in \C_{\gb'}^+$ we have
\eq{
(I -  \hat{P}(\gl) \hat{K} )\inv
  = \pmat{I&P(\gl)\\0&I}\pmat{I&0\\0&(I-P_{\mc{G}}(\gl)P(\gl))\inv} \pmat{I&0\\P_{\mc{G}}(\gl)&I},
}
and
\eq{
\norm{(I -  \hat{P}(\gl) \hat{K} )\inv}
&\leq (2+\norm{P(\gl)})\max\set{1,\norm{(I-P_{\mc{G}}(\gl)P(\gl))\inv}} (2+\norm{P_{\mc{G}}(\gl)} )\\
&\leq \frac{1}{1-\gg}(2+\norm{P(\gl)}) (2+\norm{P_{\mc{G}}(\gl)} ),
}
which is uniformly bounded on $\C_{\gb'}^+$. This concludes that $\hat{K}$ is an admissible feedback operator for $(\hat{A},\hat{B},\hat{C},\hat{D})$.

By~\cite{Wei94}, \cite[Sec. II]{CurWei97} the closed-loop system $(\hat{A}^K,\hat{B}^K,\hat{C}^K,\hat{D}^K)$ obtained with output feedback $\hat{u}=\hat{K} \hat{y}+ \tilde{u}$ is a regular linear system. The operators $\hat{A}^K$ and $\hat{C}_\Lambda^K$ can be expressed using the operator
  \eq{
  (I - \hat{D} \hat{K})\inv 
  = \pmat{I&-D\\0&I}\inv
  = \pmat{I&D\\0&I}.
  }
  The generator $\hat{A}^K$ is given by a formula~\cite[Sec. II]{CurWei97}
  \eq{
  \hat{A}^K \hat{x}
  &= (\hat{A} + \hat{B}\hat{K}(I-\hat{D}\hat{K})\inv \hat{C}_\Lambda)\hat{x}\\
  &= \pmat{A&0\\0&\mc{G}_1}\pmat{x\\z}
  + \pmat{0&B\\\mc{G}_2&0}\pmat{I&D\\0&I}\pmat{C_\Lambda&0\\0&K_\Lambda}\pmat{x\\z}\\
  &= \pmat{A&BK_\Lambda\\\mc{G}_2 C_\Lambda &\mc{G}_1+ \mc{G}_2DK_\Lambda}\pmat{x\\z}
  }
  with domain
  \eq{
  \Dom(\hat{A}^K)
  &= \Setm{\pmatsmall{x\\z}\in \Dom(C_\Lambda)\times \Dom(K_\Lambda)}{ (\hat{A} + \hat{B}\hat{K}(I-\hat{D}\hat{K})\inv \hat{C}_\Lambda)\pmatsmall{x\\z} \in X\times Z}\\
  &= \Setm{\pmatsmall{x\\z}\in \Dom(C_\Lambda)\times \Dom(\mc{G}_1)}{ Ax + BK_\Lambda z \in X}.
  }
  This shows that $\hat{A}^K$ coincides with $A_e$ in Section~\ref{sec:plantexo}. Moreover,
  \eq{
  \hat{C}^K \hat{x}
  =  (I-\hat{D}\hat{K})\inv \hat{C}_\Lambda\hat{x}
  =\pmat{I&D\\0&I}\pmat{C_\Lambda&0\\0&K_\Lambda}\pmat{x\\z}
  = \pmat{C_\Lambda & DK_\Lambda\\ 0 & K_\Lambda}\pmat{x\\z}
  }
  with domain $\Dom(\hat{C}^K) = \Dom(C_\Lambda)\times \Dom(K_\Lambda)$. Because of this, the first lines of $\hat{C}^K$ coincides with the operator $C_e$ in Section~\ref{sec:plantexo}.
  Because $(\hat{A}^K,\hat{B}^K,\hat{C}^K,\hat{D}^K)$ is a regular linear system, the operator $A_e$ generates a strongly continuous semigroup and $C_e$ is an admissible observation operator (with respect to the semigroup $T_e(t)$), and relatively bounded with respect to $A_e$.
\end{proof}

\begin{remark}
  \label{rem:regAedom}
  As in \textup{\cite[Sec. II]{CurWei97}}, the domain of $\hat{A}^K$ can also be expressed in the form
  \eq{
  \Dom(\hat{A}^K)
  &= \Setm{\pmatsmall{x\\z}\in \hat{X}^1}{ (\hat{A} + \hat{B}\hat{K}(I-\hat{D}\hat{K})\inv \hat{C}_\Lambda)\pmatsmall{x\\z} \in X\times Z}
  }
  where $\hat{X}^1 = \Dom(\hat{A}) + \ran( R(\mu,\hat{A}_{-1})\hat{B})$ for some $\mu\in \rho(\hat{A})$.
This together with a straightforward computation shows that
\eq{
\Dom(A_e) = \Setm{\pmatsmall{x\\z}\in X^1\times \Dom(\mc{G}_1)}{A_{-1}x+BKz\in X}
}
where $X^1 = \Dom(A) + \ran(R(\mu,A_{-1})B)$ for some $\mu\in \rho(A)$.
\end{remark}

\section{Robust Output Tracking for a Heat Equation}
\label{sec:heatex}

\newcommand{\z}{\xi}

In this section we consider robust output tracking for a stable one-dimensional heat equation with Neumann boundary control and point measurements. The system is given by
\eq{
\pd{x}{t}(\z,t) &= \pd[2]{x}{\z}(\z,t) - x(\z,t) \\
-\pd{x}{\z}(0,t)&=u_1(t), \quad
\pd{x}{\z}(1,t)=u_2(t)
}
with initial state $x(\z,0) = x_0(\z)$.
The temperature of the system is measured at two points
\eq{
y(t)=\pmat{x(1/\sqrt{8},t)\\x(1/ \sqrt{2},t)}.
} 
The plant can be written in the form~\eqref{eq:plantintro} if we choose $X = \Lp[2](0,1)$, $U=\C^2$, $Y=\C^2$, and
\eq{
(Ax)(\z) &= x''(\z) - x(\z),\\
 \Dom(A) &= \Setm{x\in X}{x,x' ~\mbox{abs. cont.} ~ x''\in \Lp[2](0,1), ~ x'(0)=x'(1)=0}.
}
The operator $A$ has a spectral representation~\cite[Ch. 2]{CurZwa95book}
\eq{
Ax &= \sum_{k=0}^\infty \gl_k \iprod{x}{\varphi_k}_{\Lp[2]}\varphi_k(\cdot)\\
x\in \Dom(A) &= \Bigl\{x\in X \; \Bigm| \;\sum_{k=0}^\infty \abs{\gl_k}^2 \abs{\iprod{x}{\varphi_k}_{\Lp[2]}}^2 <\infty\Bigr\},
}
where $\gl_k=-k^2\pi^2-1$, $\varphi_0(\z)\equiv 1$, and $\varphi_k(\z)= \sqrt{2} \cos(\pi k \z)$ for $k\in \N$. Thus the spectrum of $A$ satisfies $\gs(A)=\gs_p(A) = \set{\gl_k}_{k=0}^\infty$. 
The operator $A$ is boundedly invertible and generates an exponentially stable analytic semigroup on $X$. Since $\set{\varphi_k}$ is an orthonormal basis of $X$, we have $\norm{R(\gl,A)}= \min_k\abs{\gl-\gl_k}^{-1}$ for all $\gl\in \rho(A)$.
The space $X_{-1}$ is given by
\eq{
X_{-1} &= \Bigl\{ ~ \sum_{k=0}^\infty \iprod{x}{\varphi_k}\varphi_k \; \Bigm| \; \sum_{k=0}^\infty \frac{1}{\abs{\gl_k}^2} \abs{\iprod{x}{\varphi_k}}^2 <\infty \Bigr\}.
}
The operator $-A$ is positive and sectorial, and its fractional powers have representations
\eq{
(-A)^\gb x &= \sum_{k=0}^\infty (-\gl_k)^\gb \iprod{x}{\varphi_k}\varphi_k(\cdot)\\
x\in \Dom( (-A)^\gb) &= \Bigl\{\; \sum_{k=0}^\infty \iprod{x}{\varphi_k}\varphi_k \; \Bigm| \;\sum_{k=0}^\infty \abs{\gl_k}^{2\gb} \abs{\iprod{x}{\varphi_k}}^2 <\infty\Bigr\} 
}
for all $\gb\in\R$.

The boundary control can be written formally as $B \bigl( {u_1\atop u_2} \bigr) = b_1u_1+b_2u_2$ with $b_1(\z) = \gd(\z)$, and $b_2(\z) = \gd(\z-1)$ (where $\gd(\cdot)$ is the Dirac delta function). We have $b_1,b_2\in X_{-1}$.  
Similarly, the observation operator can be written as 
$C x = \bigl(\iprod{x}{c_1},\iprod{x}{c_2}\bigr)^T$
with $c_1(\z)=\gd(\z-1/\sqrt{8})$, $c_2(\z)=\gd(\z-1/\sqrt{2})$ and domain $\Dom(C)= \Setm{x\in X}{x(\cdot) ~ \mbox{is cont.}}$.
For any $\z_0\in[0,1]$ we have
\eq{
\MoveEqLeft[2] \sum_{k=0}^\infty \abs{\gl_k}^{2\cdot (-1/2)} \abs{\iprod{\gd(\cdot-\z_0)}{\varphi_k}}^2
= \sum_{k=0}^\infty \frac{\abs{\varphi_k(\z_0)}^2}{\abs{\gl_k}}
\leq 1+ \frac{2}{\pi^2}\sum_{k=1}^\infty \frac{1}{k^2}
=  \frac{4}{3}.
}
This shows that $b_1,b_2,c_1,c_2\in \Dom( (-A_{-1})^{-1/2})$, and that $\norm{(-A_{-1})^{-1/2}b_j}\leq \sqrt{4/3}$ and $\norm{(-A_{-1})^{-1/2}c_j}\leq \sqrt{4/3}$ for $j=1,2$, which further imply
$B\in \Lin(U,X_{-1})$, $C\in \Lin(X_1,Y)$, $\ran(R(\gl,A_{-1})B)\subset \Dom( (-A)^{1/2})\subset \Dom(C)$ and $P(\gl) = CR(\gl,A_{-1})B \in \Lin(U,Y)$ for all $\gl\in \rho(A)$.

We have $\iprod{b_1}{\varphi_0} = \varphi_0(0)=1$, $\iprod{b_2}{\varphi_0} = \varphi_0(1)=1$, and
$\iprod{b_1}{\varphi_k} = \varphi_k(0)=\sqrt{2}$, $\iprod{b_2}{\varphi_k} = \varphi_k(1)=\sqrt{2}\cos(\pi k) = \sqrt{2}(-1)^k$ for $k\in \N$.
Likewise, $\iprod{c_1}{\varphi_0} = \varphi_0(1/\sqrt{8})=1$, $\iprod{c_2}{\varphi_0} = \varphi_0(1/\sqrt{2})=1$, and $\iprod{c_1}{\varphi_k} = \varphi_k(1/\sqrt{8}) = \sqrt{2}\cos(\pi k/\sqrt{8})$, and $\iprod{c_2}{\varphi_k} = \varphi_k(1/\sqrt{2}) = \sqrt{2} \cos(\pi k/\sqrt{2})$ for $k\in\N$.
The transfer function of the plant has a series representation
\eq{
P(\gl)u 
&= \sum_{k = 0}^\infty \frac{1}{\gl-\gl_k} \iprod{Bu}{\varphi_k} C\varphi_k
= \sum_{k = 0}^\infty \frac{1}{\gl-\gl_k} \pmat{\iprod{\varphi_k}{c_1}\\\iprod{\varphi_k}{c_2}} (\iprod{b_1}{\varphi_k}u_1 +  \iprod{b_2}{\varphi_k}u_2 )\\
&=\frac{1}{\gl+1}\pmat{1&1\\1&1}u +  2\sum_{k=1}^\infty \frac{1}{\gl-\gl_k} \pmat{\cos(\pi k/\sqrt{8})\\\cos(\pi k/\sqrt{2})} \pmat{1, ~ (-1)^k}u,
}
and $\norm{P(\gl)}$ can be estimated as
\eqn{
\label{eq:Pnormest}
\norm{P(\gl)}
&\leq \frac{1}{\abs{\gl+1}} \Norm{\pmat{1&1\\1&1}} + 2 \sum_{k=1}^\infty \frac{1}{\abs{\gl-\gl_k}} \sqrt{2} \sqrt{2}
\leq  4 \sum_{k=0}^\infty \frac{1}{\abs{\gl-\gl_k}}.
}

\subsection{Robust Tracking of Constant Reference Signals}

In the first part of this example we consider a one-dimensional exosystem.
We choose its parameters as $W=\C$, $S = 0\in \C$, $E=0\in X$, $F=-\frac{1}{5}(1,3)^T\in \C^2$. Then for the initial state $v_0\in \C $ the reference signal generated by the exosystem is 
\eq{
\yref(t) = -Fe^{St}v_0 
= \frac{1}{5}\pmat{1\\3} v_0.
}

Our aim is to solve the robust output regulation problem using a 2-dimensional controller with an internal model.
We choose the parameters of the controller on $Z=\C^2$ in such a way that  $\mc{G}_1 = 0\in \C^{2\times 2}$, $\mc{G}_2 = \frac{1}{5} I$, and $K = -P(0)\inv $.
We will show that with these choices the closed-loop system operator $A_e$ generates an exponentially stable analytic semigroup. To this end, let $\gd=0.025$ and consider a sector
\eq{
\Sigma_\gd = \Setm{\gl \in \C}{\arg (\gl+\gd)> \frac{3\pi}{4}}.
}
We will show that outside this sector, i.e., on $\C\setminus \Sigma_\gd$, the resolvent operator $R(\gl,A_e)$ exists and satisfies $\abs{\gl + \gd} \norm{R(\gl,A_e)}\leq M$ for some constant $M>0$.

Let $\gl\in \rho(A)$, $(x,z)^T\in X_e$ and $(x_1,z_1)^T \in \Dom(A_e)$. A direct computation (on $X_{-1}\times Z$) shows that
\eq{
(\gl - A_e) \pmat{x_1\\z_1} = \pmat{x\\z}
\qquad \Leftrightarrow \qquad
&\left\{
\begin{array}{l}
  (\gl - A_{-1})x_1 - BKz_1 = x\\
  -\mc{G}_2 Cx_1 + (\gl - \mc{G}_1)z_1 - \mc{G}_2 DK z_1  = z
\end{array}
\right.\\
 \Leftrightarrow \qquad
&\left\{
\begin{array}{l}
  x_1 = R(\gl,A_{-1})BKz_1 + R(\gl,A)x\\
  (\gl - \mc{G}_1 - \mc{G}_2P(\gl)K)z_1   = \mc{G}_2 CR(\gl,A)x+ z.
\end{array}
\right. 
}
This shows that if $S_A(\gl)=\gl - \mc{G}_1 - \mc{G}_2 P(\gl)K$ (the Schur complement of $\gl - A$ in $\gl - A_e$) is boundedly invertible, then $\gl\in \rho(A_e)$ and 
\eq{
 R(\gl,A_e) \pmat{x\\z} 
 &=  \pmat{x_1\\z_1} 
 = \pmat{R(\gl,A_{-1})BKz_1 + R(\gl,A)x\\S_A(\gl)\inv(\mc{G}_2 CR(\gl,A)x+  z)}\\
 &= \pmat{R(\gl,A_{-1})BKS_A(\gl)\inv(\mc{G}_2 CR(\gl,A)x+  z) + R(\gl,A)x\\S_A(\gl)\inv(\mc{G}_2 CR(\gl,A)x+  z)}.
}
For all $\gl\in \rho(A)$ we have
\eq{
\MoveEqLeft[1]\norm{R(\gl,A_{-1})B}
=\norm{(-A)R(\gl,A)(-A_{-1})\inv B} 
\leq\norm{(\gl-A-\gl)R(\gl,A)}\norm{(-A_{-1})\inv B} \\
&=\norm{I-\gl R(\gl,A)}\norm{(-A_{-1})\inv B} 
\leq(1+\abs{\gl} \norm{R(\gl,A)})\norm{(-A_{-1})\inv B} .
}
The analyticity of the semigroup generated by $A$ thus implies that $\norm{R(\gl,A_{-1})B}$ is uniformly bounded outside the sector $\Sigma_\gd$. Analogously we can see that the same is true for $\norm{CR(\gl,A)}$. Because of this, the behaviour of $R(\gl,A_e)$ on $\C\setminus \Sigma_\gd$ is characterized by the behavior of $S_A(\gl)$.
As above, we have that if $\gl\in \rho(A)$ then 
\eq{
\MoveEqLeft[1] \norm{P(\gl)}
=\norm{CR(\gl,A_{-1})B}
=\norm{C(-A)^{-1/2}(\gl-A-\gl)R(\gl,A)(-A_{-1})^{-1/2} B} \\
&\leq(1+\abs{\gl} \norm{R(\gl,A)})\norm{C(-A)^{-1/2}}\norm{(-A_{-1})^{-1/2} B} ,
}
and thus $\norm{P(\gl)}$ is uniformly bounded on $\C\setminus \Sigma_\gd$. 

We have
\eq{
S_A(\gl)=\gl - \mc{G}_1 - \mc{G}_2 P(\gl)K
= \gl + \frac{1}{5} P(\gl)P(0)\inv
= \gl (I+  \frac{1}{5\gl} P(\gl)P(0)\inv),
}
and using estimate~\eqref{eq:Pnormest} we can see that for $0<q<1$ there exists $r>0$ such that
\eq{
\bigl\|\frac{1}{5 \gl} P(\gl)P(0)\inv\bigr\|
\leq\frac{4}{5 \abs{\gl}} \norm{P(0)\inv} \sum_{k=0}^\infty \frac{1}{\abs{\gl-\gl_k}}
\leq q<1 
}
whenever $\gl\in \C\setminus \Sigma_\gd$ satisfies $\abs{\gl}\geq r$. Straightforward estimates
can now be used to show that we can choose $r=4$. We then have
\eq{
\MoveEqLeft\sup_{\substack{\gl\in\C\setminus \Sigma_\gd\\ \abs{\gl}\geq 4}}\; \abs{\gl+\gd}\norm{S_A(\gl)\inv}
= \sup_{\substack{\gl\in\C\setminus \Sigma_\gd\\ \abs{\gl}\geq 4}}\; \frac{\abs{\gl+\gd}}{\abs{\gl}} \Bigl\|(I-\frac{1}{5\gl}P(\gl)P(0)\inv)\inv\Bigr\|\\
&\leq \sup_{\substack{\gl\in\C\setminus \Sigma_\gd\\ \abs{\gl}\geq 4}}\; \frac{\abs{\gl+\gd}}{\abs{\gl}} \frac{1}{1-q}<\infty.
} 
Moreover,
using the series representation for $P(\gl)$, we can 
numerically verify that the values $\gl$ with $\abs{\gl}\leq 4$ for which $S_A(\gl)$ is not invertible belong to the sector $\Sigma_\gd$. 
Therefore, for $\gl\in \C\setminus \Sigma_\gd$ with $\abs{\gl}\leq 4$ the norms $\norm{S_A(\gl)\inv}$ are uniformly bounded. Together these estimates conclude that there exists $M_1>0$ such that $\abs{\gl+\gd}\norm{S_A(\gl)\inv}\leq M_1$ for $\gl\in \C\setminus\Sigma_\gd$.
The above properties of the closed-loop system finally conclude that $\gs(A_e)\subset \Sigma_\gd$ and there exists $M>0$ such that 
\ieq{
\norm{R(\gl,A_e)}\leq \frac{M}{\abs{\gl+\gd}} 
}
for all $ \gl\in \C\setminus \Sigma_\gd$, $\gl\neq -\gd$. Thus the closed-loop system is analytic and exponentially stable.

It remains to verify that the operator $C_e$ is $A_e$-bounded. For any $x_e=(x,z)^T \in \Dom(A_e)\subset \Dom(C)\times \C$ we have
\eq{
 \norm{C_e x_e}^2 
= \norm{Cx + DKz}^2
= \norm{Cx}^2
\leq  \norm{A_{-1}x + BKz}^2 + 5^2 \bigl\|\frac{1}{5} Cx\bigr\|^2 
\leq
25  \norm{A_e x_e}^2.
}
This concludes that $C_e$ is relatively bounded with respect to $A_e$.

Since the closed-loop system is analytic and exponentially stable, and since $S=0$ generates a bounded group on $W=\C$, the Sylvester equation $\Sigma S = A_e \Sigma + B_e$ has a solution $\Sigma\in \Lin(W,X_e)$~\cite[Cor. 8]{Vu91}.

Since $i\gw_0=0$ and $\dim \ker(i\gw_0-\mc{G}_1)=\dim\C^2=2$, the controller $(\mc{G}_1,\mc{G}_2,K)$ incorporates a p-copy internal model of the exosystem. Theorem~\ref{thm:IMP} thus concludes that the controller solves the robust output regulation problem. More precisely, the controller achieves asymptotic tracking of constant reference signals, and this property is robust with respect to any perturbations that preserve the closed-loop stability and the solvability of the Sylvester equation. In particular, these include sufficiently small bounded perturbations of the operators $(A,B,C,D)$ --- under which the closed-loop system remains analytic and exponentially stable --- as well as arbitrary perturbations of the operators $E$ and $F$ (which do not affect the closed-loop system operator).

The behaviour of the closed-loop system was simulated on the time interval $[0,30]$ using a truncated eigenfunction expansion for $A$ with $N=31$ eigenfunctions $\varphi_k(\cdot)$. The initial states of the plant and the controller were chosen as $x_0(\z)=\frac{1}{4}\z^3-\frac{3}{8}\z^2-\frac{1}{4}$ (which satisfies $x_0\in \Dom(A)$) and $z_0=0\in \C^2$. Together the initial states satisfy $x_{e0}=(x_0,z_0)^T\in \Dom(A_e)$.
Figure~\ref{fig:heatex-sim-sol} describes the behaviour of the state $x(\z,t)$ of the controlled system, and the output $y(t)$ of the controlled system is depicted in Figure~\ref{fig:heatex-sim-output}.

\begin{figure}[ht]
\begin{minipage}[t]{0.48\linewidth}
  \begin{center}
      \includegraphics[width=0.9\linewidth]{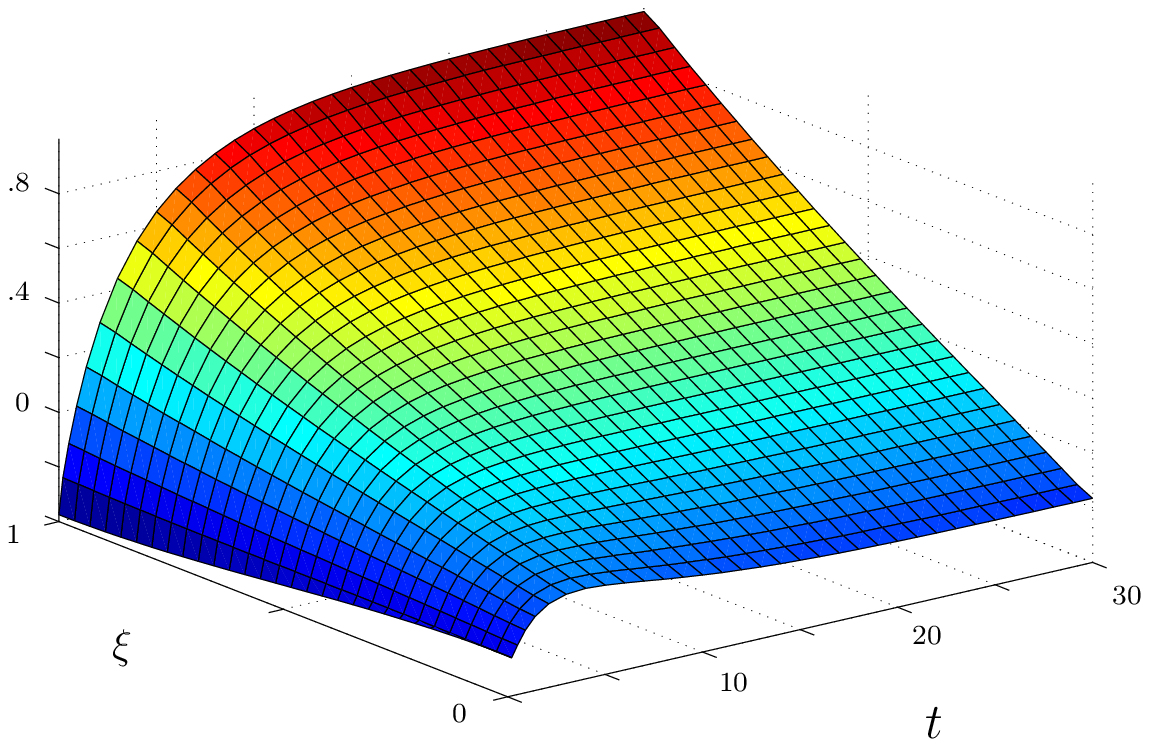}
  \end{center}
  \caption{State $x(\z,t)$ of the controlled system.}
  \label{fig:heatex-sim-sol}
\end{minipage}
\hfill
\begin{minipage}[t]{0.48\linewidth}
  \begin{center}
      \includegraphics[width=0.8\linewidth]{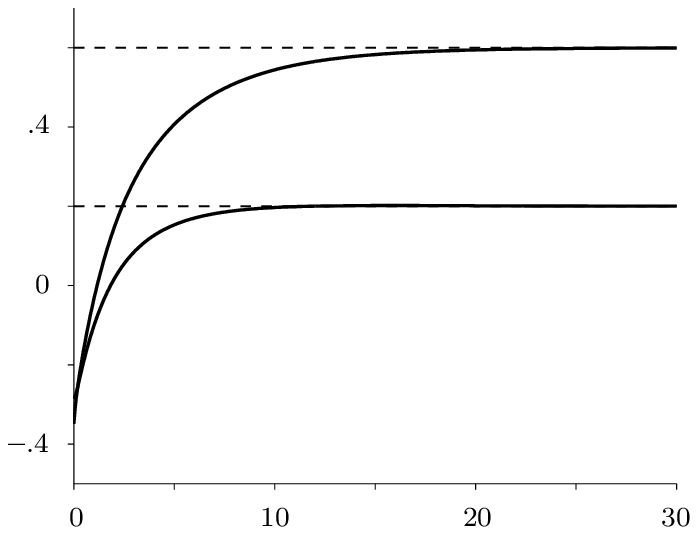}
  \end{center}
  \caption{Output $y(t)$ of the controlled system.}
  \label{fig:heatex-sim-output}
\end{minipage}
\end{figure}

\subsection{Robust Tracking of Continuous Periodic Signals}
\label{sec:infexoex}

We conclude the example by considering the tracking of continuous periodic signals. In particular our approach illustrates dividing the robust output regulation problem into two parts. In the first part we fix the structure of the controller in such a way that the controller incorporates an internal model of the exosystem. The second part of the problem consists of choosing the remaining parameters of the controller in such a way that the closed-loop system is strongly stable and the Sylvester equation $\Sigma S = A_e\Sigma + B_e$ has a solution. In this paper we have not considered techniques for stabilizing the closed-loop system, and therefore the question on how to choose these free parameters is left open.

We consider an infinite-dimensional exosystem on the space $W=\lp[2](\C)$. Choosing $\phi_k = e_k$, the natural basis of $\lp[2](\C)$, we define
\eq{
S = \sum_{k\in\Z} ik \iprod{\cdot}{\phi_k}\phi_k,
\qquad \Dom(S) = \Bigl\{v\in W ~\Bigm|~\sum_{k\in\Z} \, k^2 \abs{\iprod{v}{\phi_k}}^2<\infty\Bigr\}
}
and $F\in \Lin(W,\C)$ is chosen in such a way that $F\phi_0=1$ and $F\phi_k = 1/\abs{k}^{3/5}$ for all $k\neq 0$. 
For this exosystem the reference signals to be tracked are of the form
\eq{
\yref(t) &= FT_S(t) v_0 
= \sum_{k\in\Z} e^{ik t} \iprod{v_0}{\phi_k} F\phi_k 
= \iprod{v_0}{\phi_0} + \sum_{k\neq 0} e^{ik t} \frac{\iprod{v_0}{\phi_k}}{\abs{k}^{3/5}},
}
which are precisely the continuous $2\pi$-periodic signals $f$ with Fourier coefficients $\hat{f}_0 = \iprod{v_0}{\phi_0}$ and $\hat{f}_k=\iprod{v_0}{\phi_k} \abs{k}^{-3/5}$ for $k\neq 0$. As was shown in~\cite[Sec. 3]{PauPoh12a}, the choice of the space $W_\ga$ of the initial states $v_0\in W_\ga$ determines the smoothness properties of the generated signals $\yref(t)$.

We will now construct a controller that contains an internal model of the exosystem in the sense that $(\mc{G}_1,\mc{G}_2,K)$ satisfy the \Gconds\ in Section~\ref{sec:Gconds}. Since $\dim Y=2$, we must include two copies of $S$ in the controller. We let $Z_1$ be a Banach space, choose $Z_2=W\times W$, and $Z=Z_1\times Z_2$. The operators $\mc{G}_1$, $\mc{G}_2$, and $K$ of the controller are chosen to be of the form
  \eq{
  \mc{G}_1 = \pmat{R_1&R_2\\0& G_1}, \qquad \mc{G}_2 =  \pmat{R_3\\G_2}, 
  \qquad K = \bigl(K_1,  ~K_2\bigr).
  }
   The operators $G_1$ and $G_2$ are defined as
  \eq{
  G_1 = \pmat{S&0\\0&S}, \qquad
  G_2 = \pmat{g_1&0\\0&g_2},
  }
  where $\Dom(G_1) = \Dom(S)\times \Dom(S)$ and $g_1,g_2\in W$ are such that $\iprod{g_1}{\phi_k}\neq 0$ and $\iprod{g_2}{\phi_k}\neq 0$ for all $k\in \Z$. The operator $G_1$ contains the copies of the signal generator.
  The operators $R_1$, $R_2$, $R_3$, and $K$ in the controller can be used in stabilizing the closed-loop system. They should be chosen in such a way that $\mc{G}_1$ with a suitable domain generates a strongly continuous semigroup on $Z$, the closed-loop system operator $A_e$ generates a strongly stable semigroup, and the Sylvester equation $\Sigma S = A_e \Sigma + B_e$ has a solution $\Sigma \in \Lin(W_\ga,X_e)$ for some $\ga\geq 0$. The next lemma shows that the controller satisfies the \Gconds, and thus by Theorem~\ref{thm:GcondIMP} the robust output regulation problem is solved if the closed-loop system stability is achieved.

\begin{lemma} 
  \label{lem:compositeGconds}
  The controller $(\mc{G}_1,\mc{G}_2,K)$ satisfies the \Gconds.
\end{lemma}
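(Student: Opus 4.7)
The plan is to verify each of the three conditions in Definition~\ref{def:Gconds} separately, exploiting the block-triangular structure of $\mc{G}_1$, the block-diagonal structure of $G_2$, and the nondegeneracy assumption $\iprod{g_j}{\phi_k}\neq 0$ for all $k\in\Z$ and $j=1,2$. I would begin with the observation that since the basis of $W=\lp[2](\C)$ here is $\phi_k = e_k$, every ``Jordan block'' of $S$ has length $n_k=1$. Consequently $\ker(i\gw_k - \mc{G}_1)^{n_k-1}=\ker I=\set{0}$, so condition~\eqref{eq:Gconds3} holds trivially.

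For \eqref{eq:Gconds2}, if $y=(y_1,y_2)^T\in Y=\C^2$ satisfies $\mc{G}_2 y=0$, then reading off the lower block gives $G_2 y = (g_1 y_1, g_2 y_2)^T = 0$ in $W\times W$. The assumption $\iprod{g_j}{\phi_0}\neq 0$ ensures $g_j\neq 0$, so $y_1=y_2=0$.

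The substantive step is \eqref{eq:Gconds1}. Fix $k\in\Z$ and suppose $w\in \ran(ik-\mc{G}_1)\cap\ran(\mc{G}_2)$, i.e. $w=(ik-\mc{G}_1)z=\mc{G}_2 y$ for some $z=(z_1,z_2)^T\in\Dom(\mc{G}_1)$ with $z_2=(z_{21},z_{22})^T\in\Dom(G_1)=\Dom(S)\times\Dom(S)$, and some $y=(y_1,y_2)^T\in\C^2$. The lower block of the equation reduces to $(ik-G_1)z_2 = G_2 y$, i.e. $(ik-S)z_{2j}=g_j y_j$ for $j=1,2$. Since $S$ is diagonal and skew-adjoint on the basis $\set{\phi_n}_{n\in\Z}$ with $S\phi_k=ik\phi_k$, pairing both sides with $\phi_k$ annihilates the left-hand side and leaves $y_j\iprod{g_j}{\phi_k}=0$; the hypothesis $\iprod{g_j}{\phi_k}\neq 0$ then forces $y_j=0$. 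Hence $y=0$ and $w=\mc{G}_2 y=0$.

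The only mild obstacle is justifying the inner-product manipulation for unbounded $S$, which amounts to noting that $\phi_k\in\Dom(S^*)=\Dom(S)$ with $(ik-S)^*\phi_k = (-ik+S)\phi_k = 0$, so $\iprod{(ik-S)z_{2j}}{\phi_k} = \iprod{z_{2j}}{(ik-S)^*\phi_k}=0$ for $z_{2j}\in\Dom(S)$. Apart from this, the proof is pure bookkeeping on the block decomposition, and the key structural input is the choice of $g_1,g_2$ with all Fourier coefficients nonzero, which is precisely what allows each frequency $ik$ of the exosystem to be ``reached'' through $\mc{G}_2$.
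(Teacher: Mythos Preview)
Your proof is correct and follows essentially the same approach as the paper's: both verify the three \Gconds\ separately, exploit the block-triangular form of $\mc{G}_1$ to reduce condition~\eqref{eq:Gconds1} to the lower block $(ik-G_1)z_2=G_2y$, and then use the skew-adjointness of $S$ (equivalently of $G_1$) together with $\iprod{g_j}{\phi_k}\neq 0$ to force $y=0$. The only cosmetic differences are that the paper works with $G_1$ and the eigenvectors $\psi_k^l=(\phi_k,0)^T,(0,\phi_k)^T$ rather than componentwise with $S$, and that you explicitly justify the adjoint manipulation for unbounded $S$, which the paper leaves implicit.
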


\begin{proof}
  Let $y = (y_1,y_2)^T\in \ker(\mc{G}_2)$. Then $0= G_2 y = \bigl( {g_1 y_1 \atop g_2 y_2} \bigr)$. Since $g_1,g_2\neq 0$, we must have $y = (y_1,y_2)^T=0$. This concludes $\ker(\mc{G}_2)=\set{0}$.

  Let $k\in \Z$ and $z = (z_1,z_2)^T \in \ran(i\gw_k - \mc{G}_1)\cap \ran(\mc{G}_2)$. Then there exist $(z_1^1,z_2^1)^T \in \Dom(\mc{G}_1)$ and $y = (y_1,y_2)^T\in Y$ such that
  \eq{
  \pmat{z_1\\z_2} = \left[ i\gw_k -\pmat{R_1& R_2\\0& G_1} \right] \pmat{z_1^1\\z_2^1} = \pmat{R_3\\G_2}y.
  }
  Due to the structure of $\mc{G}_1$, we necessarily have $z_2^1\in \Dom(G_1) = \Dom(S)\times \Dom(S)$, and $z_2 = (i\gw_k - G_1)z_2^1 = G_2 y$. Denote $\psi_k^1 = (\phi_k,0)^T$ and $\psi_k^2 = (0,\phi_k)^T$. Then clearly $G_1 \psi_k^l = i\gw_k \psi_k^l$ for $l=1,2$. Since $G_1$ is skew-adjoint, we have 
  \eq{
  \iprod{z_2}{\psi_k^l} 
  =\iprod{(i\gw_k - G_1)z_2^1}{\psi_k^l} 
  =\iprod{z_2^1}{(-i\gw_k + G_1)\psi_k^l} 
  =\iprod{z_2^1}{(-i\gw_k + i\gw_k)\psi_k^l} =0
  }
for $l=1,2$ and, on the other hand,
  \eq{
  \iprod{z_2}{\psi_k^1} 
  = \Iprod{G_2y}{\left({\phi_k\atop 0}\right)}
  = y_1\iprod{g_1}{\phi_k}, 
  \qquad 
  \iprod{z_2}{\psi_k^2} 
  = \Iprod{G_2y}{\left({0\atop \phi_k}\right)}
  = y_2\iprod{g_2}{\phi_k}.
  }
  Combining these equations we have $y_l\iprod{g_l}{\phi_k}=0$ for $l=1,2$. Since $\iprod{g_1}{\phi_k}\neq 0$ and $\iprod{g_2}{\phi_k}\neq 0$ by assumption, we must have $y=(y_1,y_2)^T=0$. This concludes $z_2=\mc{G}_2 y = 0$, and further shows that~\eqref{eq:Gconds2} is satisfied.

  Since $n_k=1$ for all $k\in\Z$, we have $\ker(i\gw_k - \mc{G}_1)^{n_k-1} = \set{0}$ for all $k\in\Z$, and thus the condition~\eqref{eq:Gconds3} is trivially satisfied.
\end{proof}

\section{Conclusions}
\label{sec:conclusions}

In this paper we have studied the theory of robust output regulation for distributed parameter systems with unbounded input and output operators. In particular, we have extended the internal model principle for the p-copy internal model as well as for the \Gconds\ for this class of infinite-dimensional systems together with infinite-dimensional block diagonal exosystems. Due to the more general setting, it was not possible to repeat the earlier proofs of the internal model principle. Instead, the proofs presented in this paper make use of Theorem~\ref{thm:Robchareqns}, which also provides a way of testing the robustness of a controller with respect to specific perturbations $(\tilde{A},\tilde{B},\tilde{C},\tilde{D},\tilde{E},\tilde{F})\in \Ops$.

The most important topics for future research are the robust controller design and the stabilization of the closed-loop system. The techniques used previously in~\cite{HamPoh00,PauPoh12a} are not applicable without modifications in the case of unbounded control and observation operators in the plant.
In~\cite{PauPoh13b} it was shown that many of the technical assumptions related to the solvability of the Sylvester equations can be simplifed if the controller can achieve polynomial closed-loop stability. Therefore, designing controllers for polynomial stabilization of the closed-loop system is an important research problem.

\subsection*{Acknowledgement}

The authors would like to thank Professor Hans Zwart for helpful advice on regular linear systems, and especially for proposing the approach for verifying Assumption~\ref{ass:CLsysass}.

\appendix

\section{Properties of the Exosystem and the Proof of \\Lemma~\ref{lem:Sylsplit}}
\label{sec:AppA}

\begin{lemma}
  \label{lem:exonondecay}
  Let $\tilde{X}$ be a normed linear space and let $\ga\geq 0$. The infinite-dimensional exosystem has the property that if $Q\in \Lin(W_\ga, \tilde{X})$, then
  \eqn{
  \label{eq:exonondecay}
  QT_S(t) v_0\stackrel{t\rightarrow \infty}{\longrightarrow} 0 \qquad \mbox{for all} \quad v_0\in W_\ga
  }
 if only if $Q=0$.
\end{lemma}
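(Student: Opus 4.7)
The ``if'' direction is immediate since $T_S(t)0=0$, so the plan is to establish the converse. Fix $Q\in\Lin(W_\ga,\tilde{X})$ satisfying~\eqref{eq:exonondecay} and aim to show $Qv=0$ for every $v\in W_\ga$. The strategy is to test the decay hypothesis on the basis vectors $\phi_k^l$, exploit the explicit formula for $T_S(t)$ to conclude $Q\phi_k^l=0$ by induction on $l$, and then close the argument with a density step.

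First I will note that each basis vector $\phi_k^l$ actually lies in $W_\ga$: since $P_m\phi_k^l=0$ for $m\neq k$ and $P_k\phi_k^l=\phi_k^l$, one has $\norm{\phi_k^l}_\ga^2=(1+\gw_k^2)^\ga<\infty$. Thus the hypothesis can be applied to any $\phi_k^l$. Fixing $k\in\Z$, I will induct on $l\in\List{n_k}$, using the explicit action
\eq{
T_S(t)\phi_k^l = e^{i\gw_kt}\sum_{j=1}^l\frac{t^{l-j}}{(l-j)!}\phi_k^j .
}
For $l=1$ this gives $QT_S(t)\phi_k^1=e^{i\gw_kt}Q\phi_k^1$, whose norm is constant in $t$; the convergence~\eqref{eq:exonondecay} therefore forces $Q\phi_k^1=0$. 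For the induction step, assuming $Q\phi_k^j=0$ for every $j<l$, all lower-order terms drop out and $\norm{QT_S(t)\phi_k^l}=\norm{Q\phi_k^l}$ is again constant, hence zero. Since $k\in\Z$ was arbitrary, $Q$ vanishes on the whole orthonormal set $\Set{\phi_k^l}_{kl}$.

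To pass from the basis to all of $W_\ga$, I will use the density of finite linear combinations of $\Set{\phi_k^l}_{kl}$ in $W_\ga$: for every $v\in W_\ga$ the partial sums $v_N=\sum_{\abs{k}\leq N}P_kv$ are finite linear combinations of basis vectors, and by definition of the norm on $W_\ga$ one has
\eq{
\norm{v-v_N}_\ga^2 = \sum_{\abs{k}>N}(1+\gw_k^2)^\ga\norm{P_kv}^2 \xrightarrow{N\to\infty} 0 .
}
Boundedness of $Q$ on $W_\ga$ then yields $Qv=\lim_N Qv_N=0$, concluding the proof.

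The argument is essentially routine once the explicit formula for $T_S(t)$ is used; the only mildly delicate point is the inductive cancellation, which exploits the fact that the polynomial-in-$t$ prefactors forced by the Jordan structure are precisely what isolate $Q\phi_k^l$ at each stage. No serious obstacle is anticipated.
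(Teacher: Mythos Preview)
Your proof is correct and follows essentially the same approach as the paper: reduce to showing $Q\phi_k^l=0$ via the explicit formula for $T_S(t)$, then extend by density in $W_\ga$. The only minor variation is that you isolate $Q\phi_k^l$ by induction on $l$ (cancelling lower-order terms at each step), whereas the paper applies $Q$ to a general $v_0\in P_kW$, observes that $e^{i\gw_kt}$ times a $\tilde{X}$-valued polynomial in $t$ can tend to zero only if all coefficients vanish, and then evaluates at $t=0$; both arguments are equally elementary. (One trivial slip: the ``if'' direction is immediate because $Q=0$ makes $QT_S(t)v_0=0$, not because $T_S(t)0=0$.)
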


\begin{proof}
  It is clearly sufficient to show that the property~\eqref{eq:exonondecay} implies $Q=0$. To this end, assume~\eqref{eq:exonondecay} is satisfied, and let $k\in\Z$ and $v_0\in P_kW=\Span\set{\phi_k^l}_{l=1}^{n_k}$.
  Now
  \begin{subequations}
  \label{eq:suffcondJordanblockfun}
  \eqn{
  QT_S(t)v_0 &= e^{i\gw_k t}\sum_{l=1}^{n_k} \iprod{v_0}{\phi_k^l} \sum_{j=1}^l \frac{t^{l-j}}{(l-j)!} Q\phi_k^j\\[1ex]
  &=
  e^{i\gw_k t} \sum_{j=0}^{n_k-1} t^j \cdot \frac{1}{j!} \sum_{l=j+1}^{n_k} \iprod{v_0}{\phi_k^l}Q\phi_k^{l-j}
  }
\end{subequations}
  Since $QT_S(t)v_0\rightarrow 0$, it is easy to see that we must have
  \ieq{
  \sum_{l=j+1}^{n_k} \iprod{v_0}{\phi_k^l}Q\phi_k^{l-j}=0 
  }
  for all $j\in \List[0]{n_k-1}$.
  However, by~\eqref{eq:suffcondJordanblockfun} this also implies $QT_S(t_0)v_0=0$ for all $t_0\geq 0$, and in particular $Qv_0 = QT_S(0)v_0=0$. Since $k\in\Z$ and $v_0\in P_kW$ were arbitrary, this shows that $Q\phi_k^l = 0$ for all $k\in\Z$ and $l\in \List{n_k}$. Since $\set{\phi_k^l}_{kl}$ is a basis of $W_\ga$, this concludes $Q=0$.
\end{proof}

\textit{Proof of Lemma~\textup{\ref{lem:Sylsplit}}.}
Assume that $(\tilde{A},\tilde{B},\tilde{C},\tilde{D},\tilde{E},\tilde{F})$ satisfy parts (a) and (b) of Assumption~\ref{ass:pertclass} and let $k\in\Z$. 
  
  We begin by showing that (a) implies (b).
  Let $\Sigma = (\Pi,\Gamma)^T \in \Lin(W_\ga,X_e)$ be such that $\ran(\Sigma P_k)\subset \Dom(\tilde{A}_e)$ and $\Sigma P_k S = \tilde{A}_e \Sigma P_k +\tilde{B}_eP_k$. We have $\ran(\Sigma P_k)\subset \Dom(\tilde{A}_e)\subset \Dom(\tilde{C})\times \Dom(\mc{G}_1)$, which implies
  $\Pi \phi_k^l\in \Dom(\tilde{C})$, $\Gamma\phi_k^l\in \Dom(\mc{G}_1)$ for every $l\in \List{n_k}$.
  For all $l\in \List[2]{n_k}$ we have and (using $S\phi_k^1 = i\gw_k \phi_k^1$ and $S\phi_k^l = i\gw_k \phi_k^l + \phi_k^{l-1}$) 
\begin{subequations}
  \label{eq:SylsplitSylexpand}
  \eqn{
  \pmat{\tilde{E}\phi_k^1\\\mc{G}_2 \tilde{F}\phi_k^1} 
  &= B_e \phi_k^1
  = \Sigma S \phi_k^1 - \tilde{A}_e \Sigma \phi_k^1 
  = (i\gw_k - \tilde{A}_e)\Sigma \phi_k^1 \\
  &=\pmat{ (i\gw_k- \tilde{A}_{-1})\Pi \phi_k^1 - \tilde{B}K\Gamma \phi_k^1 \\ (i\gw_k -  \mc{G}_1)\Gamma \phi_k^1 - \mc{G}_2(\tilde{C}\Pi+\tilde{D}K\Gamma) \phi_k^1}\\
  \pmat{\tilde{E}\phi_k^l \\\mc{G}_2 \tilde{F}\phi_k^l} 
  &= B_e \phi_k^l
  = \Sigma S \phi_k^l - \tilde{A}_e \Sigma \phi_k^l 
  = (i\gw_k - \tilde{A}_e)\Sigma \phi_k^l + \Sigma \phi_k^{l-1}\\
  &=\pmat{ (i\gw_k- \tilde{A}_{-1})\Pi \phi_k^l - \tilde{B}K\Gamma \phi_k^l + \Pi \phi_k^{l-1} \\ (i\gw_k -  \mc{G}_1)\Gamma \phi_k^l - \mc{G}_2(\tilde{C}\Pi+\tilde{D}K\Gamma) \phi_k^l + \Gamma \phi_k^{l-1} }.
  }
\end{subequations}
We have $i\gw_k\in \rho(\tilde{A}) = \rho(\tilde{A}_{-1})$, and we denote $\tilde{R}_k = R(i\gw_k,\tilde{A}_{-1})$ for brevity.
The first lines of the equations~\eqref{eq:SylsplitSylexpand} recursively imply that for $l\in \List[2]{n_k}$ we have
 \eq{
 \Pi \phi_k^1 &= \tilde{R}_k \left( \tilde{B}K\Gamma \phi_k^1 + \tilde{E}\phi_k^1 \right)\\
 \Pi \phi_k^l
 &= \tilde{R}_k \left( \tilde{B}K\Gamma \phi_k^l + \tilde{E}\phi_k^l  - \Pi \phi_k^{l-1}\right) \\
 &= \tilde{R}_k \left( \tilde{B}K\Gamma \phi_k^l + \tilde{E}\phi_k^l \right) - \tilde{R}_k^2 \left( \tilde{B}K\Gamma \phi_k^{l-1} + \tilde{E}\phi_k^{l-1}  -  \Pi \phi_k^{l-2} \right)  \\
 &= \cdots 
 = \sum_{j=0}^{l-1} (-1)^j \tilde{R}_k^{j+1} \left( \tilde{B} K\Gamma \phi_k^{l-j} + \tilde{E}\phi_k^{l-j} \right).
 }
 In vector notation this is precisely~\eqref{eq:Sylsplit2}.  
 Substituting $\Pi\phi_k^l$ into the second lines of the equations~\eqref{eq:SylsplitSylexpand} we see that for all $l\in \List[2]{n_k}$ we have
  \eq{
  \MoveEqLeft[2] (i\gw_k -  \mc{G}_1)\Gamma \phi_k^1 =    \mc{G}_2(\tilde{C}\Pi \phi_k^1 +\tilde{D}K\Gamma\phi_k^1 +  \tilde{F}\phi_k^1 ) \\
  = \,& \mc{G}_2\left[  (\tilde{C}\tilde{R}_k\tilde{B}+\tilde{D}) K\Gamma \phi_k^1 +  \tilde{C} \tilde{R}_k\tilde{E}\phi_k^1 + \tilde{F}\phi_k^1 \right] \\
  = \,&\mc{G}_2\left(  \tilde{P}(i\gw_k) K\Gamma \phi_k^1 +  \tilde{C} \tilde{R}_k\tilde{E}\phi_k^1 + \tilde{F}\phi_k^1 \right) \\
  \MoveEqLeft[2] (i\gw_k -  \mc{G}_1)\Gamma \phi_k^l + \Gamma \phi_k^{l-1} =     \mc{G}_2(\tilde{C}\Pi \phi_k^l +\tilde{D}K\Gamma\phi_k^l +  \tilde{F}\phi_k^l ) \\
  = \,& \mc{G}_2\left[  \sum_{j=0}^{l-1} (-1)^j \tilde{C}\tilde{R}_k^{j+1} \left( \tilde{B} K\Gamma \phi_k^{l-j} + \tilde{E}\phi_k^{l-j} \right) +\tilde{D}K\Gamma\phi_k^l +  \tilde{F}\phi_k^l\right]  \\
  = \,& \mc{G}_2\left[  \tilde{P}(i\gw_k) K\Gamma \phi_k^l + \sum_{j=1}^{l-1} (-1)^j \tilde{C}\tilde{R}_k^{j+1}  \tilde{B} K\Gamma \phi_k^{l-j} 
    + \sum_{j=0}^{l-1} (-1)^j \tilde{C}\tilde{R}_k^{j+1} \tilde{E}\phi_k^{l-j}  +  \tilde{F}\phi_k^l\right] . 
  }
  In vector notation this is exactly~\eqref{eq:Sylsplit1}.
  This concludes that (b) is satisfied.

  We will now show that (b) implies (a).
  To this end, assume $\Sigma = (\Pi,\Gamma)^T \in \Lin(W_\ga,X_e)$ is such that $\ran(\Sigma P_k) \subset \Dom(\tilde{C})\times \Dom(\mc{G}_1)$ and~\eqref{eq:Sylsplit} are satisfied.
For all $l\in \List{n_k}$ we have $\Pi\phi_k^l \in \Dom(\tilde{C})$ and $\Gamma \phi_k^l\in \Dom(\mc{G}_1)$, and as above we can see that the equations~\eqref{eq:Sylsplit} imply
  \eq{
  \MoveEqLeft \Sigma S  \phi_k^1 - \tilde{A}_e \Sigma \phi_k^1 
  = i\gw_k\Sigma  \phi_k^1 - \tilde{A}_e \Sigma \phi_k^1 \\
  &=\pmat{ (i\gw_k- \tilde{A}_{-1})\Pi \phi_k^1 - \tilde{B}K\Gamma \phi_k^1\\ (i\gw_k -  \mc{G}_1)\Gamma \phi_k^1 - \mc{G}_2(\tilde{C}\Pi+\tilde{D}K\Gamma) \phi_k^1}
  = \pmat{\tilde{E}\phi_k^1\\\mc{G}_2 \tilde{F}\phi_k^1} 
  = \tilde{B}_e \phi_k^1\\
   \MoveEqLeft \Sigma S \phi_k^l - \tilde{A}_e \Sigma \phi_k^l 
   = (i\gw_k   - \tilde{A}_e )\Sigma \phi_k^l + \Sigma \phi_k^{l-1}\\
   &   =\pmat{ (i\gw_k- \tilde{A}_{-1})\Pi \phi_k^l - \tilde{B}K\Gamma \phi_k^l + \Pi \phi_k^{l-1}\\ (i\gw_k -  \mc{G}_1)\Gamma \phi_k^l - \mc{G}_2(\tilde{C}\Pi+\tilde{D}K\Gamma) \phi_k^l + \Gamma \phi_k^{l-1}}
   = \pmat{\tilde{E}\phi_k^l\\\mc{G}_2 \tilde{F}\phi_k^l} 
   = \tilde{B}_e \phi_k^l.
 }
Since $\tilde{B}_e\phi_k^l\in X_e$ and $\Sigma \phi_k^l\in X_e$, the second and fourth line above also show that $(\Pi \phi_k^l,\Gamma \phi_k^l)^T\in \Dom(i\gw_k - \tilde{A}_e) =\Dom(\tilde{A}_e)$ for all $l\in \List{n_k}$, and thus $\ran(\Sigma P_k)\subset  \Dom(\tilde{A}_e)$.
 This concludes that $\Sigma P_k$ is a solution of the Sylvester equation $\Sigma_k S = \tilde{A}_e \Sigma_k + \tilde{B}_eP_k$, and thus (a) is satisfied.

 If $\Sigma = (\Pi,\Gamma)^T$ satisfies~\eqref{eq:Sylsplit}, then~\eqref{eq:Sylsplit2} implies 
  \eq{
  \MoveEqLeft[2] \tilde{C_e} \Sigma \phi_k^1 + \hspace{-.2ex} \tilde{D}_e \phi_k^1
  =  \tilde{C}\Pi \phi_k^1 \hspace{-.2ex} + \hspace{-.2ex} \tilde{D} K \Gamma \phi_k^1 \hspace{-.2ex} + \hspace{-.2ex} \tilde{F}\phi_k^1
  = \tilde{C}\tilde{R}_k\bigl( \tilde{B} K\Gamma \phi_k^1 + \tilde{E}\phi_k^1  \bigr) \hspace{-.2ex}+ \hspace{-.2ex}\tilde{D} K \Gamma \phi_k^1 \hspace{-.2ex} + \hspace{-.2ex} \tilde{F}\phi_k^1\\
  &=  \bigl( \tilde{C}\tilde{R}_k \tilde{B}  + \tilde{D}  \bigr)K \Gamma \phi_k^1 +    \tilde{C} \tilde{R}_k \tilde{E}\phi_k^1 +\tilde{F}\phi_k^1 
  =  \tilde{P}(i\gw_k)K \Gamma\phi_k^1 +  \tilde{C} \tilde{R}_k \tilde{E}\phi_k^1 +\tilde{F}\phi_k^1 \\
\MoveEqLeft[2]  \tilde{C_e} \Sigma \phi_k^l + \tilde{D}_e \phi_k^l
  =  \tilde{C}\Pi \phi_k^l + \tilde{D} K \Gamma \phi_k^l + \tilde{F}\phi_k^l\\
  &=  \sum_{j=0}^{l-1} (-1)^j \tilde{C}\tilde{R}_k^{j+1} \left( \tilde{B} K\Gamma \phi_k^{l-j} + \tilde{E}\phi_k^{l-j} \right) + \tilde{D} K \Gamma \phi_k^l + \tilde{F}\phi_k^l\\
&=  \tilde{P}(i\gw_k) K\Gamma \phi_k^l + \sum_{j=1}^{l-1} (-1)^j \tilde{C}\tilde{R}_k^{j+1}  \tilde{B} K\Gamma \phi_k^{l-j} 
+ \sum_{j=0}^{l-1} (-1)^j \tilde{C}\tilde{R}_k^{j+1} \tilde{E}\phi_k^{l-j}  + \tilde{F}\phi_k^l.
  }
  In vector notation, this is exactly~\eqref{eq:Sylsplit3}.  

It remains to show that (c) and (d) are equivalent. Assume that $(\tilde{A},\tilde{B},\tilde{C},\tilde{D},\tilde{E},\tilde{F})\in\Ops$. If (c) is satisfied, we have from Theorem~\ref{thm:Sylprop} that for every $k\in\Z$ the operator $\Sigma P_k$ is a solution of the Sylvester equation $\Sigma_k S = \tilde{A}_e \Sigma_k + \tilde{B}_e P_k$. Therefore, part (d) follows directly from the fact that (a) implies (b).

Assume now that (d) is satisfied, i.e., the operator $\Sigma: \Dom(\Sigma)\subset W \rightarrow X_e$ is such that $\ran(P_k) \subset \Dom(\Sigma)$, $\ran(\Sigma P_k)\subset \Dom(\tilde{C})\times \Dom(\mc{G}_1)$,  and~\eqref{eq:Sylsplit} are satisfied for all $k\in\Z$. 
Since $(\tilde{A},\tilde{B},\tilde{C},\tilde{D},\tilde{E},\tilde{F})\in\Ops$, we have from Assumption~\ref{ass:pertclass} that there exists $\tilde{\Sigma}\in \Lin(W_\ga,X_e)$ such that $\tilde{\Sigma}(W_{\ga+1})\subset \Dom(\tilde{A}_e)$ and  $\tilde{\Sigma} S  = \tilde{A}_e \tilde{\Sigma} + \tilde{B}_e$. 
  We will show that $\Sigma = \tilde{\Sigma}$.

  For $k\in\Z$ the equivalence of (a) and (b) implies that $\Sigma P_k$ is a solution of the Sylvester equation $\Sigma_k S = \tilde{A}_e \Sigma_k + \tilde{B}_e P_k$.
  However, by Theorem~\ref{thm:Sylprop} these equations have unique solutions $\tilde{\Sigma}P_k$, and thus we must have $\Sigma P_k = \tilde{\Sigma }P_k$ for all $k\in\Z$.
  This in particular implies that $\Sigma v = \tilde{\Sigma} v$ for all $v$ in the space
  \eq{
  W_\infty = \Bigl\{ \sum_{\abs{k}\leq N} \sum_{l=1}^{n_k} v_{kl} \phi_k^l \; \Bigm| \; N\in \N, ~ v_{kl}\in\C \Bigr\}.
  }
  This space satisfies $W_\infty\subset W_\ga$, and for all
  $v\in W_\infty$ the property $\Sigma v = \tilde{\Sigma} v$ implies
  \eq{
  \norm{\Sigma v} = \norm{\tilde{\Sigma} v} \leq \norm{\tilde{\Sigma}}_{\Lin(W_{\ga},X_e)} \norm{v}_\ga.
  }
  The space $W_\infty$ is dense in $W_\ga$. Therefore
$\Sigma$ has a unique extension in $\Lin(W_\ga,X_e)$, and this extension is equal to $\tilde{\Sigma}$.
This finally concludes that $\Sigma$ (or its extension) satisfies $\Sigma (W_{\ga+1})\subset \Dom(\tilde{A}_e)$ and it is a solution of the Sylvester equation $\Sigma S = \tilde{A}_e \Sigma + \tilde{B}_e $.
 $\Box$

\end{document}